\documentclass{amsart}

\usepackage[T1]{fontenc}
\usepackage[utf8]{inputenc}

\usepackage[colorlinks,citecolor=blue,urlcolor=blue]{hyperref}

\usepackage[backend=bibtex,doi=true,isbn=false,url=false,
            style=alphabetic,maxbibnames=99,maxcitenames=99,
            maxalphanames=99]{biblatex}
\usepackage{amsthm,amsmath,amsfonts,amssymb}
\usepackage{microtype}
\usepackage{xspace}
\usepackage{mathtools}
\usepackage[config, labelfont={normalsize}]{caption,subfig}
\usepackage[backgroundcolor=yellow]{todonotes}
\usetikzlibrary{decorations.pathreplacing}
\usetikzlibrary{decorations.pathmorphing}
\usetikzlibrary{decorations.markings}
\usetikzlibrary{colorbrewer}
\usetikzlibrary{arrows.meta}
\usetikzlibrary{patterns}
\usepackage{subfiles}

\colorlet{colA}{Paired-B}
\colorlet{colB}{Paired-F}
\colorlet{colC}{Paired-D}
\colorlet{colD}{Paired-J}

\colorlet{colGhostA}{Paired-A}
\colorlet{colGhostB}{Paired-E}
\colorlet{colGhostC}{Paired-C}
\colorlet{colGhostD}{Paired-I}

\definecolor{colGhost}{HTML}{666666}

\colorlet{colOrange}{Paired-H}
\colorlet{colGhostOrange}{Paired-G}
\colorlet{colBrown}{Paired-L}

\colorlet{colBC}{colOrange}
\colorlet{colP}{colD}
\colorlet{colHeir}{colD}

\colorlet{colGone}{colGhostD}
\colorlet{colGtwo}{colGhost}

\colorlet{colMatrixHeir}{YlGn-A}
\colorlet{colMatrixPlus}{Oranges-D}
\colorlet{colMatrixMinus}{Blues-G}

\tikzset{
  particleA/.style={colA, line width=2pt, solid},
  particleB/.style={colB, line width=0.8pt, double, double distance=2pt},
  particleC/.style={colC, line width=1.5pt, decorate,
    decoration={zigzag, segment length=4pt, amplitude=1pt}},
  particleD/.style={colD, line width=2pt, solid},
  ghostA/.style={colGhostA, line width=1.5pt, dashed},
  ghostB/.style={colGhostB, line width=1.5pt, dotted},
  ghostC/.style={colGhostC, line width=1.5pt, densely dashed},
  styleGhost/.style={colGhost, line width=1pt, densely dotted},
  particleAg/.style={colA, line width=2pt, dashed},
  styleHeir/.style={colHeir, line width=2.5pt, solid,
    postaction={decorate, decoration={markings,
      mark=between positions 2mm and 1 step 5mm with {\draw[-] (0,-3pt) -- (0,3pt);}}}},
}

\makeatletter
\@ifclasswith{amsart}{externalize}%
{
  \ifdefined\ExtendedVersion
    \newcommand{\tikzexternaldisable}{}
    \newcommand{\tikzexternalenable}{}
  \else
    \usetikzlibrary{external}
    \tikzexternalize[
      optimize=false,
      prefix=tikz/,
      mode=list and make]
    \makeatletter
    \renewcommand{\todo}[2][]{\tikzexternaldisable\@todo[#1]{#2}\tikzexternalenable}
    \makeatother
  \fi
}%
{
  \newcommand{\tikzexternaldisable}{}
  \newcommand{\tikzexternalenable}{}
}
\makeatother

\usepackage{aliascnt}
\usepackage{enumitem}
\usepackage[capitalize,noabbrev]{cleveref}

\numberwithin{equation}{section}

\DeclareMathOperator{\id}{id}

\theoremstyle{definition}
\newtheorem{definition}{Definition}[section]

\theoremstyle{plain}
\newaliascnt{theorem}{definition}
\newtheorem{theorem}[theorem]{Theorem}
\aliascntresetthe{theorem}

\newaliascnt{proposition}{definition}
\newtheorem{proposition}[proposition]{Proposition}
\aliascntresetthe{proposition}

\newaliascnt{lemma}{definition}
\newtheorem{lemma}[lemma]{Lemma}
\aliascntresetthe{lemma}

\newaliascnt{corollary}{definition}
\newtheorem{corollary}[corollary]{Corollary}
\aliascntresetthe{corollary}

\theoremstyle{remark}
\newaliascnt{example}{definition}
\newtheorem{example}[example]{Example}
\aliascntresetthe{example}

\newaliascnt{remark}{definition}
\newtheorem{remark}[remark]{Remark}
\aliascntresetthe{remark}

\newaliascnt{principle}{definition}

\aliascntresetthe{principle}

\crefname{theorem}{Theorem}{Theorems}
\Crefname{theorem}{Theorem}{Theorems}
\crefname{lemma}{Lemma}{Lemmas}
\Crefname{lemma}{Lemma}{Lemmas}
\crefname{corollary}{Corollary}{Corollaries}
\Crefname{corollary}{Corollary}{Corollaries}
\crefname{proposition}{Proposition}{Propositions}
\Crefname{proposition}{Proposition}{Propositions}
\crefname{definition}{Definition}{Definitions}
\Crefname{definition}{Definition}{Definitions}
\crefname{example}{Example}{Examples}
\Crefname{example}{Example}{Examples}
\crefname{remark}{Remark}{Remarks}
\Crefname{remark}{Remark}{Remarks}
\crefname{principle}{Principle}{Principles}
\Crefname{principle}{Principle}{Principles}

\usepackage{needspace}
\let\origsubsection\subsection
\renewcommand{\subsection}{\needspace{4\baselineskip}\origsubsection}
\let\origsubsubsection\subsubsection
\renewcommand{\subsubsection}{\needspace{3\baselineskip}\origsubsubsection}

\newcommand{\NN}{\mathbb{N}}
\newcommand{\ZZ}{\mathbb{Z}}
\newcommand{\RR}{\mathbb{R}}
\newcommand{\PP}{\mathbb{P}}

\DeclareMathOperator{\sgn}{sgn}
\newcommand{\gasgn}{\sgn_{\varepsilon}}
\newcommand{\gasgnof}[1]{\sgn_{\varepsilon}(#1)}
\DeclareMathOperator{\heir}{heir}

\newcommand{\ilo}{\mathrel{\triangleleft}}

\newcommand{\iloeps}{\mathrel{\triangleleft_{\varepsilon}}}
\newcommand{\igo}{\mathrel{\triangleright}}

\newcommand{\Actors}{\mathcal{A}}
\newcommand{\Roles}{\mathcal{R}}
\newcommand{\Ghosts}{\mathcal{G}}
\newcommand{\Heirs}{\mathcal{H}}        %
\newcommand{\Active}{\operatorname{Active}}  %
\newcommand{\Junctions}{\mathcal{J}}
\newcommand{\FinalState}{\mathcal{F}}

\newcommand{\casting}{\mathcal{C}}      %
\newcommand{\perf}{\mathcal{P}}         %
\newcommand{\paths}{\mathbf{P}}
\newcommand{\CastingSpace}{\mathbf{C}}  %

\newcommand{\Attribution}{\mathfrak{A}}
\newcommand{\Rehearsal}{\mathfrak{R}}

\newcommand{\Candidates}{\Pi_{\FinalState}}

\newcommand{\urban}{the second-named author\xspace}

\newif\ifshowleanmarkers
\showleanmarkersfalse

\providecommand{\uses}[1]{}

\newif\ifextendedversion
\ifdefined\ExtendedVersion\extendedversiontrue\else\extendedversionfalse\fi

\pgfkeys{/fnote/.cd, prose/.initial={}, lean/.initial={}, py/.initial={}}
\newcommand{\fnote}[1]{%
  \ifextendedversion
    \begingroup
    \pgfkeys{/fnote/.cd,#1}%
    \pgfkeysgetvalue{/fnote/prose}{\fnoteprose}%
    \pgfkeysgetvalue{/fnote/lean}{\fnotelean}%
    \pgfkeysgetvalue{/fnote/py}{\fnotepy}%
    \todo[color=green!10, inline=false]{%
      \textbf{Formalization}%
      \ifx\fnoteprose\empty\else\par \fnoteprose\fi
      \ifx\fnotelean\empty\else\par Lean:
        \texttt{\expandafter\detokenize\expandafter{\fnotelean}}\fi
      \ifx\fnotepy\empty\else\par Py:
        \texttt{\expandafter\detokenize\expandafter{\fnotepy}}\fi
    }%
    \endgroup
  \fi}

\ifextendedversion
  \title[Coalescing particle systems (extended)]%
  {Exact determinant formulas \\ for coalescing particle systems \\[0.35em]
   \normalsize Extended version}
\else
  \title[Coalescing particle systems]%
  {Exact determinant formulas \\ for coalescing particle systems}
\fi
\author{Piotr \'Sniady}
\address[P.~\'Sniady]{Institute of Mathematics,
  Polish Academy of Sciences,
  ul.~\'Sniadeckich 8, \mbox{00-656~Warszawa,} Poland}
\email[P.~\'Sniady]{psniady@impan.pl}

\author{\'Akos Urb\'an}
\address[\'A.~Urb\'an]{Department of Stochastics,
  Budapest University of Technology and Economics,
  Budapest, Hungary}
\address[\'A.~Urb\'an]{HUN-REN Alfr\'ed R\'enyi Institute of
  Mathematics, Re\'altanoda utca 13--15, \mbox{1053~Budapest,} Hungary}
\email[\'A.~Urb\'an]{urbana@math.bme.hu}
\email[\'A.~Urb\'an]{urban.akos@renyi.hu}

\thanks{A 12-page extended abstract of this paper will be submitted to
  the proceedings of the conference \emph{Formal Power Series and
  Algebraic Combinatorics} (FPSAC)~\cite{SU2026fpsac}.}

\begin{document}

\begin{abstract}
When particles on a line collide, they may coalesce
into one. Such systems arise in the voter model, where
boundaries between opinion clusters perform coalescing
random walks, and in reaction-diffusion theory, where
diffusing particles merge on contact. Computing exact
coalescence probabilities has been difficult because
collisions reduce the particle count, while classical
determinantal methods require a fixed number of
particles throughout. We introduce
\emph{ghost particles}: when two particles collide,
one survivor continues as usual and one invisible
ghost is created alongside it, preserving the total
count.
This restores the square matrix structure needed
for a determinantal formula. We prove that the
probability of any specified coalescence
pattern---which initial particles merge into which
survivors---is given by a determinant whose entries are
transition probabilities. Integrating out ghost
positions yields a closed-form formula for the
surviving particles alone: the \emph{coalescence
determinant}. The only assumptions are the Markov
property and nearest-neighbor transitions, so the
results apply wherever the classical non-colliding
theory does: discrete lattice paths, birth-death
chains, and continuous diffusions including Brownian
motion.
\end{abstract}

\subjclass[2020]{Primary 05A15; Secondary 05A19, 15A15, 60C05, 60J65, 82C22}

\keywords{coalescing random walks, ghost particles,
    Lindstr\"om--Gessel--Viennot lemma, Karlin--McGregor formula,
    determinant, interacting particle systems}

\maketitle

\vspace{1em}
\begin{center}
\textit{To Marek Bożejko, whose path crossed ours at just the right moments.}
\end{center}
\vspace{1em}

\section{Introduction}\label{sec:intro}
\subsection{The problem}
\label{sec:intro-problem}

Consider $n$ particles performing independent random walks
on a one-dimensional universe for which a version of the
\emph{`Darboux property'} holds true: two particles cannot
swap order without being at some intermediate moment in the
same state. (We postpone the formal definition to the
crossing property of \cref{def:planar} and, for continuous
time, to order preservation in \cref{sec:continuous-km}; for
now a concrete example to keep in mind is a system of simple
random walks, each taking $\pm 1$ steps and starting at even
integers; see the non-dotted lines in \cref{fig:intro-21}.)
When two particles meet, they coalesce into one. \emph{What
	is the probability that the particles end up at specified
	positions, having undergone a specified pattern of
	coalescences?}

This model appears throughout probability and statistical physics,
notably in the voter model~\cite{HolleyLiggett1975}---where tracing
ancestry backward in time produces coalescing lineages.

\medskip

For \emph{non-colliding} particles, exact probabilities are classical.
The Karlin--McGregor theorem~\cite{KM1959} expresses the probability
that $n$ particles starting at positions $x_1 \leq \cdots \leq x_n$
\emph{avoid collision} as a determinant:
\[
\PP(\text{particles reach } y_1, \ldots, y_n \text{ without colliding})
= \det \bigl( p(x_i \to y_j) \bigr)_{1 \leq i,j \leq n},
\]
where $p(x_i \to y_j)$ is the transition probability from $x_i$
to~$y_j$ for a single particle,
and the target positions $y_1 \leq \cdots \leq y_n$ are weakly increasing. 
The combinatorial version is the
Lindstr\"om--Gessel--Viennot (LGV) lemma~\cite{Lindstrom1973,GV1985}.

\medskip

When particles coalesce, the count changes. After $m$ coalescences,
only $k = n - m$ particles remain. The determinant formula breaks
down: the number of rows (initial particles) exceeds the number of
columns (final particles), leaving no square matrix.

\subsection{The ghost method}
\label{sec:intro-ghost}

Our solution is to keep the `discarded' particles walking. Whenever
two particles meet, the collision produces one \emph{heir} and one
\emph{ghost}: the heir continues as a single particle---free to meet
and merge again---while the ghost walks off as an independent random
walk that starts at the collision point and no longer interacts with
anything. Each later merger again splits off one fresh ghost, so a
single heir may absorb many particles by multiple collisions.

The total count---heirs plus ghosts---remains exactly~$n$ at every
stage of the dynamics. With $n$
initial particles and $n$ final entities, we can write a square
matrix and take its determinant. The ghosts are not physically
real from the viewpoint of coalescing random walks, but the
enlarged system admits exact determinantal formulas. We call
this the \emph{ghost particle method}. 

\Cref{fig:intro-21} illustrates this viewpoint for the
coalescence pattern~$2{+}1$: particles~$1$ and~$2$ collide,
producing one heir and one ghost, while particle~$3$ continues
undisturbed. This is the simplest case, a single merger; in general
an heir may merge repeatedly, each collision producing another ghost.

\begin{figure}[t]
\centering

\begin{tikzpicture}[scale=0.55]
  \begin{scope}
    \clip (-0.2,-0.2) rectangle (11.2,6.2);

    \foreach \x in {-1,...,12} {
      \foreach \t in {-1,...,7} {
        \pgfmathparse{mod(\x+\t,2)==0 ? 1 : 0}
        \ifnum\pgfmathresult>0
          \fill[gray!30] (\x,\t) circle (1.5pt);
        \fi
      }
    }

    \foreach \x in {-2,0,2,4,6,8,10,12} {
      \foreach \t in {-2,0,2,4,6,8} {
        \draw[gray!30, thin] (\x,\t) -- (\x-1,\t+1);
        \draw[gray!30, thin] (\x,\t) -- (\x+1,\t+1);
      }
    }
    \foreach \x in {-1,1,3,5,7,9,11} {
      \foreach \t in {-1,1,3,5,7} {
        \draw[gray!30, thin] (\x,\t) -- (\x-1,\t+1);
        \draw[gray!30, thin] (\x,\t) -- (\x+1,\t+1);
      }
    }
  \end{scope}

  \draw[->, thick] (-0.3,0) -- (11,0) node[right] {$x$};
  \draw[->, thick] (0,-0.3) -- (0,6.7) node[above] {$t$};

  \draw (-0.1, 6) -- (0.1, 6);
  \node[left] at (-0.15,6) {\small $T$};

  \fill[colA] (2,0) circle (4pt);
  \fill[colB] (4,0) circle (4pt);
  \fill[colC] (6,0) circle (4pt);

  \node[below] at (2,-0.15) {\small $x_1$};
  \node[below] at (4,-0.15) {\small $x_2$};
  \node[below] at (6,-0.15) {\small $x_3$};

  \draw[particleA] (2,0) -- (3,1) -- (4,2);

  \draw[particleB] (4,0) -- (5,1) -- (4,2);

  \draw[styleHeir] (4,2) -- (3,3) -- (2,4) -- (3,5) -- (4,6);

  \draw[styleGhost] (4,2) -- (5,3);
  \draw[styleGhost, transform canvas={shift={(0.1,-0.1)}}]
    (5,3) -- (6,4) -- (7,5);
  \draw[styleGhost] (7,5) -- (8,6);

  \draw[particleC] (6,0) -- (7,1) -- (6,2) -- (5,3) -- (6,4) -- (7,5) -- (6,6);

  \fill[black] (4,2) circle (5pt);
  \node[left=0.2] at (4,2) {\small $c$};

  \fill[colP] (4,6) circle (4pt);                                    %
  \fill[colC] (6,6) circle (4pt);                                    %
  \draw[colGone, fill=white, line width=1.5pt] (8,6) circle (4pt);   %

  \node[above] at (4,6.15) {\small $y_H$};
  \node[above] at (6,6.15) {\small $y_{H'}$};
  \node[above] at (8,6.15) {\small $y_g$};

  \begin{scope}[shift={(11,0.8)}]
    \draw[particleA] (0,5.2) -- (0.7,5.2);
    \node[right] at (0.8,5.2) {\small particle $1$};
    \draw[particleB] (0,4.4) -- (0.7,4.4);
    \node[right] at (0.8,4.4) {\small particle $2$};
    \draw[styleHeir] (0,3.6) -- (0.7,3.6);
    \node[right] at (0.8,3.6) {\small heir $H$};
    \draw[styleGhost] (0,2.8) -- (0.7,2.8);
    \node[right] at (0.8,2.8) {\small ghost $g$};
    \draw[particleC] (0,2.0) -- (0.7,2.0);
    \node[right] at (0.8,2.0) {\small particle $3$ = heir $H'$};
  \end{scope}

\end{tikzpicture}

\caption{\textbf{Coalescence on the checkerboard lattice (pattern $2{+}1$).}
Three particles start at $x_1 < x_2 < x_3$. Particles~$1$ (solid) and~$2$
(double) coalesce at~$c$, producing heir~$H$ (ticked) and a ghost (dotted).
Particle~$3$ (zigzag) does not coalesce; it is heir~$H'$.
The ghost shares two edges with heir~$H'$ (shown offset)---ghosts do not
interact and may cross any path freely. Final positions: $y_H < y_{H'} < y_g$.}
\label{fig:intro-21}

\end{figure}

\subsection{Main results}
\label{sec:intro-results}

\subsubsection{The coalescence formula}

Consider $n$ particles starting at positions $x_1 \leq \cdots \leq x_n$.
Suppose they coalesce into $k$ heirs at positions
$y_{H_1} <  \dots < y_{H_k}$, producing $m = n - k$ ghosts at positions
$y_{g_1}, \ldots, y_{g_m}$.
The \emph{coalescence pattern} is described by a \emph{composition}
$c_1{+}\cdots{+}c_k = n$, where~$c_j$ counts how many of the initial
particles merge into the heir~$H_j$.

A ghost $g$ comes to rest either to the left or to the
right of its heir, and we record which by a single bit, the
\emph{ghost sign} $\varepsilon_g \in \{-1, +1\}$: it is
$+1$ when the ghost lies to the left of its heir and $-1$
when it lies to the right. We
will often use Iverson's bracket notation
\begin{align*}
	[\varepsilon_g = +1] &=
	\begin{cases} 1 & \text{if } \varepsilon_g = +1, \\
		0 & \text{otherwise}, \end{cases} \\
	[\varepsilon_g = -1] &=
	\begin{cases} 1 & \text{if } \varepsilon_g = -1, \\
		0 & \text{otherwise}; \end{cases}
\end{align*}
in figures, where space is tight, we abbreviate these as
$[\varepsilon_g^+]$ and $[\varepsilon_g^-]$.

Each ghost is created when two adjacent particles merge and the
boundary between them dissolves. We index the ghost by its
\emph{junction index} $g \in \{2, \dots, n\}$, which also indexes the
ghost's column in the matrix below.

\fnote{%
  prose={Informal preview of the main theorem; the formal statement and its
    three Lean layers are at \Cref{thm:coalescence}.},
  lean={main_theorem_layer1, main_theorem_layer2, main_theorem_layer3},
  py={tests/test_determinant_identity.py}}%
\begin{theorem}[Coalescence formula, informal version of
  \cref{thm:coalescence}]\label{thm:intro-coalescence}
The probability that the final outcome follows the prescribed
coalescence pattern, with the final entities at the positions above, is
the determinant
\begin{equation}
	\label{eq:coalescence}
	\PP = \det M
\end{equation}
of the $n \times n$ matrix $M$ whose rows are indexed by the initial
particles and whose columns are indexed by the final entities. The
columns are arranged in groups of $c_1, \dots, c_k$: the $i$-th group
consists of the heir~$H_i$ followed by the $c_i - 1$ ghosts created on
the way to~$H_i$. An heir column holds the usual transition
probabilities
\[ M_{i,H} = p(x_i \to y_H), \]
while a ghost column~$g$ has a signed ``staircase'' structure of
$\pm$~transition probabilities and zeros, selected by the ghost sign:
\[
M_{i,g} = \begin{cases}
-[\varepsilon_g=-1]\cdot p(x_i \to y_g)  & \text{if } i < g,
\\[0.3em]
\phantom{-}[\varepsilon_g=+1] \cdot p(x_i \to y_g)   & \text{if } i \geq g.
\end{cases}
\]
\end{theorem}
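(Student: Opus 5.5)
We prove the formula by a Lindström--Gessel--Viennot style sign-reversing involution, adapted to the ghost-enlarged system. First we expand $\det M$ as a signed sum over permutations $\sigma$ of $\{1,\dots,n\}$ and over $n$-tuples of independent paths, where path $i$ runs from $x_i$ to the final position of column $\sigma(i)$. The entry $M_{i,\sigma(i)}$ contributes its transition probability together with the sign and Iverson factor coming from the staircase. By the Markov property, each product $\prod_i M_{i,\sigma(i)}$ is then a weighted sum over such path families.

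Second, we match the ``good'' configurations with genuine coalescing histories. Take a realization of the coalescing system with the prescribed pattern and ghost signs, in which the ghosts are run as independent walks from their collision points. At each collision with junction index $g$, particles $g-1$ and $g$ meet. Reading the history as $n$ paths, the heir continues along one incoming trajectory and the ghost continues along the other. We fix the convention so that the trajectory of the lower-indexed row (for $i<g$) continues as the ghost exactly when $\varepsilon_g=-1$, and the higher-indexed one when $\varepsilon_g=+1$. These are precisely the rows in which the staircase has nonzero entries. The minus sign for $i<g$ should exactly compensate the sign of the resulting permutation, so each genuine history appears in $\det M$ with weight $+1$ times its probability. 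Checking this sign bookkeeping group by group (within group $j$, the heir column followed by its $c_j-1$ ghost columns) is a routine induction on the number of mergers.

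Third, all remaining terms must cancel. For a path family not of the above form, we locate a first ``bad'' intersection, and the crossing (Darboux) property of \cref{def:planar} guarantees such a meeting point exists. There are two kinds:
\begin{itemize}
\item[(a)] Two heir-carrying paths from non-adjacent blocks meet, or two paths meet that are not supposed to merge. Here we swap tails exactly as in LGV; this changes $\sigma$ by a transposition and flips the sign.
\item[(b)] A ghost path meets something after its creation. Ghosts are non-interacting, so such meetings must not be resolved. We must choose the involution to ignore them, defining ``first intersection'' only among the paths that are still active: heirs, plus the would-be-merging pairs before their junction.
\end{itemize}
We also have to handle terms in which the staircase entry selects a row whose path never meets its neighbour's. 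In this case swapping the ghost and heir tails at the first meeting of the two relevant paths should pair a term of the form $-[\varepsilon=-1]p$ in a row $i<g$ against one of the form $[\varepsilon=+1]p$ in a row $i\ge g$, with opposite signs.

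The main obstacle is designing this involution so that it is genuinely weight-preserving and sign-reversing, in particular so that the Iverson brackets match on both sides of each swap. I would first try to define it via a canonical scan: the earliest time at which two paths with adjacent labels, in the current relabelling, share a site, with ties broken by the smallest label. I would verify it is an involution by checking that a tail swap does not change the earliest such meeting. A fallback is to induct on the number of mergers: use the Markov property at the time of the last collision, and apply Karlin--McGregor to the non-colliding stretch in between.
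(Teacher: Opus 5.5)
\textbf{Verdict: genuine gap.} Your architecture matches the paper's: Leibniz expansion into path families, the far-side rule for which incoming trajectory becomes the ghost, and a tail swap for the remaining terms. The sign bookkeeping is also routine, as you say. Label each intermediate cluster by its left endpoint and each ghost by its junction. A merger of $[a,g)$ and $[g,c)$ then leaves the permutation unchanged when $\varepsilon_g=+1$, and composes it with $(a\;g)$ when $\varepsilon_g=-1$; in that case the minus sign in the entry compensates.

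\textbf{First gap: the cancellation rule.} Your pairing of a term $-[\varepsilon_g=-1]\,p(x_i\to y_g)$ with $i<g$ against a term $[\varepsilon_g=+1]\,p(x_{i'}\to y_g)$ with $i'\ge g$ cannot work. The final positions are fixed, so $\varepsilon_g$ is fixed and one of the two brackets is $0$. A swap that hands ghost column $g$ to a row on the other side of the junction sends a surviving term to a vanishing one, and nothing cancels. The same danger affects your type-(a) swaps: exchanging $\sigma(I)$ and $\sigma(J)$ may push some ghost column to the wrong side of its junction.

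What is missing is the decision rule at a meeting, and it must be read off the permutation, not off the block structure. Scan, in time order, the meetings of the \emph{current representatives} $I,J$ of adjacent active clusters $[a,g)$ and $[g,c)$. These are not the original particles $g-1,g$, one of which may already be retired.
\begin{itemize}
\item If $\sigma(I)=g$ or $\sigma(J)=g$, the meeting must be processed as a coalescence: retire that row, and its tail becomes the ghost path. Swapping here would kill the term.
\item Only if neither holds is the meeting spurious, and that is where you swap.
\end{itemize}
You then need the swap criterion (\Cref{lem:swap-criterion}), which shows every bracket stays nonzero after a spurious swap:
\begin{itemize}
\item the row owning column $g$ is unchanged;
\item for ghosts outside $[a,c)$, both $I$ and $J$ lie on the same side of the junction;
\item ghosts inside $[a,c)$ have already fired and their rows are retired, so they are neither $I$ nor $J$.
\end{itemize}
The partner family halts at the same meeting, because the prefix and the unordered pair $\{\sigma(I),\sigma(J)\}$ are unchanged. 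Your phrases ``paths that are not supposed to merge'' and ``would-be-merging pairs'' must be made precise in exactly this way. For instance, two representatives of the \emph{same} block meeting across a ghost junction $g$ is a failure whenever neither row owns column $g$.

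\textbf{Second gap: existence of a bad meeting.} You assert that the crossing property guarantees a bad meeting in every family that does not come from a history. The dangerous families are those in which every meeting of active representatives passes the test above, yet the scan ends with some ghost junction never dissolved, so the row owning that column is still active. The crossing property~\ref{itm:P1-crossing} only says that active paths which never met end in source order. Combined with the brackets, this gives two facts: heir-order preservation, and that each undissolved ghost's row lies strictly between $g$ and the row of $\heir(g)$.

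Turning these facts into a contradiction is a genuine counting argument. In the paper it is \Cref{lem:assignment-rigidity}, a pigeonhole on the single free label in the window $[a,g)$ below the minimal undissolved ghost. Without it, or a substitute, you have not shown that the fixed points of your involution are exactly the genuine histories. Your fallback via the Markov property at the last collision is too undeveloped to close either gap.
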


See \cref{fig:matrix-structure} for the shape of~$M$.

\begin{figure}[t]
\centering

\begin{tikzpicture}[scale=0.95]

\def\cellsize{2.4}

\colorlet{cbYellow}{colMatrixHeir}   %
\colorlet{cbOrange}{colMatrixPlus}   %
\colorlet{cbPurple}{colMatrixMinus}  %

\pgfdeclarepatternformonly{dashed ne lines}
  {\pgfpoint{-0.1cm}{-0.1cm}}{\pgfpoint{0.4cm}{0.4cm}}
  {\pgfpoint{0.3cm}{0.3cm}}
  {\pgfsetlinewidth{0.6pt}\pgfsetdash{{2pt}{2pt}}{0pt}
   \pgfpathmoveto{\pgfpoint{-0.1cm}{-0.1cm}}
   \pgfpathlineto{\pgfpoint{0.4cm}{0.4cm}}\pgfusepath{stroke}}

\pgfdeclarepatternformonly{solid nw lines}
  {\pgfpoint{-0.1cm}{-0.1cm}}{\pgfpoint{0.4cm}{0.4cm}}
  {\pgfpoint{0.3cm}{0.3cm}}
  {\pgfsetlinewidth{0.6pt}\pgfpathmoveto{\pgfpoint{-0.1cm}{0.4cm}}
   \pgfpathlineto{\pgfpoint{0.4cm}{-0.1cm}}\pgfusepath{stroke}}

\tikzset{
    cellHeir/.style={fill=cbYellow, draw=black, line width=0.5pt},
    cellPlus/.style={fill=cbOrange, postaction={pattern=dashed ne lines, pattern color=black},
                     draw=black, line width=0.5pt},
    cellMinus/.style={fill=cbPurple, postaction={pattern=solid nw lines, pattern color=black},
                      draw=black, line width=0.5pt},
    entrylabel/.style={font=\scriptsize, fill=white, inner sep=1pt},
}

\draw[cellHeir] (0*\cellsize, 3*\cellsize) rectangle (1*\cellsize, 4*\cellsize);
\node[entrylabel] at (0.5*\cellsize, 3.5*\cellsize) {$p(x_1 \to y_H)$};

\draw[cellMinus] (1*\cellsize, 3*\cellsize) rectangle (2*\cellsize, 4*\cellsize);
\node[entrylabel] at (1.5*\cellsize, 3.5*\cellsize) {$-[\varepsilon_g^-]\, p(x_1 \to y_g)$};

\draw[cellHeir] (2*\cellsize, 3*\cellsize) rectangle (3*\cellsize, 4*\cellsize);
\node[entrylabel] at (2.5*\cellsize, 3.5*\cellsize) {$p(x_1 \to y_{H'})$};

\draw[cellMinus] (3*\cellsize, 3*\cellsize) rectangle (4*\cellsize, 4*\cellsize);
\node[entrylabel] at (3.5*\cellsize, 3.5*\cellsize) {$-[\varepsilon_{g'}^-]\, p(x_1 \to y_{g'})$};

\draw[cellHeir] (0*\cellsize, 2*\cellsize) rectangle (1*\cellsize, 3*\cellsize);
\node[entrylabel] at (0.5*\cellsize, 2.5*\cellsize) {$p(x_2 \to y_H)$};

\draw[cellPlus] (1*\cellsize, 2*\cellsize) rectangle (2*\cellsize, 3*\cellsize);
\node[entrylabel] at (1.5*\cellsize, 2.5*\cellsize) {$[\varepsilon_g^+]\, p(x_2 \to y_g)$};

\draw[cellHeir] (2*\cellsize, 2*\cellsize) rectangle (3*\cellsize, 3*\cellsize);
\node[entrylabel] at (2.5*\cellsize, 2.5*\cellsize) {$p(x_2 \to y_{H'})$};

\draw[cellMinus] (3*\cellsize, 2*\cellsize) rectangle (4*\cellsize, 3*\cellsize);
\node[entrylabel] at (3.5*\cellsize, 2.5*\cellsize) {$-[\varepsilon_{g'}^-]\, p(x_2 \to y_{g'})$};

\draw[cellHeir] (0*\cellsize, 1*\cellsize) rectangle (1*\cellsize, 2*\cellsize);
\node[entrylabel] at (0.5*\cellsize, 1.5*\cellsize) {$p(x_3 \to y_H)$};

\draw[cellPlus] (1*\cellsize, 1*\cellsize) rectangle (2*\cellsize, 2*\cellsize);
\node[entrylabel] at (1.5*\cellsize, 1.5*\cellsize) {$[\varepsilon_g^+]\, p(x_3 \to y_g)$};

\draw[cellHeir] (2*\cellsize, 1*\cellsize) rectangle (3*\cellsize, 2*\cellsize);
\node[entrylabel] at (2.5*\cellsize, 1.5*\cellsize) {$p(x_3 \to y_{H'})$};

\draw[cellMinus] (3*\cellsize, 1*\cellsize) rectangle (4*\cellsize, 2*\cellsize);
\node[entrylabel] at (3.5*\cellsize, 1.5*\cellsize) {$-[\varepsilon_{g'}^-]\, p(x_3 \to y_{g'})$};

\draw[cellHeir] (0*\cellsize, 0*\cellsize) rectangle (1*\cellsize, 1*\cellsize);
\node[entrylabel] at (0.5*\cellsize, 0.5*\cellsize) {$p(x_4 \to y_H)$};

\draw[cellPlus] (1*\cellsize, 0*\cellsize) rectangle (2*\cellsize, 1*\cellsize);
\node[entrylabel] at (1.5*\cellsize, 0.5*\cellsize) {$[\varepsilon_g^+]\, p(x_4 \to y_g)$};

\draw[cellHeir] (2*\cellsize, 0*\cellsize) rectangle (3*\cellsize, 1*\cellsize);
\node[entrylabel] at (2.5*\cellsize, 0.5*\cellsize) {$p(x_4 \to y_{H'})$};

\draw[cellPlus] (3*\cellsize, 0*\cellsize) rectangle (4*\cellsize, 1*\cellsize);
\node[entrylabel] at (3.5*\cellsize, 0.5*\cellsize) {$[\varepsilon_{g'}^+]\, p(x_4 \to y_{g'})$};

\node[left] at (0, 3.5*\cellsize) {$x_1$};
\node[left] at (0, 2.5*\cellsize) {$x_2$};
\node[left] at (0, 1.5*\cellsize) {$x_3$};
\node[left] at (0, 0.5*\cellsize) {$x_4$};

\node[above] at (0.5*\cellsize, 4*\cellsize) {$H$};
\node[above] at (1.5*\cellsize, 4*\cellsize) {$g$};
\node[above] at (2.5*\cellsize, 4*\cellsize) {$H'$};
\node[above] at (3.5*\cellsize, 4*\cellsize) {$g'$};

\node[below, font=\footnotesize] at (0.5*\cellsize, 0) {heir};
\node[below, font=\footnotesize] at (1.5*\cellsize, 0) {ghost};
\node[below, font=\footnotesize] at (2.5*\cellsize, 0) {heir};
\node[below, font=\footnotesize] at (3.5*\cellsize, 0) {ghost};

\draw[line width=3pt, black]
    (0, 4*\cellsize) -- (1*\cellsize, 4*\cellsize) -- (1*\cellsize, 3*\cellsize) --
    (2*\cellsize, 3*\cellsize) -- (2*\cellsize, 2*\cellsize) --
    (3*\cellsize, 2*\cellsize) -- (3*\cellsize, 1*\cellsize) --
    (4*\cellsize, 1*\cellsize) -- (4*\cellsize, 0);

\begin{scope}[shift={(-1, -1.8)}]
  \draw[cellHeir] (0, 0) rectangle (0.5, 0.5);
  \node[right, font=\small] at (0.6, 0.25) {heir column};

  \draw[cellMinus] (4, 0) rectangle (4.5, 0.5);
  \node[right, font=\small] at (4.6, 0.25) {above step ($i <g $)};

  \draw[cellPlus] (9, 0) rectangle (9.5, 0.5);
  \node[right, font=\small] at (9.6, 0.25) {below step ($i \geq g$)};
\end{scope}

\end{tikzpicture}

\caption{\textbf{The coalescence matrix: staircase structure.}
Matrix $M$ for the $2{+}2$ coalescence pattern:
the initial particles $1$ and $2$ merge into heir~$H$;
the initial particles $3$ and $4$ merge into heir~$H'$.
Heir columns (yellow) contain plain transition probabilities
$p(x_i \to y_H)$, $p(x_i \to y_{H'})$.
Ghost columns show the staircase pattern with
the thick staircase line separating the two regions.
(With the notations of
\cref{sec:setup-coalescence,sec:setup-final}, the heir $H$
corresponds to the interval $[1,3)$, and its ghost $g$
corresponds to the junction $2$;
similarly the heir $H'$ corresponds to the
interval $[3,5)$ and its ghost $g'$ to the junction $4$,
so that the columns $H, g, H', g'$ correspond to the final
entities $[1,3),\, 2,\, [3,5),\, 4$ listed in $\min$-order, with
$\min$-values $1 < 2 < 3 < 4$.)
}
\label{fig:matrix-structure}
\end{figure}

\subsubsection{Example: coalescence pattern $2{+}1$}
\label{sec:intro-example}

Returning to \Cref{fig:intro-21}: three particles start at
$x_1 < x_2 < x_3$. Particles~$1$ and~$2$ collide and merge
into heir~$H$; particle~$3$ does not collide and is the
sole particle behind heir~$H'$. The result: two heirs at positions
$y_H < y_{H'}$ and one ghost~$g$. Heir $H$ and its unique ghost $g$
form the first group of columns, the heir $H'$ forms the second group.
The $3 \times 3$ matrix is:
\[
M = \begin{pmatrix}
p(x_1 \to y_H) & -[\varepsilon_g=-1] \cdot p(x_1 \to y_g) & p(x_1 \to y_{H'}) \\[1ex]
p(x_2 \to y_H) & \phantom{-}[\varepsilon_g=+1] \cdot p(x_2 \to y_g) & p(x_2 \to y_{H'}) \\[1ex]
p(x_3 \to y_H) &  \phantom{-}[\varepsilon_g=+1] \cdot p(x_3 \to y_g) & p(x_3 \to y_{H'})
\end{pmatrix}.
\]

\medskip

More concretely, the scenario shown on \cref{fig:intro-21}
corresponds to the case $\varepsilon_g=-1$ (i.e.,~$y_g \geq
y_{H}$), so the matrix takes the form
\begin{equation}\label{eq:intro-right}
M = \begin{pmatrix}
	p(x_1 \to y_H) & -p(x_1 \to y_g) & p(x_1 \to y_{H'}) \\[1ex]
	p(x_2 \to y_H) & 0 & p(x_2 \to y_{H'}) \\[1ex]
	p(x_3 \to y_H) & 0 & p(x_3 \to y_{H'})
\end{pmatrix};
\end{equation}
the determinant of~\eqref{eq:intro-right} expands to a difference of two
products of single-particle transition probabilities.

In the other case $\varepsilon_g=+1$ (i.e., $y_g \leq y_{H}$)
the matrix takes the form
\begin{equation}\label{eq:intro-left}
M = \begin{pmatrix}
	p(x_1 \to y_H) & 0 & p(x_1 \to y_{H'}) \\[1ex]
	p(x_2 \to y_H) & p(x_2 \to y_g) & p(x_2 \to y_{H'}) \\[1ex]
	p(x_3 \to y_H) & p(x_3 \to y_g) & p(x_3 \to y_{H'})
\end{pmatrix};
\end{equation}
the determinant of~\eqref{eq:intro-left} is a signed sum of four
products.

This asymmetry---two terms versus four---reflects the staircase
structure: the ghost's index and sign determine which rows contribute.
Such asymmetry is a general feature of the coalescence formula.

\medskip

Several boundary configurations---weakly increasing heir
positions, a ghost coinciding with its heir, and coinciding
initial positions---refine how the theorem should be read; we
treat them in \Cref{rem:formula-edge-cases}.

\subsubsection{The coalescence determinant}

For applications, one typically wants the probability of heir
positions alone, summed over all ghost positions. Integrating out
the ghosts yields the \emph{coalescence determinant}
(\Cref{sec:determinant}): a ghost-free, closed-form determinant whose heir
columns contain transition probabilities and whose ghost columns
contain cumulative distribution functions in a staircase pattern.

\subsection{Relation to classical determinantal formulas}
\label{sec:intro-lgv}
The coalescence formula \eqref{eq:coalescence} generalizes the
Karlin--McGregor / Lindstr\"om--Gessel--Viennot (LGV) determinant,
and its proof adapts the classical
segment-swap argument (\Cref{sec:classic-proof}), which pairs the
configurations where particles cross and cancels them in the signed
sum of the determinant. The argument originates with Karlin and
McGregor~\cite{KM1959}; Lindstr\"om~\cite{Lindstrom1973} independently
discovered the same cancellation in the setting of matroid theory,
Gessel and Viennot~\cite{GV1985} developed it into the LGV lemma for
lattice-path counting, and Stembridge~\cite{Stembridge1990} extended it
to acyclic digraphs, introducing $D$-compatibility as the structural
condition; see the survey of
Krattenthaler~\cite{Krattenthaler2015}.

In our setting, crossings are not forbidden but prescribed: the ghost
configuration specifies which crossings must occur. The involution has
the same structure---swap at the first wrong crossing---but ``wrong''
now means ``crossing that violates the prescribed pattern.'' The ghost
signs~$(\varepsilon_g)$ encode which crossings are required, and the
Iverson brackets in the ghost columns retain exactly those terms
where particles cross at the right places. When no coalescences occur (no
ghosts, every entity an heir), the formula reduces to the
classical LGV determinant.

\subsection{Prior work}
\label{sec:intro-prior}

Coalescing Brownian motions were constructed by
Arratia~\cite{Arratia1979}. Exact distributional results
have since been obtained through several independent
lines of work.

\subsubsection{Gap distributions and the IPDF method}

Doering and
ben-Avraham~\cite{DoeringBenAvraham1988} introduced the
\emph{inter-particle distribution function} (IPDF) method,
computing nearest-neighbor gap distributions for
diffusion-limited coalescence;
ben-Avraham~\cite{benAvraham1998} extended this to the
full hierarchy of empty-interval probabilities;
ben-Avraham and Brunet~\cite{benAvrahamBrunet2005}
extracted explicit densities for consecutive spacings.
See \Cref{sec:intro-gaps-warren} for the ghost-based
derivation.

\subsubsection{Warren's determinantal formula}
\label{sec:intro-warren-prior}

Warren~\cite{Warren2007} proved another $n \times n$
determinantal formula for coalescing Brownian motions.
His mechanism does not use ghosts: after coalescence,
merged particles simply follow the same trajectory.
Writing $Z_t(x_i)$ for the position at time~$t$ of the
survivor containing particle~$x_i$, the joint cumulative
distribution function
$\PP(Z_t(x_i) \leq y_i \text{ for all } i)$ can be expressed
as a determinant. Since merged particles share a trajectory,
the formula does not resolve which particles have
coalesced. Assiotis, O'Connell, and
Warren~\cite{AssiotisOConnellWarren2019} extended
this via intertwining relations to general diffusions,
and Assiotis~\cite{Assiotis2018,Assiotis2023} to
birth-death chains. The ghost formula
provides a finer, pattern-level resolution of Warren's
determinant (see \Cref{sec:intro-gaps-warren}).

\subsubsection{Pfaffian point processes}
\label{sec:intro-pfaffian-prior}

A different question arises for infinitely many
particles: what is the statistical structure of the
surviving positions? Tribe and
Zaboronski~\cite{TZ2011} proved that under the maximal
entrance law for coalescing Brownian motions (particles
starting from every point of~$\RR$), the surviving
positions form a Pfaffian point process with an explicit
$2 \times 2$ matrix kernel involving the complementary
error function. Garrod, Poplavskyi, Tribe, and
Zaboronski~\cite{GarrodPTZ2018} extended this to
continuous-time random walks on~$\ZZ$ with spatially
inhomogeneous rates and arbitrary deterministic
initial conditions.
Their framework also covers annihilation and mixed
coalescence-annihilation and, as shown by Tribe and
Zaboronski~\cite{TribeZaboronski2026}, all entrance
laws---but it requires a time-homogeneous Markov
generator with a specific algebraic structure
(the spin-pair identity).
Using the Pfaffian structure,
Fomichov~\cite{Fomichov2016} found the exact
distribution of the number of surviving particles,
and Glinyanaya and
Fomichov~\cite{GlinyanayaFomichov2018} proved a
central limit theorem with explicit variance.

The ghost formula and this Pfaffian approach have
complementary strengths (\Cref{sec:intro-wide-scope}); see
\Cref{sec:intro-pfaffian-companion} for the ghost-based
derivation.

\subsubsection{The P\'olya web}

A key motivating example is the \emph{P\'olya web},
introduced by \urban~\cite{Urban2025}: coalescing
chains on~$\NN^2$ whose steps follow a P\'olya urn rule.
Each walk has a Beta-distributed limiting direction, and
coalescence corresponds to equality of limits. For
pairwise coalescence, Urban shows that the joint density
of these limiting directions is a determinant whose
entries alternate between Beta density and cumulative
distribution functions---a special case of our
coalescence determinant (\Cref{sec:intro-polya-companion}).

\subsection{Scope and structure of the method}
\label{sec:intro-scope}

Beyond the specific formulas, the ghost method has
several structural features---wide applicability,
algebraic flexibility, and exactness---that enable the
results of the companion papers.

\subsubsection{Wide scope}
\label{sec:intro-wide-scope}

The ghost formula requires exactly the same assumptions
as the Karlin--McGregor theorem~\cite{KM1959}---order
preservation, identical and independent dynamics, the strong Markov
property, and
meeting times being stopping times (\Cref{sec:continuous-km})---and
therefore shares its broad scope: lattice random
walks with arbitrary inhomogeneous transition
probabilities, birth-death chains, Brownian motion,
and more generally any \emph{skip-free} Markov process
(transitions only to neighboring states, so that
particles cannot change order without first meeting).
The analytic approaches surveyed in \Cref{sec:intro-prior}
have a different reach: the Pfaffian point process
method~\cite{TZ2011,GarrodPTZ2018} requires access to the
Markov generator and its spin-pair identity, and the IPDF
method~\cite{DoeringBenAvraham1988,benAvraham1998} relies on
Brownian-motion-specific integrals; but they reach results
the ghost method does not, such as mixed
coalescence-annihilation and all entrance
laws~\cite{TribeZaboronski2026}. Neither framework subsumes
the other.

\subsubsection{Algebraic flexibility and exactness}

The ghost formula is a determinant, and an \emph{exact} one---not
an asymptotic expansion or a scaling limit. The applications in the
companion papers rest on precisely this: determinants admit row and
column operations, and exactness permits algebraic rearrangement
\emph{before} any limit is taken. Confluent limits of nearly
coinciding boundary points produce the $2 \times 2$ block structure
of the Tribe--Zaboronski Pfaffian
kernel~\cite{TZ2011,GarrodPTZ2018,Sniady2026pfaffian}; collapsing
the determinant's columns one by one recovers Warren's
formula~\cite{Warren2007}, whose cumulative-distribution entries are
revealed as the ghosts' contribution; and rearranging exact cumulant
integrals yields a combinatorial central limit theorem for basin
boundaries~\cite{Sniady2026pfaffian}.

\medskip

At a structural level, the ghost method is a minimal
extension of the Karlin--McGregor theorem: the same
sign-reversing involution applied to the same
determinantal language, with one new ingredient---the
ghost particles that restore the matrix to square
form.

\subsection{New viewpoint on the classic construction}

The classic construction of a finite coalescing system fixes a
priority order on the particles, runs their trajectories, and at each
collision keeps the highest-priority particle while discarding the
rest~\cite{Arratia1979}. Our viewpoint reclaims this waste: the
absorbed particles are reclassified as ghosts, and the classic
construction is thereby retrofitted to produce the system of $n$
particles and ghosts studied here (\cref{sec:continuous}).

\subsection{Companion papers}
\label{sec:intro-companion}

This paper is part of a series of five
(\Cref{fig:companion-papers}). The left column
develops exact combinatorial formulas (this paper for
coalescence, a companion for annihilation), while the
right column applies them to probability (gap
distributions, Pfaffian point processes, and a
detailed example). The top row is the coalescence
story (ghost particle method $\to$
wall-particle system), the bottom row starts
from annihilation (\emph{ghost pair method}) and uses
it to derive Pfaffian structure for the coalescing
system. The two rows are independent---neither relies
on the other. A fifth paper~\cite{BSTU2026} applies
both lines of work to the P\'olya web.

\begin{figure}[t]
\centering
\ifdefined\tikzexternalize\tikzset{external/export=false}\fi%
\begin{tikzpicture}[
  papernode/.style={
    draw=colB, very thick, rounded corners=4pt,
    minimum width=3.8cm, minimum height=1.6cm,
    text width=3.4cm, align=center,
    font=\small
  },
  dashednode/.style={
    papernode, densely dotted, draw=colA
  },
  implyarrow/.style={
    -{Implies}, double, double distance=2pt,
    thick
  },
  paralleline/.style={
    dashed, thick
  },
  collabel/.style={
    font=\small\itshape, text width=3.8cm,
    align=center
  }
]

\def\Wx{0}      %
\def\Ex{8.0}    %
\def\Ty{1.0}    %
\def\My{-1.5}   %
\def\By{-4.0}   %
\def\Bpad{2.3}  %
\def\Btop{2.2}  %
\def\Bbot{-5.2} %

\draw[black!30, thick, dashed, rounded corners=12pt]
  ({\Wx-\Bpad}, \Bbot) rectangle ({\Wx+\Bpad}, \Btop);
\draw[black!30, thick, dashed, rounded corners=12pt]
  ({\Ex-\Bpad}, \Bbot) rectangle ({\Ex+\Bpad}, \Btop);

\node[collabel] at (\Wx, {\Bbot-0.8})
  {exact combinatorial\\formulas};
\node[collabel] at (\Ex, {\Bbot-0.8})
  {probability\\applications};

\node[papernode] (NW) at (\Wx, \Ty)
  {this paper\\[2pt]
   ghost particle method};

\node[papernode] (NE) at (\Ex, \Ty)
  {\cite{Sniady2026coalescenceApplications}\\[2pt]
   wall-particle system};

\node[dashednode] (SW) at (\Wx, \By)
  {\cite{Sniady2026annihilation}\\[2pt]
   ghost pair method};

\node[papernode] (ME) at (\Ex, \My)
  {\cite{BSTU2026}\\[2pt]
   example: P\'olya web};

\node[papernode] (SE) at (\Ex, \By)
  {\cite{Sniady2026pfaffian}\\[2pt]
   Pfaffian structure\\of walls};

\draw[implyarrow] (NW) -- (NE)
  node[midway, above, font=\small] {application};

\draw[implyarrow] (SW) -- (SE)
  node[midway, above, font=\small,
       text width=2cm, align=center]
    {cancellative\\labeling};

\draw[implyarrow] (NE) -- (ME);
\draw[implyarrow] (SE) -- (ME);

\draw[paralleline] (NW) -- (SW)
  node[midway, left, font=\small\itshape,
       text width=1.8cm, align=right]
    {parallel\\constructions};

\end{tikzpicture}

\vspace{1.5em}
\begin{tikzpicture}[font=\small]
  \draw[colB, very thick, rounded corners=3pt]
    (0,0) rectangle (0.8,0.4);
  \node[right] at (0.9, 0.2) {coalescence};
  \draw[colA, very thick, densely dotted,
        rounded corners=3pt]
    (3.8,0) rectangle (4.6,0.4);
  \node[right] at (4.7, 0.2) {annihilation};
\end{tikzpicture}

\caption{\textbf{This paper and its four companions.} Two parallel
combinatorial constructions---the ghost particle method for coalescence
developed here and the ghost pair method for annihilation---yield exact
determinant formulas. These are applied to probability, giving gap
distributions and Warren's formula and the Pfaffian structure of walls,
and both lines converge on the P\'olya web as a worked example.}
\label{fig:companion-papers}
\end{figure}

\subsubsection{Gap distributions and Warren's formula
  {\normalfont\cite{Sniady2026coalescenceApplications}}}
\label{sec:intro-gaps-warren}

Under the maximal entrance law
(\Cref{sec:intro-pfaffian-prior}), coalescing particles come
down from infinity: at any positive time only finitely many
\emph{survivors} remain per bounded interval, each attracting a
basin of initial particles that eventually merge into it. The
boundaries between adjacent basins are the \emph{walls}
(\Cref{fig:wall-particle}).

\begin{figure}[t]
\centering

\begin{tikzpicture}[scale=0.75]

\def\W{11.5}  %
\def\H{6.0}   %

\draw[gray, thick] (-0.3, 0) -- (\W, 0);
\draw[gray, thick] (-0.3, \H) -- (\W, \H);

\node[gray] at (-0.8, 0) {$\cdots$};
\node[gray] at (\W + 0.5, 0) {$\cdots$};
\node[gray] at (-0.8, \H) {$\cdots$};
\node[gray] at (\W + 0.5, \H) {$\cdots$};

\draw[->, thick, black] (-1.2, 0.4) -- (-1.2, \H - 0.4);
\node[rotate=90, black] at (-1.6, \H/2) {time};

\node[right, font=\small] at (\W + 0.9, 0) {$t = 0$};
\node[right, font=\small] at (\W + 0.9, \H) {$t > 0$};

\draw[black!50, line width=0.4pt]
  (0.0, 0.0) -- (0.38, 0.75) -- (0.0, 1.5) -- (0.38, 2.25);
\draw[black!50, line width=1.0pt]
  (0.38, 2.25) -- (0.75, 3.0) -- (1.12, 3.75) -- (1.5, 4.5);
\draw[black!50, line width=1.4pt]
  (1.5, 4.5) -- (1.12, 5.25) -- (1.5, 6.0);
\draw[black!30, line width=0.4pt]
  (0.75, 0.0) -- (1.12, 0.75) -- (0.75, 1.5) -- (0.38, 2.25);
\draw[black!30, line width=0.4pt]
  (1.5, 0.0) -- (1.12, 0.75);
\draw[black!30, line width=0.4pt]
  (2.25, 0.0) -- (2.62, 0.75) -- (2.25, 1.5) -- (2.62, 2.25)
  -- (2.25, 3.0) -- (1.88, 3.75) -- (1.5, 4.5);

\draw[colA, line width=0.4pt]
  (3.0, 0.0) -- (3.38, 0.75);
\draw[colA, line width=0.7pt]
  (3.38, 0.75) -- (3.75, 1.5) -- (4.12, 2.25);
\draw[colA, line width=1.4pt]
  (4.12, 2.25) -- (4.5, 3.0) -- (4.12, 3.75) -- (3.75, 4.5)
  -- (3.38, 5.25) -- (3.0, 6.0);
\draw[colA!40, line width=0.4pt]
  (3.75, 0.0) -- (3.38, 0.75);
\draw[colA!40, line width=0.4pt]
  (4.5, 0.0) -- (4.88, 0.75) -- (4.5, 1.5) -- (4.12, 2.25);
\draw[colA!40, line width=0.4pt]
  (5.25, 0.0) -- (4.88, 0.75);
\draw[colA!40, line width=0.4pt]
  (6.0, 0.0) -- (5.62, 0.75) -- (5.25, 1.5) -- (4.88, 2.25)
  -- (4.5, 3.0);

\draw[colB, line width=0.4pt]
  (6.75, 0.0) -- (7.12, 0.75) -- (7.5, 1.5) -- (7.88, 2.25);
\draw[colB, line width=1.4pt]
  (7.88, 2.25) -- (7.5, 3.0) -- (7.12, 3.75) -- (6.75, 4.5)
  -- (7.12, 5.25) -- (7.5, 6.0);
\draw[colB!40, line width=0.4pt]
  (7.5, 0.0) -- (7.88, 0.75) -- (8.25, 1.5) -- (7.88, 2.25);
\draw[colB!40, line width=0.4pt]
  (8.25, 0.0) -- (8.62, 0.75) -- (8.25, 1.5);
\draw[colB!40, line width=0.4pt]
  (9.0, 0.0) -- (8.62, 0.75);

\draw[black!50, line width=0.4pt]
  (9.75, 0.0) -- (10.12, 0.75);
\draw[black!50, line width=0.7pt]
  (10.12, 0.75) -- (10.5, 1.5) -- (10.12, 2.25)
  -- (9.75, 3.0) -- (10.12, 3.75) -- (10.5, 4.5)
  -- (10.88, 5.25) -- (11.25, 6.0);
\draw[black!30, line width=0.4pt]
  (10.5, 0.0) -- (10.12, 0.75);

\foreach \px in {0.00, 0.75, 1.50, 2.25, 3.00, 3.75, 4.50,
                 5.25, 6.00, 6.75, 7.50, 8.25, 9.00, 9.75, 10.50} {
  \fill (\px, 0) circle (1.5pt);
}

\foreach \bx in {2.62, 6.38, 9.38} {
  \fill (\bx, 0.30) -- ++(-0.22, -0.42) -- ++(0.44, 0) -- cycle;
}

\foreach \hx in {1.50, 3.00, 7.50, 11.25} {
  \fill (\hx, \H) circle (5pt);
}

\end{tikzpicture}

\caption{Coalescing random walks starting from every site.
Paths merge on contact; line weight increases with each merger.
Walls (triangles at the bottom) mark the boundaries between basins of
attraction at~\mbox{$t = 0$}; survivors (large dots on top) are the
particles that remain at~$t > 0$, one per basin.}
\label{fig:wall-particle}
\end{figure}

The companion
paper~\cite{Sniady2026coalescenceApplications} applies the
coalescence determinant (\Cref{sec:determinant}) to this
\emph{wall-particle system}---the joint process of survivors and
walls---deriving gap distributions and generalizing Warren's
formula (\Cref{sec:intro-warren-prior}) to arbitrary skip-free
processes.

\subsubsection{Annihilation
  {\normalfont\cite{Sniady2026annihilation}}}
\label{sec:intro-annihilation}

The ghost method also applies to annihilation ($A + A \to
\emptyset$), where both particles are destroyed upon collision.
Each collision now produces a \emph{ghost pair}---two
independent random walks starting from the collision
point---rather than one heir and one ghost. The companion
paper~\cite{Sniady2026annihilation} uses this \emph{ghost pair
method} to derive an annihilation formula with the same
determinantal structure and sign-reversing involution; when all
$n = 2k$ particles annihilate completely, the determinant
reduces to a Pfaffian. Applications include domain wall dynamics
in the Ising--Glauber model~\cite{benAvrahamHavlin2000}.

\subsubsection{Pfaffian structure of walls
  {\normalfont\cite{Sniady2026pfaffian}}}
\label{sec:intro-pfaffian-companion}

The companion paper~\cite{Sniady2026pfaffian} proves a Pfaffian
empty-interval formula for the walls of any skip-free coalescing
system: a deterministic \emph{cancellative labeling} converts
pairwise coalescence into total annihilation, and the
annihilation formula~\cite{Sniady2026annihilation} then supplies
the Pfaffian structure, yielding explicit cumulants and a central
limit theorem for the wall count. \emph{Checkerboard duality}
identifies surviving particles with walls of a dual process,
transferring the Pfaffian structure to the survivors themselves;
specializing to Brownian motion under the maximal entrance law
recovers the Tribe--Zaboronski kernel~\cite{TZ2011,GarrodPTZ2018}.

\subsubsection{The P\'olya web
  {\normalfont\cite{BSTU2026}}}
\label{sec:intro-polya-companion}

The companion paper~\cite{BSTU2026} applies both the coalescence
determinant and the Pfaffian machinery to the P\'olya
web~\cite{Urban2025}. The P\'olya web is a natural testing ground
for the general theory: it is genuinely non-homogeneous
(transition probabilities depend on position) yet exactly
solvable, and it mixes the discrete (finitely many survivors at
each level) with the continuous (each survivor acquires a
Beta-distributed asymptotic direction). In suitable projective
coordinates the paper derives exact configuration probabilities,
an arcsine law for boundary positions, and Rayleigh spacing for
gaps between adjacent survivors; the Pfaffian kernel, built from
Beta crossing probabilities, converges to the complementary error
function kernel of coalescing Brownian motions, and the number of
survivors satisfies a central limit theorem with the
Fano factor of coalescing Brownian motions. Urban's original proof~\cite{Urban2025}
reaches the same determinant from a complementary
direction---conditioning the Karlin--McGregor determinant on
successive coalescences---and derives the distribution of the
number of survivors and edge scaling to a Yule web.

\subsection{Organization}
\label{sec:intro-organization}

\Cref{sec:setup} formalizes the coalescence model on weighted
directed acyclic graphs, defining \emph{performances} (coalescence
histories) and their weights. \Cref{sec:formula} states precisely the
main result, the coalescence formula, and gives an outline of its
proof; the proof itself occupies \Cref{sec:proof-setup,%
sec:proof-attribution,sec:rehearsal,sec:proof-bijection,%
sec:involution,sec:proof-main}, which develop, in turn, the
combinatorial framework, the attribution map, the rehearsal
algorithm, their mutual inversion, the sign-reversing involution,
and the assembly of the proof. \Cref{sec:continuous} extends the
formula to continuous time and space, covering Brownian motion and
birth-death chains, and \Cref{sec:determinant} integrates out ghost
positions, yielding the closed-form ghost-free \emph{coalescence
determinant} used in the companion papers.

\subsection{Companion code}

Two software deposits accompany this paper, providing two
different kinds of evidence.

The first deposit~\cite{Sniady2026formalization} contains a
machine-checked \emph{proof}: the discrete results of this
paper are formalized and verified in the Lean~4 proof
assistant, accompanied by a Python reference implementation
of the paper's algorithms. \ifextendedversion This
is the \emph{extended version} of the article:
\Cref{sec:lean-appendix} describes that formalization, and
notes throughout the text link each result to its Lean and
Python counterparts. \else An \emph{extended version} of
this article, distributed with the formalization, adds a
companion appendix describing it and notes linking each
result to its machine-checked counterpart. \fi

The second deposit~\cite{Sniady2026companion} contains
exact \emph{numerical tests} of the theorems: the
simulation and exact-enumeration code which was used to
discover the formulas of the present paper and of the
companion paper on
annihilation~\cite{Sniady2026annihilation}, and which
compares them, in exact arithmetic, against independent
brute-force enumeration.

\section{Setup}\label{sec:setup}

The coalescence formula holds for random walks on $\ZZ$, Brownian
motion on~$\RR$, and birth-death chains on arbitrary state spaces.
Rather than prove each case separately, we work with \emph{spacetime
graphs}---an abstraction that captures two structural properties
common to all settings:
\begin{enumerate}[label=(\roman*)]
\item \textbf{Planarity}: paths with swapped endpoints must cross,
  and non-adjacent particles cannot meet without an intermediate
  particle involved;
\item \textbf{Weight-preserving segment swap}: exchanging path
  segments at crossings preserves total weight.
\end{enumerate}
For discrete models (lattice walks), the spacetime graph is literal:
vertices are space-time points, edges are allowed transitions, and
edge weights are transition probabilities. For continuous processes
(Brownian motion, continuous-time jump processes),
the graph is a conceptual tool---the actual proof
uses measure-theoretic arguments (\Cref{sec:continuous}), but the
combinatorial structure is identical.

\subsection{Spacetime graphs}
\label{sec:setup-spacetime}

\fnote{%
  prose={The spacetime DAG: the substrate of vertices, edges, and weights.},
  lean={Coalescence.Geometry.Spacetime},
  py={geometry.dag.DAG}}%
\begin{definition}[Spacetime graph]\label{def:spacetime-graph}
A \emph{spacetime graph} is a directed acyclic graph $D = (V, E)$ equipped
with an edge weight function $w\colon E \to R$, where $R$ is a commutative
ring (e.g., $\ZZ$ for combinatorial counting, $\RR_{\geq 0}$ for
probabilities, or formal power series for generating functions).
The directed acyclic graph structure induces a
\emph{time ordering}: $u \lessdot v$ if there is a
directed path from $u$ to~$v$. This is a partial
order; for the
sign-reversing involution (\Cref{sec:involution}), we fix a linear extension $\le$
of $\lessdot$. The combinatorial proof works for any such extension. The
phrase ``first crossing'' means first in this order.
\end{definition}

\fnote{%
  prose={A path and its weight (the product of edge weights along it).},
  lean={Coalescence.Geometry.Spacetime.Path.weight},
  py={geometry.dag.DAG}}%
\begin{definition}[Paths and weights]\label{def:path-weight}
A \emph{path} from $x$ to $y$ is a sequence of vertices
$(v_0, v_1, \ldots, v_\ell)$ with $v_0 = x$, $v_\ell = y$, and each
$(v_i, v_{i+1}) \in E$. We identify a path with its set of vertices
$\{v_0, \ldots, v_\ell\}$, writing $v \in P$ when $v$ is a vertex of $P$;
thus for paths $A, C$ the intersection $A \cap C$ and union $A \cup C$ are
sets of vertices. The \emph{weight} of a path
is the product of
its edge weights:
\[
w(P) = \prod_{i=0}^{\ell-1} w(v_i \to v_{i+1}).
\]
The \emph{weight from $x$ to $y$} is the total weight of all directed
paths between them:
\[
W(x \to y) = \sum_{\substack{P \text{ path} \\ \text{from } x \text{ to } y}}
w(P).
\]
When the edge weights are the transition probabilities of a single
particle, $W(x \to y)$ is the probability that the particle, started
at $x$, reaches $y$---the transition probability written $p(x \to y)$
in the introduction. The general theory below works with the weight
$W$; the probabilistic reading is the special case.
\end{definition}

\begin{example}[Main examples]\label{ex:main-examples}
	
\ %
\begin{itemize}
\item \emph{Product spacetime} $\ZZ \times \ZZ_{\geq 0}$: vertices $(x, t)$
  with edges to $(x', t+1)$ for allowed transitions.
\item \emph{Checkerboard lattice}: vertices where $x + t$ is even, with
  edges to $(x \pm 1, t+1)$ (simple random walk, see \cref{fig:intro-21}).
\end{itemize}
\end{example}

\Cref{fig:genealogy} illustrates these concepts with a running example:
four particles coalescing on the lattice $\ZZ^2$ with North and East
steps. We will use this example throughout the paper.

\begin{figure}[t]
\centering

\tikzset{
	styleA/.style={colA, line width=2pt, solid,
		decorate, decoration={coil, segment length=3pt, amplitude=1pt}},
	styleB/.style={colB, line width=1.5pt, solid,
		decorate, decoration={zigzag, segment length=4pt, amplitude=1pt}},
	styleC/.style={colC, line width=1.5pt, solid,
		decorate, decoration={snake, segment length=5pt, amplitude=0.8pt}},
	styleD/.style={colBC, line width=2pt, solid},
	styleBC/.style={colBrown, line width=1.5pt, solid, double, double distance=1pt},
	styleP/.style={colP, line width=2.5pt, solid},
	styleGone/.style={colGone, line width=1.5pt, dashed, dash pattern={on 4pt off 2pt}},
	styleGtwo/.style={colGtwo, line width=1.5pt, dotted, dash pattern={on 1pt off 2pt}},
	leafnode/.style={circle, fill, inner sep=2.5pt},
	coalnode/.style={circle, fill=black, inner sep=3pt},
	rootnode/.style={circle, fill=colP, inner sep=3pt},
	ghostend/.style={circle, draw, line width=1pt, fill=white, inner sep=2pt},
}

\begin{tikzpicture}[scale=0.24]
	\begin{scope}
		\clip (-4.5,1.5) rectangle (28.5, 24.5);
		\draw[gray!30, thin, step=2] (-5, 1) grid (30, 27);
	\end{scope}

	\node[left, fill=white] at (-0.2,14) {$x_{I_1}$};
	\node[left, fill=white] at (1.8,10) {$x_{I_2}$};
	\node[left, fill=white] at (3.8,6) {$x_{I_3}$};
	\node[left, fill=white] at (5.8,4) {$x_{I_4}$};
	
	\node[below right=0.2, fill=white] at (8,12) {$c_3$};
	\node[below right=0.2, fill=white] at (12,16) {$c_2$};
	
	\node[above, fill=white] at (16,22.5) {$y_{[1,4)}$};
	\node[right, fill=white] at (20.3,20) {$y_2$};
	\node[right, fill=white] at (24.3,18) {$y_3$};
	\node[right, fill=white] at (24.3,12) {$y_{[4,5)}$};

	\draw[styleGone] (12,16) -- (13,16) -- (14,16) -- (15,16) -- (16,16) -- (17,16)
	-- (18,16) -- (19,16) -- (20,16) -- (20,17) -- (20,18) -- (20,19)
	-- (20,20);

	\draw[styleGtwo] (8,12) -- (8,13) -- (8,14) -- (8,15) -- (8,16) -- (8,17) -- (8,18)
	-- (9,18) -- (10,18) -- (11,18) -- (12,18) -- (13,18)
	-- (14,18) -- (15,18) -- (16,18) -- (17,18) -- (18,18) -- (19,18)
	-- (20,18) -- (21,18) -- (22,18) -- (23,18) -- (24,18);

	\draw[styleA] (0,14) -- (0,15) -- (0,16) -- (1,16) -- (2,16) -- (3,16) -- (4,16)
	-- (5,16) -- (6,16) -- (7,16) -- (8,16) -- (9,16) -- (10,16)
	-- (11,16) -- (12,16);
	\draw[styleB] (2,10) -- (2,11) -- (2,12) -- (3,12) -- (4,12) -- (5,12)
	-- (6,12) -- (7,12) -- (8,12);
	\draw[styleC] (4,6) -- (5,6) -- (6,6) -- (7,6) -- (8,6) -- (8,7) -- (8,8)
	-- (8,9) -- (8,10) -- (8,11) -- (8,12);
	\draw[styleBC] (8,12) -- (9,12) -- (10,12) -- (11,12) -- (12,12)
	-- (12,13) -- (12,14) -- (12,15) -- (12,16);
	\draw[styleP] (12,16) -- (12,17) -- (12,18) -- (12,19) -- (12,20) -- (12,21)
	-- (12,22) -- (13,22) -- (14,22) -- (15,22) -- (16,22);
	\draw[styleD] (6,4) -- (7,4) -- (8,4) -- (9,4) -- (10,4) -- (11,4) -- (12,4)
	-- (13,4) -- (14,4) -- (15,4) -- (16,4) -- (17,4) -- (18,4)
	-- (19,4) -- (20,4) -- (21,4) -- (22,4) -- (23,4) -- (24,4)
	-- (24,5) -- (24,6) -- (24,7) -- (24,8) -- (24,9) -- (24,10)
	-- (24,11) -- (24,12);

	\tikzset{
		myarrow/.style={-{Stealth[length=8pt, width=6pt]}, line width=1.2pt},
		bigarrow/.style={-{Stealth[length=12pt, width=10pt]}, line width=1.5pt},
	}
	\draw[myarrow, colGone] (14.5,16) -- (16,16);
	\draw[myarrow, colGone] (20,17.5) -- (20,19);
	\draw[myarrow, colGtwo] (8,13.5) -- (8,15);
	\draw[myarrow, colGtwo] (14.5,18) -- (16,18);
	\draw[bigarrow, colA] (4,16) -- (6.5,16);
	\draw[bigarrow, colB] (5.8,12) -- (6,12);
	\draw[bigarrow, colC] (8,8.8) -- (8,9);
	\draw[bigarrow, colBrown] (9,12) -- (11,12);
	\draw[bigarrow, colBrown] (12,13) -- (12,15);
	\draw[myarrow, colP] (12,18) -- (12,19.5);
	\draw[myarrow, colP] (13.5,22) -- (15,22);
	\draw[bigarrow, colBC] (12,4) -- (14,4);
	\draw[bigarrow, colBC] (24,8) -- (24,10);

	\node[leafnode, colA] at (0,14) {};
	\node[leafnode, colB] at (2,10) {};
	\node[leafnode, colC] at (4,6) {};
	\node[leafnode, colBC] at (6,4) {};

	\node[coalnode, fill=colBrown] at (8,12) {};
	\node[coalnode, fill=colP] at (12,16) {};

	\node[rootnode] at (16,22) {};
	\node[ghostend, draw=colGone] at (20,20) {};
	\node[ghostend, draw=colGtwo] at (24,18) {};
	\node[leafnode, colBC] at (24,12) {};

	\begin{scope}[shift={(30,6)}]
		\node[anchor=west, font=\small\bfseries] at (0,14) {Particles};
		\draw[styleA] (0,12.5) -- (2,12.5); \node[anchor=west] at (2.5,12.5) {\small $I_1=[1,2)$};
		\draw[styleB] (0,11) -- (2,11); \node[anchor=west] at (2.5,11) {\small $I_2=[2,3)$};
		\draw[styleC] (0,9.5) -- (2,9.5); \node[anchor=west] at (2.5,9.5) {\small $I_3=[3,4)$};
		\draw[styleD] (0,8) -- (2,8); \node[anchor=west] at (2.5,8) {\small $I_4=[4,5)=H'$};
		\draw[styleBC] (0,6.5) -- (2,6.5); \node[anchor=west] at (2.5,6.5) {\small $[2,4)$};
		\draw[styleP] (0,5) -- (2,5); \node[anchor=west] at (2.5,5) {\small $[1,4)=H$};
		\node[anchor=west, font=\small\bfseries] at (0,3) {Ghosts};
		\draw[styleGone] (0,1.5) -- (2,1.5); \node[anchor=west] at (2.5,1.5) {\small $2$};
		\draw[styleGtwo] (0,0) -- (2,0); \node[anchor=west] at (2.5,0) {\small $3$};
	\end{scope}
\end{tikzpicture}

\caption{\textbf{Running example: coalescence pattern $3{+}1$.}
	A~\emph{performance} records the collision structure on a spacetime graph---here
	the lattice $\ZZ^2$ with North/East steps. Four particles start at
	$x_{I_1}$, $x_{I_2}$, $x_{I_3}$, $x_{I_4}$ (leaves, colored dots). Particles
	$I_2$ and $I_3$ meet at $c_3$, merging into $[2,4)$; then $I_1$ and
	$[2,4)$ meet at $c_2$, forming heir $H=[1,4)$ which ends at the root
	$y_{[1,4)}$. Particle $I_4$ does not coalesce; it is heir $H'=[4,5)$.
	Ghost paths (dashed/dotted) emanate from merger points:
	ghost~$2$ is born at $c_2$, ghost~$3$ at~$c_3$. Note that ghost~$3$ crosses
	through particle $I_1$---ghosts are non-interacting.
	See \Cref{fig:boundary} for the labeling scheme.}
\label{fig:genealogy}

\end{figure}

\begin{figure}[t]

\begin{tikzpicture}[scale=0.9]

\def\axislen{6}
\def\height{3.5}
\def\rectheight{0.35}

\draw[->, thick] (-0.3, 0) -- (\axislen, 0);
\foreach \x in {0,1,2,3,4,5} {
	\draw (\x, -0.1) -- (\x, 0.1);
	\node[below] at (\x, -0.2) {\small $\x$};
}

\draw[->, thick] (-0.3, \height) -- (\axislen, \height);
\foreach \x in {0,1,2,3,4,5} {
	\draw (\x, \height-0.1) -- (\x, \height+0.1);
	\node[above] at (\x, \height+0.2) {\small $\x$};
}

\draw[fill=colA!30, draw=colA, thick, rounded corners=3pt]
    (1.05, 0.15) rectangle (1.95, 0.15+2*\rectheight);
\node[circle, fill=colA, inner sep=1.5pt] (I1circ) at (1.2, 0.15+\rectheight) {};
\node at (1.55, 0.15+\rectheight) {$I_1$};

\draw[fill=colB!30, draw=colB, thick, rounded corners=3pt]
    (2.05, 0.15) rectangle (2.95, 0.15+2*\rectheight);
\node[circle, fill=colB, inner sep=1.5pt] (I2circ) at (2.2, 0.15+\rectheight) {};
\node at (2.55, 0.15+\rectheight) {$I_2$};

\draw[fill=colC!30, draw=colC, thick, rounded corners=3pt]
    (3.05, 0.15) rectangle (3.95, 0.15+2*\rectheight);
\node[circle, fill=colC, inner sep=1.5pt] (I3circ) at (3.2, 0.15+\rectheight) {};
\node at (3.55, 0.15+\rectheight) {$I_3$};

\draw[fill=colBC!30, draw=colBC, thick, rounded corners=3pt]
    (4.05, 0.15) rectangle (4.95, 0.15+2*\rectheight);
\node[circle, fill=colBC, inner sep=1.5pt] (I4circ) at (4.2, 0.15+\rectheight) {};
\node at (4.55, 0.15+\rectheight) {$I_4$};

\draw[fill=colP!30, draw=colP, thick, rounded corners=3pt]
    (1.05, \height-0.15-2*\rectheight) rectangle (3.95, \height-0.15);
\node[circle, fill=colP, inner sep=1.5pt] (Hcirc) at (1.2, \height-0.15-\rectheight) {};
\node at (2.5, \height-0.15-\rectheight) {$H$};

\draw[fill=colBC!30, draw=colBC, thick, rounded corners=3pt]
    (4.05, \height-0.15-2*\rectheight) rectangle (4.95, \height-0.15);
\node[circle, fill=colBC, inner sep=1.5pt] (Hpcirc) at (4.2, \height-0.15-\rectheight) {};
\node at (4.55, \height-0.15-\rectheight) {$H'$};

\node[circle, draw=colGone, fill=white, line width=1.5pt, inner sep=2pt] (g2circ)
    at (2, \height-0.15-2*\rectheight-0.25) {};
\node[right, colGone, black] at (2, \height-0.15-2*\rectheight-0.4) {\small $2$};

\node[circle, draw=colGtwo, fill=white, line width=1.5pt, inner sep=2pt] (g3circ)
    at (3, \height-0.15-2*\rectheight-0.25) {};
\node[right, colGtwo, black] at (3, \height-0.15-2*\rectheight-0.4) {\small $3$};

\draw[black, thick, ->] (I1circ) to[out=90, in=-90] (Hcirc);
\draw[black, thick, ->] (I2circ) to[out=90, in=-120] (g3circ);
\draw[black, thick, ->] (I3circ) to[out=90, in=-90] (Hpcirc);
\draw[black, thick, ->] (I4circ) to[out=90, in=-120] (g2circ);

\end{tikzpicture}

\caption{\textbf{Interval labeling and the final state.}
Initial particles (\emph{actors}) are labeled by unit intervals; 
final entities (\emph{roles}) by intervals (heirs)
or junction points (ghosts). This diagram shows the same example as
\Cref{fig:genealogy}. Here $n=4$ (\Cref{ex:final-state}): initial
particles $\Actors = \{I_1, I_2, I_3, I_4\}$ and final entities
$\Roles = \{H, 2, 3, H'\}$. Heirs $H = [1,4)$ and $H' = [4,5)$;
ghosts appear at junctions $2$ and~$3$.
The $\min$ function is indicated by the small dots: each interval $[a,b)$
has a dot at position~$a$ (its left endpoint), so $\min$ reads off
the horizontal coordinate of the dot.
Arrows show one bijection $\pi$: $I_1 \mapsto H$,\quad $I_2 \mapsto 3$,\quad
$I_3 \mapsto H'$,\quad $I_4 \mapsto 2$ between the initial particles
and the final entities.
Under $\min$, this becomes the permutation
$1 \mapsto 1$,\quad $2 \mapsto 3$,\quad $3 \mapsto 4$,\quad $4 \mapsto 2$---the
$3$-cycle $(2\ 3\ 4)$ with sign~$+1$.}
\label{fig:boundary}

\end{figure}

\subsection{Planarity}
\label{sec:setup-planarity}

The classical LGV lemma counts \emph{non-intersecting} paths, so it only
needs one geometric condition: paths with swapped endpoints must cross.
For interacting particles, we need a second condition controlling
\emph{which} particles can collide.

\fnote{%
  prose={The source/target sets $\mathcal{X}$, $\mathcal{Y}$; the boundary
    data the theorem is stated over.},
  lean={Coalescence.Boundary},
  py={valuations.context.Boundary}}%
\begin{definition}[Source and target sets]\label{def:source-target}
The \emph{source set} $\mathcal{X} \subseteq V$ and \emph{target set}
$\mathcal{Y} \subseteq V$ are each equipped with a linear order $\prec$.
\end{definition}

\fnote{%
  prose={Planarity P1 (ordered cross-pairing paths must intersect) and P2
    (a collision of the outer two of three ordered paths forces the middle
    one in). The only geometric hypotheses.},
  lean={Coalescence.Geometry.P1, Coalescence.Geometry.P2}}%
\begin{definition}[Planar configuration]\label{def:planar}
The pair $(\mathcal{X}, \mathcal{Y})$ is \emph{planar} if:
\begin{enumerate}[label=(P\arabic*)]
\item \label{itm:P1-crossing} \textbf{Crossing property.}
  For $x \preceq x'$ in $\mathcal{X}$ and $y' \preceq y$ in $\mathcal{Y}$
  (targets swapped), every path from $x$ to $y$ intersects every path
  from $x'$ to $y'$.
\item \label{itm:P2-consecutive} \textbf{Consecutive collision property.}
  Let $A, B, C$ be paths starting at $x \preceq x' \preceq x''$ in $\mathcal{X}$
  and ending in $\mathcal{Y}$. If $v \in A \cap C$, then there is a vertex
  $w \le v$ with $w \in B \cap (A \cup C)$.
\end{enumerate}
\end{definition}

The crossing property~\ref{itm:P1-crossing} is Stembridge's
$D$-compatibility~\cite{Stembridge1990}: when paths have swapped
endpoints, they must meet somewhere (see \Cref{fig:planarity-p1}).
Stembridge's key insight was
formulating this condition abstractly for general acyclic digraphs,
rather than relying on specific lattice geometry. This abstraction is
what enables generalizations to new settings---including ours.

The consecutive collision property~\ref{itm:P2-consecutive} is our
addition for interacting particles. It ensures that non-adjacent
particles cannot collide without involving intermediate
particles---a physical constraint reflecting that particles on a line
cannot jump over each other. Classical LGV does not need this because
it forbids all crossings; we need it because collisions are allowed
but must respect the spatial ordering (see \Cref{fig:planarity-p2}).

\tikzset{
  nosign radius/.initial=0.38,
  nosign/.pic={
    \draw[red, line width=1.3pt] (0,0)
      circle[radius={\pgfkeysvalueof{/tikz/nosign radius}}];
    \draw[red, line width=1.3pt]
      (-135:{\pgfkeysvalueof{/tikz/nosign radius}})
      -- (45:{\pgfkeysvalueof{/tikz/nosign radius}});
  },
}

\begin{figure}[t]
\centering
\captionsetup[subfigure]{justification=centering}

\subfloat[]{%
\begin{tikzpicture}[scale=0.6,
    vertex/.style={circle, fill, inner sep=1.5pt}]

\draw[gray, thick] (0, 0) -- (6, 0);
\draw[gray, thick] (0, 5) -- (6, 5);
\node[right] at (6.1, 0) {$\mathcal{X}$};
\node[right] at (6.1, 5) {$\mathcal{Y}$};

\node[vertex, colB] (a0) at (1, 0) {};   \node[below] at (a0) {$A$};
\node[vertex, colB] at (5, 5) {};
\node[vertex, colA] (b0) at (5, 0) {};   \node[below] at (b0) {$B$};
\node[vertex, colA] at (1, 5) {};

\draw[particleA] (5, 0) -- (1, 5);
\draw[particleB] (1, 0) -- (2.4, 1.75);
\draw[white, line width=6pt]
  (2.4, 1.75) to[out=95, in=175] (3.0, 2.95) to[out=-5, in=140] (3.6, 3.25);
\draw[particleB]
  (2.4, 1.75) to[out=95, in=175] (3.0, 2.95) to[out=-5, in=140] (3.6, 3.25);
\draw[particleB] (3.6, 3.25) -- (5, 5);

\pic at (4.5, 2.35) {nosign};

\node[font=\scriptsize] at (1.75, 2.95) {bridge};

\end{tikzpicture}%
\label{fig:planarity-p1}%
}
\hfill
\subfloat[]{%
\begin{tikzpicture}[scale=0.6,
    vertex/.style={circle, fill, inner sep=1.5pt},
    collision/.style={circle, fill=black, inner sep=1.8pt}]

\draw[gray, thick] (0, 0) -- (6.4, 0);
\draw[gray, thick] (0, 5) -- (6.4, 5);
\node[right] at (6.5, 0) {$\mathcal{X}$};
\node[right] at (6.5, 5) {$\mathcal{Y}$};

\node[vertex, colA] (a0) at (1, 0) {};   \node[below] at (a0) {$A$};
\node[vertex, colB] (b0) at (3, 0) {};   \node[below] at (b0) {$B$};
\node[vertex, colC] (c0) at (5, 0) {};   \node[below] at (c0) {$C$};

\node[vertex, colA] at (1, 5) {};
\node[vertex, colB] at (3, 5) {};
\node[vertex, colC] at (5, 5) {};

\node[collision] (v) at (3, 2.5) {};
\node[left=2pt] at (v) {$v$};

\node[collision] (w) at (3.95, 3.6) {};
\node[left=2pt] at (w) {$w$};

\draw[particleA] (1, 0) -- (v) -- (1, 5);

\draw[particleC] (5, 0) -- (4.45, 0.69);     %
\draw[particleC] (3.95, 1.31) -- (v);        %
\draw[particleC] (v) -- (w) -- (5, 5);        %

\draw[particleB] (3, 0) -- (3.9, 0.8);       %
\draw[white, line width=6pt]
  (3.9, 0.8) to[out=60, in=180] (4.45, 1.5) to[out=0, in=120] (5.05, 1.2);
\draw[particleB]
  (3.9, 0.8) to[out=60, in=180] (4.45, 1.5) to[out=0, in=120] (5.05, 1.2);
\draw[particleB]
  (5.05, 1.2) to[out=40, in=-85] (5.5, 2.5) to[out=95, in=-25] (w);
\draw[particleB] (w) -- (3, 5);              %

\pic at (6.25, 1.45) {nosign};

\node[font=\scriptsize] at (4.5, 2.05) {bridge};

\end{tikzpicture}%
\label{fig:planarity-p2}%
}

\caption{\textbf{The two planarity conditions} (\Cref{def:planar}), each shown
as a \emph{forbidden} configuration; time runs upward, with sources in
$\mathcal{X}$ (bottom) and targets in $\mathcal{Y}$ (top).
\textbf{(a)~Crossing property~\ref{itm:P1-crossing}.} Paths $A$ and $B$ have
swapped endpoints---$A$ runs from the left source to the right target and $B$
vice versa---yet reach them \emph{without} intersecting, one bridging over the
other. Condition~\ref{itm:P1-crossing} forbids this: paths with swapped
endpoints must cross.
\textbf{(b)~Consecutive collision property~\ref{itm:P2-consecutive}.} Paths
$A$ and $C$ intersect at the shared vertex $v \in A \cap C$.
The intermediate path~$B$ meets $A\cup C$ only at the vertex marked~$w$,
which lies \emph{above}~$v$.
Condition~\ref{itm:P2-consecutive} forbids this: $B$ must share a vertex
with $A$ or~$C$ at or before~$v$.}
\label{fig:planarity}

\end{figure}

\subsection{Interval labels}
\label{sec:setup-coalescence}

\subsubsection{Actors, heirs, and ghosts}

Fix source vertices $x_1 \preceq \cdots \preceq x_n$ in
$\mathcal{X}$. The coalescence structure is tracked using interval
labels (see \Cref{fig:boundary}).

\fnote{%
  prose={An interval: a contiguous run of actors.},
  lean={Coalescence.Interval},
  py={combinatorics.domain.Interval}}%
\begin{definition}[Interval labeling]\label{def:interval-labels}
The initial particle (also called \emph{actor}) starting at $x_j$
is labeled by the unit half-open
interval $I_j = [j, j+1)$; we also write $x_{I_j}$ or $x_I$ for the
starting position of actor~$I$. The set of actors is
$\Actors = \{I_1, \ldots, I_n\}$. The \emph{junction points} are
$\Junctions = \{2, 3, \ldots, n\}$---the shared endpoints between
consecutive intervals. Throughout, interval labels are half-open, so
that the labels of distinct active particles are disjoint.

When particles $[a, b)$ and $[b, c)$ collide, both disappear and two
new entities are born: an \emph{heir} labeled $[a, c)$ (the union),
and a \emph{ghost} labeled by the junction point~$b$ (the dissolved boundary).
In particular, particles that share a starting position
coalesce instantly: the coalescence occurs at time zero,
and the heir and ghost emerge from the shared vertex.
\end{definition}

This labeling lets us trace origins: a particle labeled $[a,c)$
arose from the merger of all initial particles $I_j$ with
$a \leq j < c$. Similarly, ghost~$b$ records which coalescence created
it: the one between the particles on either side of junction~$b$.

Intuition: the heir $[a,c)$ is ``\emph{the owner of the interval $[a,c)$},''
and ghost~$b$ is ``\emph{attached to the boundary $b$ that used to
separate the particles on its left from those on its right}.''

\subsubsection{Label order}

The \emph{label order} $\ilo$ (read ``label-less-than'') orders
intervals and junctions by position, by the following cases:
\begin{itemize}
\item interval before junction: $[a,b) \ilo g$ iff $b \leq g$;
\item junction before interval: $g \ilo [a,b)$ iff $g \leq a$;
\item interval before interval: $[a,b) \ilo [a',b')$ iff $b \leq a'$;
\item junction before junction: $g \ilo g'$ iff $g < g'$.
\end{itemize}
We write $g \igo I$ (read ``label-greater'') for the reverse relation
$I \ilo g$. For example, $I_1 \ilo 2 \ilo I_2 \ilo 3 \ilo I_3$.
This is only a partial order: a junction $g$ and an interval $[a,b)$
with $a < g < b$ are $\ilo$-incomparable, since neither $b \leq g$ nor
$g \leq a$ holds. In particular, a ghost is never comparable with its
own heir.

\subsubsection{$\min$-labeling}

The function $\min$ reads off the smallest element of an entity,
identifying intervals and junctions with $[n] = \{1, \ldots, n\}$:
an interval maps to its left endpoint, $[a,b) \mapsto a$, and a
junction is its own minimum, $k \mapsto k$.
On the role set $\Roles$ of final entities (the heirs and their ghosts,
\Cref{sec:setup-final})---where each ghost junction lies strictly inside
its heir, so distinct entities have distinct minima---$\min$ is
injective, and the resulting total order, the \emph{$\min$-order}, is a
linear extension of the label order $\ilo$: it places each heir
immediately before the ghosts it contains, breaking the incomparability
noted above in favor of the heir.

\subsection{Initial state}
\label{sec:setup-initial}

The \emph{initial state} fixes the actors and where they start: the
$n$ particles $\Actors = \{I_1, \ldots, I_n\}$
(\Cref{def:interval-labels}) begin at the source positions
$x_1 \preceq \cdots \preceq x_n$ in~$\mathcal{X}$, with
$x_i$ the starting position of actor~$I_i$.

\subsection{Final state}
\label{sec:setup-final}

The \emph{final state} $\FinalState$ specifies:
\begin{itemize}
\item The \emph{ghost set} $\Ghosts \subseteq \Junctions$: junctions where
  coalescence occurred;
\item The \emph{heir set} $\Heirs$: maximal half-open intervals
  $[a,b) \subseteq [1, n+1)$ not containing any
  junction from~$\Junctions\setminus \Ghosts$ in their interior;
\item The \emph{role set} $\Roles = \Heirs \cup \Ghosts$: the final entities;
\item Final positions $y_f \in \mathcal{Y}$ for each entity $f \in \Roles$.
\end{itemize}
We assume heir positions respect label order: if $H \ilo H'$, then
$y_H \preceq y_{H'}$. This is not a restriction: by the crossing
property~\ref{itm:P1-crossing}, any physically realizable coalescence
pattern must satisfy this ordering. Writing $k = |\Heirs|$ for the
number of heirs and $m = |\Ghosts|$ for the number of ghosts, the
cardinality is $|\Roles| = k + m = n$.

\fnote{%
  prose={The heir map (junction to interval); Lean's paper-aligned match is
    \texttt{containingInterval}.},
  lean={Coalescence.IntervalPartition.containingInterval}}%
\begin{definition}[Heir function]\label{def:heir}
For ghost $g \in \Ghosts$, we write $\heir(g)$ for the unique heir
interval containing junction $g$: $\heir(g) = [a, b)$ where $a < g < b$.
\end{definition}

\begin{example}[Coalescence pattern $3{+}1$]\label{ex:final-state}
Consider $n = 4$ initial particles (\Cref{fig:boundary,fig:genealogy}):
\[
\Actors = \{I_1,\, I_2,\, I_3,\, I_4\}.
\]
Suppose $I_2$ and $I_3$ coalesce (ghost at junction~$3$), then the merged
particle $[2,4)$ coalesces with $I_1$ (ghost at junction~$2$), while
$I_4$ remains separate. The final state has:
\[
\Ghosts = \{2,\, 3\}, \quad
\Heirs = \{[1,4),\, [4,5)\}, \quad
\Roles = \{[1,4),\, 2,\, 3,\, [4,5)\}.
\]
Two heirs and two ghosts. Under the identification with $[4]$:
\begin{itemize}
\item $\Actors = \{I_1,\, I_2,\, I_3,\, I_4\} 
\longleftrightarrow \{1, 2, 3, 4\}$
      (particle $I_j$ maps to $j$);
\item $\Roles = \{[1,4),\, 2,\, 3,\, [4,5)\} \longleftrightarrow \{1,\, 2,\, 3,\, 4\}$
      (heir $[1,4) \mapsto 1$, heir $[4,5) \mapsto 4$, ghosts map to
      themselves).
\end{itemize}
Any bijection $\pi\colon \Actors \to \Roles$ is thus a permutation
of $\{1, 2, 3, 4\}$.
\end{example}

The ghost set $\Ghosts$ determines a composition
$c_1{+}\cdots{+}c_k = n$, where $c_j$ counts the initial
particles that merge into the $j$th heir. In the example above,
$\Ghosts = \{2, 3\}$ gives composition~$3{+}1$;
\Cref{fig:matrix-structure} illustrates the $2{+}2$ pattern.

\subsection{Performances}
\label{sec:setup-performances}

The classic Lindström--Gessel--Viennot lemma abstracts non-colliding random
walks into a graph-theoretic problem: counting tuples of
vertex-disjoint paths. We seek an analogous abstraction for
coalescing particles. The dynamics---particles moving, meeting,
merging---is replaced by a static combinatorial object: a forest of
genealogy trees recording which particles merged, together with ghost
paths recording where each ghost traveled (\Cref{fig:genealogy}).
This abstraction is the \emph{performance}.

\fnote{%
  prose={Not formalized: genealogy structures enter Lean only through the
    path weights they contribute; the forest itself is paper-only. See
    \Cref{sec:lean-scope}.}}%
\begin{definition}[Genealogy tree]\label{def:genealogy}
A \emph{genealogy} for an heir $H \in \Heirs$ records which initial
particles merged to form $H$. It is an oriented tree $T$ embedded in
$D$ with:
\begin{itemize}
\item \textbf{Leaves}: starting positions $x_I$ for particles $I$
      that merge into $H$;
\item \textbf{Internal vertices}: merger points where two or more
      particles merge and continue as one;
\item \textbf{Root}: the final position $y_H$ of the heir;
\item \textbf{Edges}: directed paths in $D$---particle trajectories.
\end{itemize}
Each vertex $v$ of $T$ carries a \emph{label} $I_v$: the union of the
initial intervals of all particles that have merged by $v$ (an interval,
by \Cref{prop:consecutivity}).
When two initial particles share a starting vertex, that
vertex serves as both leaf and internal vertex; the
incoming paths have length zero.
\end{definition}

\fnote{%
  prose={Not formalized: the forest structure never appears in the
    machine-checked statements; see \Cref{sec:lean-scope}.}}%
\begin{definition}[Genealogy forest]\label{def:genealogy-forest}
A \emph{genealogy forest} is a collection $\mathcal{T} = \{T_H : H \in
\Heirs\}$ of genealogies satisfying:
\begin{itemize}
\item \textbf{Partition}: Every initial particle belongs to exactly
      one tree;
\item \textbf{Non-intersection}: Trees are vertex-disjoint.
\end{itemize}
\end{definition}

\fnote{%
  prose={The merged label at each vertex is an interval; for $V_D$ this is
    the consecutive-collision property, discharged from P2.},
  lean={Coalescence.Valuation.consecutive_collision_property},
  py={valuations.protocol}}%
\begin{proposition}[Consecutivity]\label{prop:consecutivity}
	Under the planarity assumption (\Cref{def:planar}), the label $I_v$ at
	each vertex is an interval: if particles $A$ and $C$ have both reached
	$v$, then so has every particle $B$ between them.
\end{proposition}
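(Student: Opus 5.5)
We argue by strong induction on the position of $v$ in the fixed linear extension $\le$ of the time order, proving simultaneously that every label $I_v$ is an interval. Each trajectory in the genealogy is a path in $D$. For each initial particle $J$, let $P_J$ be its \emph{trajectory path}: start at $x_J$ and follow the edges of the genealogy tree containing $J$ up to the root. By the non-intersection clause of \Cref{def:genealogy-forest}, $J$ has reached $v$ exactly when $v \in P_J$ and $v$ lies in $J$'s tree. Fix particles $A \prec B \prec C$, that is, with sources $x_A \preceq x_B \preceq x_C$, and suppose both $A$ and $C$ have reached $v$. Then $v \in P_A \cap P_C$, and the paths $P_A, P_B, P_C$ start at ordered points of $\mathcal{X}$ and end in $\mathcal{Y}$. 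The consecutive collision property~\ref{itm:P2-consecutive} therefore gives a vertex $w \le v$ with $w \in P_B \cap (P_A \cup P_C)$.

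Suppose, say, $w \in P_A \cap P_B$. Here we use the rule that particles coalesce on contact. Two trajectory paths of the genealogy forest can share a vertex only if they lie in the same tree, since the trees are vertex-disjoint. Inside a tree, two trajectories that meet at $w$ have merged by $w$, and they follow a single common path from then on. Hence $B$ has merged with $A$ by time $w$, so $B$ lies in $A$'s tree, which is also $C$'s tree. Because both $P_A$ and $P_B$ pass through $w$ and then continue along the unique path to the root, and $v$ lies on $P_A$ after $w$ (as $w \le v$ and both are on $P_A$), $v$ lies on $P_B$ as well. Hence $B$ has reached $v$. The case $w \in P_C \cap P_B$ is symmetric.

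Since every particle strictly between two members of $I_v$ also belongs to $I_v$, the set of initial indices in $I_v$ is convex, so $I_v$ is an interval. The induction on $\le$ is only needed to make sense of the statement ``merged by $w$'' for earlier vertices, where labels are already known to be intervals.

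I expect the main obstacle to be the step that passes from ``$P_B$ meets $P_A$ at $w$'' to ``$B$ passes through $v$''. The difficulty is that \ref{itm:P2-consecutive} is phrased for arbitrary paths. The argument must therefore carefully use the genealogy structure, namely the tree property, unique continuation toward the root, and vertex-disjointness of distinct trees, to rule out the trajectory of $B$ touching $P_A$ at $w$ without merging there. If the definition allowed such non-merging contact, I would instead choose $w$ minimal and invoke the coalescence-on-contact convention built into the performance model.
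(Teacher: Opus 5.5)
Your proof is correct and follows essentially the same route as the paper: apply \ref{itm:P2-consecutive} to the full leaf-to-root trajectories of $A$, $B$, $C$, use vertex-disjointness of the trees to place $B$ in the common tree, merged with $A$ (or $C$) at $w$, and use that $w \le v$ with both on the directed path $P_A$ forces $w$ to precede $v$ there, so $B$ follows that path to $v$. The strong induction on $\le$ is unnecessary, since ``merged by $w$'' makes sense without knowing that earlier labels are intervals, but it does no harm.
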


\begin{proof}
Suppose $A$ and $C$ both reach $v$, with starting positions
$x_A \preceq x_C$, and let $B$ be any particle with
$x_A \preceq x_B \preceq x_C$. By the partition property of the
genealogy forest, each of $A$, $B$, $C$ lies on a full trajectory
running from its leaf to the root of its tree. We must show that
$B$ also reaches $v$.

Since $A$ and $C$ both pass through $v$, we have $v \in A \cap C$;
as the trees are vertex-disjoint, $A$ and $C$ then lie in the same
tree and have merged into a common entity by $v$. The starting
order $x_A \preceq x_B \preceq x_C$ places the three trajectories in
the configuration of the consecutive collision
property~\ref{itm:P2-consecutive}, which provides a vertex $w$, no
later than $v$, that lies on $B$ and on $A\cup C$. Vertex-%
disjointness again applies: sharing the vertex~$w$ forces $B$ into
the same tree, merged with $A$ or $C$ by~$w$; say $w \in A$ (the case
$w \in C$ is symmetric). Now $w$ and $v$ both lie on the directed
path~$A$, hence are comparable in the time order~$\lessdot$; since
$\le$ extends~$\lessdot$, the relation $w \le v$ forces $w \lessdot v$
or $w = v$. The portion of~$A$ from~$w$ onward therefore passes
through~$v$, and~$B$, having joined~$A$ at~$w$, follows it to~$v$.

Hence every particle between $A$ and $C$ reaches $v$, and the
label $I_v$---the union of the initial intervals of the particles
merged by $v$---is an interval.
\end{proof}

\fnote{%
  prose={Ghost paths in the spacetime graph.},
  lean={Coalescence.Geometry.Spacetime.Path}}%
\begin{definition}[Ghost paths]\label{def:ghost-paths}
For each ghost $g \in \Ghosts$, let $c_g$ be the internal vertex where
junction $g$ was dissolved---the unique earliest vertex $v$ of the
genealogy forest whose label $I_v$ contains $g$ in its interior, so that
the particles on both sides of~$g$ have merged by~$v$; a simultaneous
multi-way merger may serve as $c_g$ for several ghosts at once. The
\emph{ghost path} $\Gamma_g$ is a directed path in $D$ from $c_g$ to
$y_g$. Ghost paths are non-interacting: they may pass through any
vertices freely.
\end{definition}

\fnote{%
  prose={A performance: a sequence of merges (a performance hop chain).},
  lean={Coalescence.PerformanceHopChain},
  py={combinatorics.chains.PerformanceHopChain}}%
\begin{definition}[Performance]\label{def:performance}
A \emph{performance} $\perf$ consists of:
\begin{itemize}
\item a genealogy forest $\mathcal{T}$;
\item a ghost path $\Gamma_g$ for each ghost $g \in \Ghosts$.
\end{itemize}
The \emph{weight} of the performance is the product of all edge weights:
\[
w(\perf) = \prod_{\text{tree edges } e} w(e) \cdot
           \prod_{g \in \Ghosts} w(\Gamma_g).
\]
\end{definition}

\subsection{Ghost sign}
\label{sec:setup-sign}

\fnote{%
  prose={The ghost L/R sign $\varepsilon$.},
  lean={Coalescence.GhostSign},
  py={combinatorics.domain.GhostSign}}%
\begin{definition}[Ghost sign]\label{def:ghost-sign}
The \emph{ghost sign} $\varepsilon\colon \Ghosts \to \{+1, -1\}$ is:
\[
\varepsilon_g = \begin{cases}
+1 & \text{if } y_g \preceq y_{\heir(g)} \text{ (ghost left of heir)}, \\
-1 & \text{if } y_{\heir(g)} \preceq y_g \text{ (ghost right of heir)}.
\end{cases}
\]
In the special case $y_g = y_{\heir(g)}$, either sign may be
chosen.
\end{definition}

Two distinct orderings appear in our setup (\Cref{fig:boundary}).
Actors and final entities carry \emph{labels}---intervals and
junctions ordered by~$\ilo$. But entities also have \emph{spatial
positions}---vertices in the spacetime graph ordered by~$\prec$.

In the classical LGV lemma, these two orderings coincide: the final
entities admit a single canonical left-to-right indexing, in which
label order and spatial order agree. The permutation~$\pi$ then has
the same meaning whether interpreted as permuting labels or as
crossing paths.

With ghosts, this alignment breaks. Heirs keep a canonical order---they
are sorted by spatial position, matching their label order---but a
ghost might end up to the left or the right of its heir, regardless of
their labels. The ghost sign~$\varepsilon$ captures precisely this
discrepancy.

\subsection{Total weight}
\label{sec:setup-total-weight}

Given fixed initial and final state, the total weight
\[ Z= \sum_{\perf} w(\perf)\]
is defined as the sum of weights over all performances with
this prescribed initial and final state.
Our goal in this paper is to give a closed formula for $Z$.

\section{The main result and an overview of its proof}
\label{sec:formula}
\subsection{The matrix}
\label{sec:formula-matrix}

Fix the initial and the final state.
Define the $n \times n$ matrix $M$ with:
\begin{itemize}
\item rows indexed by particles $I \in \Actors$ (in label order);
\item columns indexed by final entities $f \in \Roles$ (in
  $\min$-order: each heir followed by its attached ghosts);
\item entries:
\[
M_{I,f} = \begin{cases}
\phantom{-}W(x_I \to y_H) & \text{if } f = H \text{ is an heir}, 
   \\[0.3em]
-[\varepsilon_g=-1] \cdot W(x_I \to y_g) & \text{if } f = g \text{ is a ghost and }
I \ilo g,
\\[0.3em]
\phantom{-}[\varepsilon_g=+1] \cdot W(x_I \to y_g) & \text{if } f = g \text{ is a ghost and }
  I \igo g;
\end{cases}
\]
\end{itemize}
see \Cref{fig:matrix-structure} for an example.

\begin{remark}[Equivalence of notations]
Under the $\min$ identification
(\Cref{sec:setup-coalescence}), the label order condition $I
\ilo g$ is equivalent to $\min I < \min g$. Since $\min I$
and $\min g = g$ are the indexes of the row and column,
respectively, this recovers the coordinate formulation from
the introduction (\Cref{sec:intro-results}).
\end{remark}

\subsection{The theorem}
\label{sec:formula-theorem}

\fnote{%
  prose={The headline identity $Z=\det M$; the Lean proof factors it
    through three statements of decreasing generality.},
  lean={main_theorem_layer1, main_theorem_layer2, main_theorem_layer3},
  py={tests/test_determinant_identity.py}}%
\begin{theorem}[Coalescence formula with ghosts]\label{thm:coalescence}
Assume that the pair $(\mathcal{X}, \mathcal{Y})$ is planar
(\Cref{def:planar}). The total weight of all coalescence
performances with prescribed initial and final state is the
determinant
\begin{equation}
	\label{eq:coalescence-formula}
	Z = \det M.
\end{equation}
\end{theorem}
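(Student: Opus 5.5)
We follow the Lindstr\"om--Gessel--Viennot template: expand both sides as sums over combinatorial objects and match them through a bijection plus a sign-reversing involution. Expanding the determinant by the Leibniz formula gives
\[
\det M=\sum_{\pi\colon\Actors\to\Roles}\sgn(\pi)\prod_{I}M_{I,\pi(I)},
\]
where $\sgn(\pi)$ is the sign of the permutation of $[n]$ induced by the $\min$-identification. Each entry $M_{I,f}$ is a sum of path weights $W(x_I\to y_f)$ multiplied by a coefficient in $\{0,\pm1\}$. So $\det M$ is a signed sum over \emph{castings}: pairs consisting of a bijection $\pi$ and a family of $n$ paths, the path of actor $I$ running from $x_I$ to $y_{\pi(I)}$. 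The coefficient is
\[
\sgn(\pi)\prod_{I\ilo\pi(I)\in\Ghosts}\bigl(-[\varepsilon_{\pi(I)}=-1]\bigr)\prod_{I\igo\pi(I)\in\Ghosts}[\varepsilon_{\pi(I)}=+1].
\]
In particular a casting survives only if, for every ghost $g=\pi(I)$, the actor $I$ lies on the side of $g$ selected by $\varepsilon_g$: to the left if $\varepsilon_g=-1$, to the right if $\varepsilon_g=+1$.

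The first main step is an \emph{attribution map} from performances to castings. Take a performance with genealogy forest $\mathcal T$ and ghost paths $\Gamma_g$. Walk each heir's tree from the leaves upward. At each merger $c_g$ two incoming particles meet, one on each side of junction $g$. The one on the side opposite to $\varepsilon_g$ continues along the heir's path. The other is rerouted along $\Gamma_g$: the right-hand particle if $\varepsilon_g=+1$ (the ghost ends left), the left-hand one if $\varepsilon_g=-1$. This decomposes the performance's edges into $n$ paths, one per actor, without changing the total weight. We then check, via \Cref{prop:consecutivity} and induction on merges, two facts. First, every ghost column receives an actor on its allowed side, so the Iverson brackets equal $1$. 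Second, the resulting sign is $+1$. The factors $-1$ from ghosts with $\varepsilon_g=-1$ fed from the left should cancel exactly the sign of the cyclic shift that the rerouting induces in the $\min$-labels; the $3$-cycle of \Cref{fig:boundary} is the model case.

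Next we build the inverse, a \emph{rehearsal} algorithm. It scans a casting's paths in the fixed linear extension $\le$ and merges adjacent active particles at their first meeting. We show it reconstructs a performance exactly when the casting is \emph{good}: every first meeting of adjacent active particles happens at the prescribed junction and in the prescribed order, and is resolved according to $\varepsilon$. This makes attribution a weight-preserving bijection from performances onto good castings.

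For the remaining \emph{bad} castings we define a sign-reversing involution. Find the first vertex, in the order $\le$, at which the rehearsal fails, that is, the first ``wrong'' crossing between two active paths. Swap the tails of those two paths after that vertex. This preserves weight and composes $\pi$ with a transposition, so it flips $\sgn(\pi)$.

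The main obstacle is showing that this swap is a genuine involution that keeps the Iverson coefficients intact. The swap might move a ghost column to an actor on the forbidden side, which would make the partner term zero instead of a negated one. It might also change which crossing counts as first. To handle this we would use (P1) to guarantee that such crossings exist, and (P2) to show that the first failure involves two paths that are adjacent in the current active labeling. We then argue that the swap exchanges two actors on the same side of every ghost junction whose column they feed, so the side conditions and the history before the swap vertex are unchanged. Once bad castings cancel in pairs, $\det M$ equals the weight of the good castings, which equals $Z$ by the bijection.
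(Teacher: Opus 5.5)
Your architecture matches the paper's: Leibniz expansion restricted to $\varepsilon$-candidate castings, attribution by the far-side principle, rehearsal, and a tail swap at the first failed crossing.

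\textbf{The gap.} It is hidden in ``we show it reconstructs a performance exactly when the casting is good'' and ``use (P1) to guarantee that such crossings exist.'' Rehearsal has a third conceivable outcome that you must exclude. Every meeting of two active representatives is valid, yet the run ends with some prescribed ghost $g$ never fired, its performer $\pi^{-1}(g)$ still an active survivor. Such a casting yields no performance with final state $\FinalState$. It also has no wrong crossing, so your involution is undefined on it and it would stay unpaired in the signed sum.

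(P1) alone does not tell you which pair must cross. What it gives is that at such a termination the survivors' paths are pairwise vertex-disjoint, so their endpoints are strictly ordered like their labels. Extracting a contradiction from this is a genuine combinatorial step: the paper's assignment rigidity lemma (\Cref{lem:assignment-rigidity}, applied in \Cref{cor:rehearsal-reaches-aspiration}). One way to do it:
\begin{itemize}
\item Let $g$ be the minimal unfired ghost and $\heir(g)=[a,b)$. Every junction in $(a,g)$ has fired, so $[a,g)$ is an active interval, with survivor $c$.
\item An unfired $g'$ with $\varepsilon_{g'}=+1$ has performer label $\geq g'$ and endpoint before its heir's. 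One with $\varepsilon_{g'}=-1$ has performer label $<g'$ and endpoint after its heir's.
\item Together with the heir order, this forces the survivors left of $a$ to perform the heirs lying left of $a$, and forces $c$ to perform $\heir(g)$.
\item The role $g$ then fits neither before nor after $\heir(g)$ in the endpoint order.
\end{itemize}

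\textbf{Two further steps are asserted rather than proved}, though both are repairable.

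First, the sign of an attributed casting (``should cancel''). In $\min$-labels, merging $[a,g)$ with $[g,c)$ leaves the induced permutation unchanged when $\varepsilon_g=+1$. When $\varepsilon_g=-1$ it composes the permutation with the transposition $(a\;g)$. Hence $\sgn\pi=\prod_g\varepsilon_g$ and the coefficient is $+1$. The paper instead tracks the parity of inversions for the tournament $\iloeps$ (\Cref{lem:coalescence-inversion-parity}).

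Second, the definition of ``good.'' It must be the purely local test that one of the two meeting representatives $I,J$ is $\pi^{-1}(g)$, where $g$ is the junction between their active intervals. The final state prescribes no order of coalescences, so ``in the prescribed order'' has no content. Your same-side claim needs exactly this test:
\begin{itemize}
\item It holds for every ghost $h\neq g$ outside the two merging intervals, by position.
\item It holds for every ghost $h$ inside them, because $h$ has already fired, so its performer was consumed there and is neither $I$ nor $J$.
\item It fails precisely at $g$, where $I$ and $J$ lie on opposite sides (\Cref{lem:swap-criterion}).
\end{itemize}
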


\noindent
This generalizes \Cref{thm:intro-coalescence}.

\begin{remark}[Edge cases of the boundary conditions]
\label{rem:formula-edge-cases}
Three boundary configurations refine the reading of
\Cref{thm:coalescence}. \emph{Weakly increasing heir positions:} the
ordering assumption on heir positions (\Cref{sec:setup-final}) is
weak, so the theorem covers two heirs sharing a final position. Such
heirs would coalesce immediately, so the configuration does not
arise; on the matrix side their columns coincide, so both sides of
\eqref{eq:coalescence-formula} vanish. \emph{A ghost coinciding with
its heir:} \Cref{def:ghost-sign} leaves $\varepsilon_g$ free when
$y_g = y_{\heir(g)}$, and $\det M$ does not depend on the choice:
passing from $\varepsilon_g = -1$ to $\varepsilon_g = +1$ amounts to
adding the heir column to the ghost column, a column operation.
\emph{Coinciding initial positions} $x_i = x_{i+1}$: particles
starting together coalesce immediately, so any realizable pattern
has a ghost at the junction between $i$ and~$i{+}1$. Unlike in the
non-colliding case, $\det M$ then need not vanish: the junction
places the two particles on opposite sides, so the two rows agree
on every heir column but differ on that ghost column, whose sign
selects a different entry in each row; instantaneous coalescence is
a legitimate pattern of generally non-zero weight. (For a pattern
without that ghost the rows coincide and both sides vanish.)
\end{remark}

\subsection{Original proof of the non-colliding case}
\label{sec:classic-proof}

The proof which we provide for \cref{thm:coalescence} is
adapted from the original papers
\cite{KM1959,Lindstrom1973,GV1985,Stembridge1990}
which concerned the special case of calculating the
probability (respectively, weight) of $n$ particles
(respectively, paths) reaching specified final positions in
a collision-free way. We shall recall the original proof in
a compressed form.

The key idea was to consider the Leibniz expansion of the
determinant $\det M$ as a sum over bijections $\pi$ between
the initial and the final positions. Any
contributing product of matrix $M$ entries (each counting
weights of single-particle trajectories) can be interpreted
as a sum over \emph{tuples} of trajectories
$\paths=(P_1,\dots,P_n)$ each connecting specific
initial and (permuted by $\pi$) final positions. In other
words, the determinant $\det M$ is expressed as a signed sum
over pairs $(\pi, \paths)$. Since such pairs play a
prominent role in this paper, we call them
\emph{castings} (\Cref{def:casting}).
 
The goal of the classic proof is to evaluate this signed sum by
cancellation. The planarity assumption guarantees that a trajectory
tuple $\paths$ without crossings can occur only for the identity
permutation $\pi=\id$, which the determinant expansion counts with
the positive sign $\sgn \pi=+1$.

The crossing tuples are removed by a \emph{sign-reversing involution}
on the set of castings: it pairs each crossing casting
$(\pi,\paths)$ with another crossing casting
$(\pi',\paths')$ of equal weight $w(\paths)=w(\paths')$
but opposite sign $\sgn \pi=-\sgn \pi'$, so that the two
contributions cancel. After all crossing castings cancel in such
pairs, only the non-crossing tuples survive, each carrying $\pi=\id$
and positive sign. The determinant $\det M$ therefore equals the
total weight of non-crossing trajectory tuples, which is the desired
count.

\subsection{What goes wrong for coalescing particles}
\label{sec:proof-overview}

The overall strategy survives: as in \cref{sec:classic-proof}, we
express the determinant $\det M$ as a signed sum over castings and
construct a sign-reversing involution which matches \emph{failed}
castings in pairs, so that their contributions cancel. Two features
of the coalescing setting, however, force adaptations---one minor,
one essential.

\subsubsection{A minor difference: $\varepsilon$-candidate castings}
The first difference concerns which bijections $\pi$ contribute.
In the classical setting the matrix has no forced zeros, so every
permutation $\pi$ can appear in the Leibniz expansion. Our matrix, by
contrast, has \emph{structural} zeros: each ghost column vanishes
except on the side selected by its sign, so a bijection contributes
only when it assigns every ghost an actor on the correct side. We call a
casting whose bijection meets this condition an \emph{$\varepsilon$-candidate}
casting (\Cref{def:candidate}), and restrict attention to these; see
\Cref{sec:proof-candidates}.

\subsubsection{The essential difference: performances are not castings}

The two sides of \eqref{eq:coalescence-formula} count different
kinds of objects. The left-hand side counts \emph{performances}. A
performance is \textbf{role-based}: at each binary coalescence two
particles arrive, meet, and disappear, replaced by an heir and
a ghost that emerge from the junction. Nothing threads what enters the
event to what leaves it.
This description says nothing about the identities behind the heir
and the ghost---only that the coalescence produced one of each. The
discontinuity is reflected in \Cref{fig:intro-21}: the line styles
entering the coalescence differ from those leaving it, emphasizing
that identities do not persist across the event. In theatrical
terms, the script fixes where each character stands in the final
scene, but not which actor plays which character.

\begin{figure}[t]
\centering

\pgfmathsetmacro{\arcrad}{0.35}

\begin{tikzpicture}[scale=0.55]
  \begin{scope}
    \clip (-0.2,-0.2) rectangle (11.2,6.2);

    \foreach \x in {-1,...,12} {
      \foreach \t in {-1,...,7} {
        \pgfmathparse{mod(\x+\t,2)==0 ? 1 : 0}
        \ifnum\pgfmathresult>0
          \fill[gray!30] (\x,\t) circle (1.5pt);
        \fi
      }
    }

    \foreach \x in {-2,0,2,4,6,8,10,12} {
      \foreach \t in {-2,0,2,4,6,8} {
        \draw[gray!30, thin] (\x,\t) -- (\x-1,\t+1);
        \draw[gray!30, thin] (\x,\t) -- (\x+1,\t+1);
      }
    }
    \foreach \x in {-1,1,3,5,7,9,11} {
      \foreach \t in {-1,1,3,5,7} {
        \draw[gray!30, thin] (\x,\t) -- (\x-1,\t+1);
        \draw[gray!30, thin] (\x,\t) -- (\x+1,\t+1);
      }
    }
  \end{scope}

  \draw[->, thick] (-0.3,0) -- (11,0) node[right] {$x$};
  \draw[->, thick] (0,-0.3) -- (0,6.7) node[above] {$t$};

  \draw (-0.1, 6) -- (0.1, 6);
  \node[left] at (-0.15,6) {\small $T$};

  \coordinate (collision) at (4,2);
  \coordinate (beforeArc) at (5,1);   %
  \coordinate (afterArc) at (3,3);    %

  \pgfmathanglebetweenpoints{\pgfpointanchor{collision}{center}}{\pgfpointanchor{beforeArc}{center}}
  \edef\angBin{\pgfmathresult}
  \pgfmathanglebetweenpoints{\pgfpointanchor{collision}{center}}{\pgfpointanchor{afterArc}{center}}
  \edef\angBout{\pgfmathresult}
  \pgfmathsetmacro{\arcEndB}{\angBout < \angBin ? \angBout + 360 : \angBout}

  \draw[particleC] (6,0) -- (7,1) -- (6,2) -- (5,3) -- (6,4) -- (7,5) -- (6,6);

  \draw[particleA] (2,0) -- (3,1) -- (collision);
  \draw[particleAg] (collision) -- (5,3);
  \draw[particleAg, transform canvas={shift={(0.1,-0.1)}}]
    (5,3) -- (6,4) -- (7,5);
  \draw[particleAg] (7,5) -- (8,6);

  \draw[colB, double, double distance=2pt, line width=0.6pt]
    (4,0) -- (beforeArc)
    -- ([shift={(\angBin:\arcrad)}]collision)
    arc[start angle=\angBin, end angle=\arcEndB, radius=\arcrad]
    -- (afterArc) -- (2,4) -- (3,5) -- (4,6);

  \node[circle, fill=white, inner sep=0pt, minimum size=5pt,
        line width=0.8pt, draw=black] at (collision) {};
  \node[left=0.2] at (collision) {\small $c$};

  \fill[colA] (2,0) circle (4pt);
  \fill[colB] (4,0) circle (4pt);
  \fill[colC] (6,0) circle (4pt);

  \node[below] at (2,-0.15) {\small $x_1$};
  \node[below] at (4,-0.15) {\small $x_2$};
  \node[below] at (6,-0.15) {\small $x_3$};

  \fill[colB] (4,6) circle (4pt);
  \fill[colC] (6,6) circle (4pt);
  \draw[colA, fill=white, line width=1.5pt] (8,6) circle (4pt);

  \node[above] at (4,6.15) {\small $y_H$};
  \node[above] at (6,6.15) {\small $y_{H'}$};
  \node[above] at (8,6.15) {\small $y_g$};

  \begin{scope}[shift={(11,1)}]
    \draw[particleA] (0,4.6) -- (0.7,4.6);
    \draw[particleAg] (0,4.2) -- (0.7,4.2);
    \node[right, align=left, inner sep=1pt] at (0.8,4.4)
      {\small particle $1$\\[-3pt]\small $\to$ ghost at $y_g$};
    \draw[particleB] (0,2.8) -- (0.7,2.8);
    \node[right, align=left, inner sep=1pt] at (0.8,2.8)
      {\small particle $2$\\[-3pt]\small $\to$ heir $1$ at $y_H$};
    \draw[particleC] (0,1.2) -- (0.7,1.2);
    \node[right, align=left, inner sep=1pt] at (0.8,1.2)
      {\small particle $3$\\[-3pt]\small $\to$ heir $2$ at $y_{H'}$};
  \end{scope}

\end{tikzpicture}

\caption{\textbf{Successful casting: actor-based view of a performance.}
This is a casting---a bijection $\pi$ from actors to roles together with the
paths realizing it---but it also displays a \emph{performance}: the same
coalescence as \Cref{fig:intro-21}, now with each actor's identity tracked
through the crossing. Here $\pi$ sends particle~$1$ (solid) to the ghost
at~$y_g$, particle~$2$ (double) to the heir at~$y_H$, and particle~$3$ (zigzag)
to the heir at~$y_{H'}$; the line styles persist past~$c$, revealing who went
where. The casting is \emph{successful}: the crossing at~$c$ is a valid
coalescence---one of the two meeting paths is destined for the ghost, so the
crossing is not spurious---and rehearsal reconstructs exactly the performance
of \Cref{fig:intro-21}.}
\label{fig:actor-bijection}
\end{figure}

\begin{figure}[t]
\begin{tikzpicture}[scale=0.55]
  \begin{scope}
    \clip (-0.2,-0.2) rectangle (11.2,6.2);

    \foreach \x in {-1,...,12} {
      \foreach \t in {-1,...,7} {
        \pgfmathparse{mod(\x+\t,2)==0 ? 1 : 0}
        \ifnum\pgfmathresult>0
          \fill[gray!30] (\x,\t) circle (1.5pt);
        \fi
      }
    }

    \foreach \x in {-2,0,2,4,6,8,10,12} {
      \foreach \t in {-2,0,2,4,6,8} {
        \draw[gray!30, thin] (\x,\t) -- (\x-1,\t+1);
        \draw[gray!30, thin] (\x,\t) -- (\x+1,\t+1);
      }
    }
    \foreach \x in {-1,1,3,5,7,9,11} {
      \foreach \t in {-1,1,3,5,7} {
        \draw[gray!30, thin] (\x,\t) -- (\x-1,\t+1);
        \draw[gray!30, thin] (\x,\t) -- (\x+1,\t+1);
      }
    }
  \end{scope}

  \draw[->, thick] (-0.3,0) -- (11,0) node[right] {$x$};
  \draw[->, thick] (0,-0.3) -- (0,6.7) node[above] {$t$};

  \draw (-0.1, 6) -- (0.1, 6);
  \node[left] at (-0.15,6) {\small $T$};

  \coordinate (spurious) at (4,2);

  \draw[particleC] (6,0) -- (7,1) -- (6,2) -- (5,3);
  \draw[particleC, transform canvas={shift={(0.1,-0.1)}}]
    (5,3) -- (6,4) -- (7,5);
  \draw[particleC] (7,5) -- (8,6);

  \draw[particleA] (2,0) -- (3,1) -- (spurious) -- (3,3) -- (2,4) -- (3,5) -- (4,6);

  \draw[particleB] (4,0) -- (5,1) -- (spurious) -- (5,3) -- (6,4) -- (7,5) -- (6,6);

  \draw[red, line width=2pt] (spurious) circle (0.4);
  \node[font=\large\bfseries, red, right=0.2] at (spurious) {!};
  \node[left=0.3] at (spurious) {\small $c$};

  \fill[colA] (2,0) circle (4pt);
  \fill[colB] (4,0) circle (4pt);
  \fill[colC] (6,0) circle (4pt);

  \node[below] at (2,-0.15) {\small $x_1$};
  \node[below] at (4,-0.15) {\small $x_2$};
  \node[below] at (6,-0.15) {\small $x_3$};

  \fill[colA] (4,6) circle (4pt);
  \fill[colB] (6,6) circle (4pt);
  \draw[colC, fill=white, line width=1.5pt] (8,6) circle (4pt);

  \node[above] at (4,6.15) {\small $y_H$};
  \node[above] at (6,6.15) {\small $y_{H'}$};
  \node[above] at (8,6.15) {\small $y_g$};

  \begin{scope}[shift={(11,0.5)}]
    \draw[particleA] (0,5.0) -- (0.7,5.0);
    \node[right, align=left, inner sep=1pt] at (0.8,5.0)
      {\small particle $1$\\[-3pt]\small $\to$ heir $1$ at $y_H$};
    \draw[particleB] (0,3.4) -- (0.7,3.4);
    \node[right, align=left, inner sep=1pt] at (0.8,3.4)
      {\small particle $2$\\[-3pt]\small $\to$ heir $2$ at $y_{H'}$};
    \draw[particleC] (0,1.8) -- (0.7,1.8);
    \node[right, align=left, inner sep=1pt] at (0.8,1.8)
      {\small particle $3$\\[-3pt]\small $\to$ ghost at $y_g$};
  \end{scope}

\end{tikzpicture}

\caption{\textbf{Failed casting: a spurious crossing.}
The same endpoints as \Cref{fig:actor-bijection}, realized by different paths
and a different bijection: here $\pi$ sends particle~$1$ to the heir at~$y_H$,
particle~$2$ to the heir at~$y_{H'}$, and particle~$3$ to the ghost at~$y_g$.
Scanning crossings in chronological order, rehearsal reaches~$c$, where the
paths of particles~$1$ and~$2$ meet; but neither is destined for the ghost
role---both end at heir positions---so the crossing cannot be read as a
coalescence. It is \emph{spurious}. Rehearsal therefore terminates and
produces no performance: its only output in this failed case is the spurious
crossing~$c$ together with the two actors that triggered it, particles~$1$
and~$2$. Segment swap takes exactly this data, exchanging the two actors' path
suffixes at~$c$ to pair the failed casting with a sign-opposite partner
(\Cref{sec:proof-swap}).}
\label{fig:failed-casting}

\end{figure}

The right-hand side, $\det M$, counts a different kind of object.
As in the classical argument (\cref{sec:classic-proof}), its Leibniz
expansion is a signed sum over castings $(\pi,\paths)$; 
but a casting is \textbf{actor-based}, tracking
each particle's complete trajectory, so in \Cref{fig:actor-bijection}
the line styles persist through the coalescence, recording who went
where.

The proof bridges the two descriptions. It establishes a
weight-preserving bijection between performances and the
\emph{successful} castings---those that reproduce a valid
coalescence pattern (\Cref{def:successful-casting})---while the
remaining \emph{failed} castings cancel in signed pairs
(\Cref{fig:failed-casting}). The remainder of this overview
describes the machinery that realizes this bijection and
cancellation.

\subsection{Adapting the proof}
\label{sec:formula-adapting}

Two maps bridge the two descriptions. \emph{Attribution}
(\Cref{sec:proof-attribution}) turns a performance into a casting by
tracking which actor ends where; it never fails, is injective, and its
output is always an $\varepsilon$-candidate casting
(\Cref{prop:attribution-candidate}). \emph{Rehearsal}
(\Cref{sec:rehearsal}) tests the converse: scanning the crossings of
an $\varepsilon$-candidate casting in chronological order, it either
processes every crossing as a coalescence and reproduces a
performance---the casting is \emph{successful}---or halts at a
\emph{spurious} crossing and reports its \emph{failure pair}---the
casting is \emph{failed}. The failed castings cancel in signed pairs:
the \emph{segment swap} (\Cref{sec:proof-swap}), applied at the
failure pair, yields the involution~$\iota$
(\Cref{def:global-involution}) whose fixed points are exactly the
successful castings (\Cref{thm:involution}), and these are in
weight-preserving bijection with performances
(\Cref{prop:performance-casting-bijection}).
\Cref{sec:proof-setup,sec:proof-attribution,sec:rehearsal,%
sec:proof-bijection,sec:involution,sec:proof-main} develop these
components in turn.

\section{Expanding the determinant}
\label{sec:proof-setup}
\label{sec:proof-castings}

Throughout the proof, we fix a final state~$\FinalState$
(\Cref{sec:setup-final}): the ghost set~$\Ghosts$, the heir
set~$\Heirs$, and final positions~$y_f$ for each $f \in \Roles$.
The total weight $Z = Z_{\FinalState}$ counts
\emph{performances} for~$\FinalState$. From
\Cref{sec:proof-attribution} onward we also assume, as in
\Cref{thm:coalescence}, that the pair $(\mathcal{X}, \mathcal{Y})$
is planar (\Cref{def:planar}); the present section is
planarity-free.

\Cref{sec:formula} named the two sides of \eqref{eq:coalescence-formula} as
performances and castings; this section makes the casting side precise
and equips each casting with the right sign. The Leibniz formula offers
the usual permutation sign $\sgn\pi$, but that sign is bound to the
arbitrary order we placed on the columns; we replace it by the
\emph{ghost-adjusted sign} $\gasgnof{\pi}$, read off the label order and the
ghost signs alone---the intrinsic data of the final state, blind to any
left--right convention. The two constructions combine in one clean
identity, the \emph{restricted Leibniz expansion}
(\Cref{prop:restricted-leibniz}): $\det M$ is the $\gasgn$-signed sum
over $\varepsilon$-candidate castings.

\subsection{The Leibniz expansion}

The Leibniz formula expands the determinant as a sum over bijections
$\pi\colon \Actors \to \Roles$ from actors to roles:
\[
\det M = \sum_{\pi} \sgn \pi \prod_{I \in \Actors} M_{I, \pi(I)}.
\]
Each matrix entry $M_{I,f}$ is either a weight
$W(x_I \to y_f)$ (for heir columns) or
$\pm [\varepsilon_g = \pm 1] \cdot W(x_I \to y_g)$ (for ghost
columns). Expanding:
\[
\det M = \sum_{\pi} \sgn \pi \sum_{\paths} (\text{sign factors})
\cdot w(\paths),
\]
where $\paths = \{P_I\}_{I \in \Actors}$ is a family of paths with
$P_I$ running from $x_I$ to $y_{\pi(I)}$. 

\fnote{%
  prose={A casting: a bijection $\pi$ together with a path family.},
  lean={Coalescence.Casting},
  py={combinatorics.casting.Casting}}%
\begin{definition}[Casting]\label{def:casting}
A \emph{casting} $(\pi, \paths)$ consists of:
\begin{itemize}
\item A bijection $\pi\colon \Actors \to \Roles$ from actors
  (initial particles) to roles (final entities);
\item A path family $\paths = \{P_I\}_{I \in \Actors}$ where $P_I$
  goes from $x_I$ to $y_{\pi(I)}$.
\end{itemize}
The \emph{weight} is the product of the path weights,
\[w(\paths) = \prod_I w(P_I).\]
The sign a casting carries is the ghost-adjusted sign defined below
(\Cref{eq:ghost-adjusted-sign}), not the bare permutation sign.
\end{definition}

Crucially, the path-family layer is \emph{pure geometry}: the
paths are non-interacting. They may cross freely---a crossing
is simply a point where two paths share a vertex, with no
physical consequence. There is no coalescence physics yet: the
determinant gives us $n$ non-interacting walkers, not an
interacting particle system.

\subsection{$\varepsilon$-candidate bijections}
\label{sec:proof-candidates}

Each ghost column carries forced zeros, and these decide which
bijections contribute. A ghost entry $M_{I,g}$ is non-zero only when
two conditions align. The first is \emph{geometric}: through the
bracket $[\varepsilon_g = \pm 1]$, the ghost sign selects one side of
the junction~$g$ and zeros every entry on the other side. The second
is \emph{combinatorial}: which side a given row lies on is fixed by
the label relation between the row's interval~$I$ and the
junction~$g$. A term in the Leibniz expansion therefore survives only
if, for every ghost, the \emph{performer} $\pi^{-1}(g)$---the actor
that $\pi$ casts in the role~$g$---lands on the side left non-zero by
the sign. We record this as a condition on~$\pi$.

\fnote{%
  prose={$\varepsilon$-candidate bijections: every performer lands on the
    sign-selected side of its junction. Lean:
    \texttt{Coalescence.PlainAssignment.Is}$\varepsilon$\texttt{Candidate}.},
  py={valuations.predicates}}%
\begin{definition}[$\varepsilon$-candidate bijection]\label{def:candidate}
A bijection $\pi\colon \Actors \to \Roles$ is an \emph{$\varepsilon$-candidate} if,
for every ghost $g \in \Ghosts$, the performer $\pi^{-1}(g)$ falls on
the side of the junction dictated by the ghost sign:
\[
g \ilo \pi^{-1}(g) \ \text{ if } \varepsilon_g = +1,
\qquad
\pi^{-1}(g) \ilo g \ \text{ if } \varepsilon_g = -1
\]
(equivalently, $g \ilo \pi^{-1}(g) \iff \varepsilon_g = +1$). We write
$\Candidates$ for the set of $\varepsilon$-candidate bijections.
\end{definition}

The ghost sign $\varepsilon_g$ compares the ghost's final
\emph{spatial position} to its heir's; $\varepsilon$-candidacy is the matching
condition on the performer's interval \emph{label} relative to the
junction.

The ghost-column brackets select exactly the $\varepsilon$-candidate bijections:
each ghost column vanishes off its sign-selected side, so only the
terms with $\pi \in \Candidates$ survive.

\subsection{The ghost-adjusted sign}

\subsubsection{Factoring out the ghost signs}

The $\varepsilon$-candidacy condition depends only on $\pi$, not on the paths
$\paths$, so the determinant expansion splits into a sign and a weight.
Fix an $\varepsilon$-candidate $\pi$. In each heir column the selected entry is
$W(x_I \to y_H)$; in each ghost column $\varepsilon$-candidacy places the performer
on the side left non-zero by the bracket $[\varepsilon_g = \pm 1]$,
which contributes the factor $\varepsilon_g$, so the selected entry is
$\varepsilon_g\, W(x_I \to y_g)$. Hence
\[
\prod_{I \in \Actors} M_{I, \pi(I)}
= \Bigl(\prod_{g \in \Ghosts} \varepsilon_g\Bigr)
  \prod_{I \in \Actors} W(x_I \to y_{\pi(I)}),
\]
and the Leibniz expansion gathers each casting's permutation sign and
these ghost signs into a single sign---the one the determinant attaches
to the casting. We define that sign without reference to any order on
the columns.

\subsubsection{The tournament $\iloeps$}

On the final entities, the label order $\ilo$ is only partial: by
\Cref{sec:setup-coalescence} its only incomparable pairs are an heir $H$
and a ghost $g$ lying in its interior (so $H = \heir(g)$). We extend it to a \emph{total}
relation $\iloeps$ by deciding exactly those
incomparable pairs from the side on which the ghost comes to rest
relative to its heir,
\[
g \iloeps H \ \text{ if } \varepsilon_g = -1,
\qquad \qquad
H \iloeps g \ \text{ if } \varepsilon_g = +1,
\]
while $f \iloeps f'$ agrees with $f \ilo f'$ on every comparable pair.
This placement runs against the spatial picture: a ghost coming to rest
to the \emph{left} of its heir ($\varepsilon_g = +1$) is ordered
\emph{after} it in $\iloeps$, while a ghost resting to the \emph{right}
($\varepsilon_g = -1$) is ordered \emph{before} it---each ghost sits, in
$\iloeps$, on the side of its heir \emph{opposite} to where it comes to
rest spatially.
The ghost sign $\varepsilon_g$ enters only as a name for ``which
side''; which side one calls positive will not matter. Every pair of
roles is now comparable, so $\iloeps$ is total; it need not, however, be
transitive---$\iloeps$ is a \emph{tournament}, not a linear order
(\Cref{rmk:tournament}).

\subsubsection{The sign as an inversion count}

Counting inversions needs only that every pair is comparable, not
transitivity. For a bijection $\pi$, an \emph{inversion} is a pair of
actors $I \ilo I'$ whose assigned roles $\iloeps$ reverses, that is, with
$\pi(I') \iloeps \pi(I)$; write $N(\pi)$ for their number,
\[
N(\pi) = \#\bigl\{\, I \ilo I' : \pi(I') \iloeps \pi(I) \,\bigr\}.
\]
Most are ordinary $\ilo$-inversions between comparable roles. The rest
come at most one per ghost: $g$ contributes an inversion exactly when its
performer and resting place fall on the same side---both left, or both
right---of its heir's performer and of the heir. One may read such an
inversion as a placement gone wrong; the inversion-free placement is
the \emph{far-side principle} that attribution will follow
(\Cref{sec:proof-attribution}).

The \emph{ghost-adjusted sign} of $\pi$ is then
\fnote{%
  prose={The ghost-adjusted sign $\gasgn$, defined as a tournament
    inversion count.},
  lean={Coalescence.PlainAssignment.sgnEps}}%
\begin{equation}\label{eq:ghost-adjusted-sign}
\gasgnof{\pi} \;:=\; (-1)^{N(\pi)}.
\end{equation}

\ifextendedversion
\begin{example}[The running example]\label{ex:gasgn-running}
Return to \Cref{ex:final-state} (\Cref{fig:genealogy,fig:boundary}): the
pattern $3{+}1$, with heirs $H = [1,4)$ and $H' = [4,5)$ and ghosts $2$
and~$3$, both interior to $H$. In \Cref{fig:genealogy} each ghost comes to
rest to the \emph{right} of its heir, so $\varepsilon_2 = \varepsilon_3 = -1$;
the tournament then places both ghosts before $H$, and $\iloeps$ is the linear
order
\[
2 \iloeps 3 \iloeps H \iloeps H'.
\]
For the bijection
$\pi\colon I_1 \mapsto H,\ I_2 \mapsto 3,\ I_3 \mapsto H',\ I_4 \mapsto 2$
of \Cref{fig:boundary}, the actor pairs $I \ilo I'$ with
$\pi(I') \iloeps \pi(I)$ are
\[
(I_1, I_2), \quad (I_1, I_4), \quad (I_2, I_4), \quad (I_3, I_4),
\]
four in all, so $N(\pi) = 4$ and $\gasgnof{\pi} = (-1)^{4} = +1$. This count
reads off the order $\iloeps$ alone---no $\min$, no permutation sign.
\Cref{lem:gasgn-eval} below gives the $\min$-based cross-check
$\gasgnof{\pi} = \sgn \pi \prod_g \varepsilon_g$: here $\pi$ is the $3$-cycle
$(2\,3\,4)$, of sign $+1$, and $\varepsilon_2 \varepsilon_3 = +1$, in
agreement.
\end{example}
\fi

\fnote{%
  prose={Evaluation $\gasgnof{\pi}=\sgn\pi\prod_g\varepsilon_g$; Python
    brute-forces it over every bijection.},
  lean={Coalescence.PlainAssignment.sgnEps_eq_sgn_mul_eps},
  py={tests/test_sgn_eps.py}}%
\begin{lemma}[Evaluating the ghost-adjusted sign]\label{lem:gasgn-eval}
For every bijection $\pi\colon \Actors \to \Roles$,
\[
\gasgnof{\pi} = \sgn \pi \prod_{g \in \Ghosts} \varepsilon_g.
\]
\end{lemma}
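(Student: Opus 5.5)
We compare the tournament inversion count $N(\pi)$ with the ordinary inversion count of the permutation $\sigma = \min\circ\pi$ of $[n]$, since $\sgn\pi = (-1)^{\mathrm{inv}(\sigma)}$ by the $\min$-identification of \Cref{sec:setup-coalescence}. Actors are totally ordered by $\ilo$, which matches the order of $\min$. The roles carry two total relations: the $\min$-order, a linear order extending $\ilo$ in which each heir precedes its ghosts, and the tournament $\iloeps$. So $\mathrm{inv}(\sigma)$ counts the pairs $I\ilo I'$ with $\pi(I')$ before $\pi(I)$ in $\min$-order, and $N(\pi)$ counts the pairs whose roles are reversed by $\iloeps$.

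First observe that the two relations agree on every $\ilo$-comparable pair of roles. Indeed, $\iloeps$ agrees with $\ilo$ there by definition, and the $\min$-order extends $\ilo$. The only incomparable pairs are the pairs $\{H,g\}$ with $H=\heir(g)$. On such a pair, the $\min$-order puts $H$ before $g$. The tournament $\iloeps$ agrees with this when $\varepsilon_g=+1$ and reverses it when $\varepsilon_g=-1$.

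Next, fix a ghost $g$ with $\varepsilon_g=-1$, and let $I=\pi^{-1}(H)$ and $J=\pi^{-1}(g)$ be the two actors mapped onto the pair $\{H,g\}$. This unordered pair of actors is an inversion for exactly one of the two relations. If $I\ilo J$, it is an inversion for $\iloeps$ but not for the $\min$-order; if $J\ilo I$, the reverse holds. Every other actor pair, and every pair with $\varepsilon_g=+1$, is an inversion for both relations or for neither. Since distinct ghosts give distinct role pairs, and hence disjoint actor pairs, we get
\[
N(\pi) \equiv \mathrm{inv}(\sigma) + \#\{g\in\Ghosts : \varepsilon_g=-1\} \pmod 2 .
\]
Exponentiating gives $\gasgnof{\pi} = \sgn\pi \cdot (-1)^{\#\{g:\varepsilon_g=-1\}} = \sgn\pi\prod_g\varepsilon_g$.

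The main care point is the bookkeeping in the second step: the comparison must be made pair by pair on actors, not on roles. This is legitimate because $\pi$ is a bijection, so the unordered actor pairs correspond bijectively to unordered role pairs, and each actor pair is counted once according to which relation reverses it. Transitivity of $\iloeps$ is never used, consistent with \Cref{rmk:tournament}.
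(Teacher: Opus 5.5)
Your proof is correct and follows the paper's argument: you compare the tournament inversion count $N(\pi)$ with the $\min$-order inversion count, note that the two agree on every $\ilo$-comparable role pair, and observe that each ghost with $\varepsilon_g=-1$ changes the inversion status of exactly one actor pair, which gives $N(\pi)\equiv N_{\min}(\pi)+\#\{g:\varepsilon_g=-1\} \pmod 2$. One small wording fix: the actor pairs attached to different ghosts of the same heir are distinct but not disjoint, since they share the actor $\pi^{-1}(\heir(g))$, and distinctness is all your count needs.
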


\begin{proof}
Let $N_{\min}(\pi) = \#\{I \ilo I' : \min \pi(I') < \min \pi(I)\}$ count
the inversions of $\pi$ in the $\min$-order, so that
$\sgn \pi = (-1)^{N_{\min}(\pi)}$. The $\min$-order is a linear
extension of $\ilo$, so $\iloeps$ and the $\min$-order agree on every
comparable pair; they differ only on the incomparable pairs, and there
only for a ghost $g$ with $\varepsilon_g = -1$, where $g \iloeps \heir(g)$
while $g > \min \heir(g)$ places $g$ after its heir. Each such ghost
flips exactly one inversion, so
$N(\pi) \equiv N_{\min}(\pi) + \#\{g : \varepsilon_g = -1\} \pmod 2$.
Hence $\gasgnof{\pi} = (-1)^{N(\pi)} = (-1)^{N_{\min}(\pi)}
\prod_{g} \varepsilon_g = \sgn \pi \prod_{g} \varepsilon_g$.
\end{proof}

\subsubsection{The restricted Leibniz expansion}

\fnote{%
  prose={The restricted Leibniz expansion $\det M$ over
    $\varepsilon$-candidate castings; the algebraic core of layer~2.},
  lean={Coalescence.MainTheorem.main_theorem_layer2, Coalescence.MainTheorem.M_V}}%
\begin{proposition}[Restricted Leibniz expansion]\label{prop:restricted-leibniz}
For fixed initial and final state, the determinant is a signed sum over
$\varepsilon$-candidate castings, with the ghost-column signs absorbed into the
ghost-adjusted sign:
\begin{equation}\label{eq:restricted-leibniz}
\det M
= \sum_{\pi \in \Candidates} \gasgnof{\pi}
  \prod_{I \in \Actors} W(x_I \to y_{\pi(I)})
= \sum_{\substack{(\pi, \paths) \\ \pi \in \Candidates}}
  \gasgnof{\pi} \cdot w(\paths).
\end{equation}
\end{proposition}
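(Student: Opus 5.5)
The plan is a direct computation from the Leibniz formula, in three steps. First, I expand
\[
\det M=\sum_{\pi}\sgn\pi\prod_{I}M_{I,\pi(I)}
\]
over all bijections $\pi\colon\Actors\to\Roles$, with columns in $\min$-order. Under the $\min$ identification, $\sgn\pi$ is the sign of the resulting permutation of $[n]$.

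Second, I show that only $\varepsilon$-candidates survive. For any $\pi$, consider each ghost $g$ and its performer $I=\pi^{-1}(g)$. Note that $I$ is a unit interval $I_j$, and $g$ is a junction, so exactly one of $I\ilo g$ and $g\ilo I$ holds. In the first case the entry is $-[\varepsilon_g=-1]W(x_I\to y_g)$; in the second it is $[\varepsilon_g=+1]W(x_I\to y_g)$. The bracket is therefore nonzero exactly when $\pi$ meets the condition of \Cref{def:candidate} at $g$. So every $\pi\notin\Candidates$ has a vanishing product.

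For $\pi\in\Candidates$, each ghost entry equals $\varepsilon_g W(x_I\to y_g)$: it is $-1\cdot W$ when $\varepsilon_g=-1$ and $+1\cdot W$ when $\varepsilon_g=+1$. Heir entries are plain weights. Hence
\[
\prod_I M_{I,\pi(I)}=\Bigl(\prod_g\varepsilon_g\Bigr)\prod_I W(x_I\to y_{\pi(I)}).
\]

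Third, I combine the signs. By \Cref{lem:gasgn-eval}, $\sgn\pi\prod_g\varepsilon_g=\gasgnof{\pi}$, which gives the first equality of \eqref{eq:restricted-leibniz}. For the second equality, I expand each $W(x_I\to y_{\pi(I)})$ as the sum of $w(P_I)$ over paths $P_I$ from $x_I$ to $y_{\pi(I)}$ (\Cref{def:path-weight}). Distributing the product over $I$ turns $\prod_I W$ into a sum over path families $\paths$ of $w(\paths)$, and these are exactly the castings with bijection $\pi$ (\Cref{def:casting}).

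There is no real obstacle here. The only point needing care is the sign bookkeeping in step two, namely checking that the $-$ in the $I\ilo g$ case pairs with $\varepsilon_g=-1$ to give exactly $\varepsilon_g$. The other thing to confirm is that the row-side relation is always comparable; it is, because actors are unit intervals and so never contain a junction in their interior. In the continuous setting the path expansion is only formal, but the statement concerns the discrete spacetime graph, so finite sums suffice (or the formal sums in $R$ when the path sets are infinite).
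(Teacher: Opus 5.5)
Your proposal is correct and follows the paper's proof essentially step for step. Both arguments run the Leibniz expansion, use the ghost-column brackets to kill every non-$\varepsilon$-candidate term, and factor out $\prod_g \varepsilon_g$; they then convert the sign via \Cref{lem:gasgn-eval} and expand each $W$ into a sum over paths. Your extra checks are exactly the details the paper leaves implicit: that an actor and a junction are always $\ilo$-comparable, and that the minus sign in the $I \ilo g$ entries pairs with $\varepsilon_g=-1$.
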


\begin{proof}
The Leibniz formula sums over all bijections~$\pi$, but each ghost column
vanishes off its sign-selected side, so only the $\varepsilon$-candidates survive
(\Cref{def:candidate}). For an $\varepsilon$-candidate the column product factors as
$\prod_I M_{I,\pi(I)} = \bigl(\prod_g \varepsilon_g\bigr)
\prod_I W(x_I \to y_{\pi(I)})$, and $\sgn\pi \cdot \prod_g \varepsilon_g =
\gasgnof{\pi}$ by \Cref{lem:gasgn-eval}; expanding each weight
$W$ as a sum over paths turns the product into $\sum_\paths w(\paths)$.
\end{proof}

This identity is the starting point for the rest of the proof, which
evaluates the signed sum by cancellation.

\fnote{%
  prose={The tournament $\iloeps$ is total but not transitive.},
  lean={Coalescence.Entity.ltEps}}%
\begin{remark}[A tournament, not an order]\label{rmk:tournament}
The failure of transitivity is concrete. For the heir $H = [1,4)$
carrying ghosts $2, 3$ with $\varepsilon_2 = +1$ and
$\varepsilon_3 = -1$, the comparisons $3 \iloeps H \iloeps 2 \iloeps 3$
close a cycle, so $\iloeps$ is a genuine tournament with no underlying
linear order. No linear order of the roles reproduces $\gasgn$; what is
well defined is the inversion count $N(\pi)$, which tallies pairs and
needs no transitivity, and with it the sign
\eqref{eq:ghost-adjusted-sign}. This is why one cannot
reorder the columns to remove the ghost signs, and why the $\min$-order
of the introduction is only a device for writing the determinant down.
That device costs symmetry: $\min$ selects the left endpoint of each
interval, a left--right choice, whereas $\iloeps$ is built only from the
label order and the side each ghost rests on, which simply reverse when
the line is reflected. The restricted Leibniz expansion is in this sense
even-handed---it privileges neither end of the line, naming no positive
side---where the determinant, to gain a compact formula, had to.
\end{remark}

Finally, $\gasgn$ inherits the one property of the permutation sign that
the sign-reversing involution of \Cref{sec:involution} relies on.

\fnote{%
  prose={Composing with a transposition negates the ghost-adjusted sign.},
  lean={Coalescence.PlainAssignment.sgnEps_composeWithActorSwap}}%
\begin{corollary}[Transposition rule]\label{cor:gasgn-transposition}
Composing $\pi$ with a transposition negates the ghost-adjusted sign:
\[
\gasgnof{(I\;J) \circ \pi} = -\gasgnof{\pi}.
\]
\end{corollary}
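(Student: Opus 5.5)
The transposition rule follows directly from \Cref{lem:gasgn-eval}, which gives $\gasgnof{\pi} = \sgn \pi \prod_{g \in \Ghosts} \varepsilon_g$ for \emph{every} bijection $\pi\colon \Actors \to \Roles$, not only for $\varepsilon$-candidates. The plan is to apply the lemma to both $\pi$ and $(I\;J)\circ\pi$ and compare the two expressions factor by factor.

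First, I will clarify what $(I\;J)\circ\pi$ means so that it is again a bijection $\Actors \to \Roles$. Here $I \neq J$ are actors and the transposition exchanges the roles assigned to $I$ and $J$, so $\pi' := \pi \circ (I\;J)$ in the strict composition convention. Whichever convention is meant, $\pi'$ differs from $\pi$ by a single transposition. Through the $\min$-identification of $\Actors$ and $\Roles$ with $[n]$, this makes $\sgn \pi' = -\sgn \pi$, by multiplicativity of the permutation sign.

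Second, I will observe that the factor $\prod_{g \in \Ghosts} \varepsilon_g$ depends only on the final state $\FinalState$ and not on the bijection. The ghost signs are fixed by the positions $y_g$ and $y_{\heir(g)}$ through \Cref{def:ghost-sign}. Applying \Cref{lem:gasgn-eval} to both $\pi$ and $\pi'$ then gives $\gasgnof{\pi'} = \sgn\pi' \prod_g \varepsilon_g = -\sgn\pi \prod_g \varepsilon_g = -\gasgnof{\pi}$.

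There is no real obstacle, since the lemma is stated for arbitrary bijections and not just candidates. The only point needing care is that the tournament $\iloeps$ is non-transitive, so one cannot argue directly with the inversion count $N(\pi)$ as if it came from a linear order. Routing the argument through the $\min$-order via \Cref{lem:gasgn-eval} avoids this issue entirely.
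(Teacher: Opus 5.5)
Your proposal is correct and is essentially the paper's own proof: apply \Cref{lem:gasgn-eval}, note that $\prod_{g\in\Ghosts}\varepsilon_g$ is fixed by the final state and not by the bijection, and use that a single transposition negates $\sgn$. Your closing caution is not actually needed. Swapping the images of two actors changes $N(\pi)$ by an odd amount even for a non-transitive tournament, because each intermediate actor changes it by an even amount and each outer actor leaves it unchanged. Still, routing through the lemma, as you do, is exactly what the paper does.
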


\begin{proof}
Since $\prod_g \varepsilon_g$ is a constant, \Cref{lem:gasgn-eval} gives
$\gasgn = \sgn \cdot \prod_g \varepsilon_g$, and a transposition negates
$\sgn$.
\end{proof}

\section{Attribution: performance to casting}
\label{sec:proof-attribution}

A performance fixes the genealogy of the
coalescences and the ghost paths, but it does not record which initial
actor ends up at which final entity. \emph{Attribution} supplies that
missing correspondence---the casting underlying the performance.

Concretely, attribution follows each initial particle through the
performance one coalescence at a time. At each binary coalescence two
incoming particles meet; the rule below sends one onward as the heir
and the other to the ghost just created. Carrying out this rule at
every coalescence both names the final entity each particle
reaches---the bijection $\pi$ that the casting carries---and glues, in
parallel, the path each particle travels. We describe the rule at a
single coalescence, iterate it to define attribution and read off that
it yields an $\varepsilon$-candidate bijection.

\subsection{The two-particle case}
\label{sec:two-particle}

We first describe what happens at a single coalescence of two
particles, then explain how to combine these local operations.

By consecutivity (\Cref{prop:consecutivity}), coalescing particles
are always adjacent intervals. Write $I^-$ and $I^+$ for the left
and right incoming intervals at a coalescence, $H$ for the heir,
and $g$ for the
ghost. (In interval notation: $I^- = [a,g)$, $I^+ = [g,c)$, $H = [a,c)$.)

Four path segments meet at the coalescence vertex
(\Cref{fig:simplified-coalescence-a,fig:simplified-coalescence-neg-a}):
two incoming and two outgoing (one to the ghost position $y_g$,
one continuing as the heir toward further coalescences or the final
position).
The \emph{far-side principle} connects each incoming particle to the
outgoing segment on the opposite side: a particle arriving from the
left interval~$I^-$ leaves to the right, and one arriving from the
right interval~$I^+$ leaves to the left.

The ghost sign $\varepsilon_g$ encodes whether the ghost ends left
or right of the heir, determining the gluing
(\Cref{fig:simplified-coalescence-b,fig:simplified-coalescence-neg-b}):
\begin{itemize}
\item $\varepsilon_g = +1$ (ghost left of heir): the right interval
  $I^+$ becomes the ghost;
\item $\varepsilon_g = -1$ (ghost right of heir): the left interval
  $I^-$ becomes the ghost.
\end{itemize}
This defines a \emph{local bijection} $\lambda\colon\{I^-, I^+\}\to \{H, g\}$
(\Cref{fig:simplified-coalescence-label,fig:simplified-coalescence-neg-label}).
The two cases look asymmetric, but each is the inversion-free choice for
the tournament~$\iloeps$: the far-side principle draws the ghost from the
side of its heir opposite where it comes to rest, the placement that
contributes no $\iloeps$-inversion at~$g$. So the local bijection counts
zero inversions and, by \eqref{eq:ghost-adjusted-sign}, a single
coalescence is ghost-adjusted-sign-neutral either way:
\[
\gasgnof{\lambda} = +1.
\]
Across all the coalescences these neutral factors multiply to
$\gasgnof{\pi} = +1$ for the whole casting
(\Cref{prop:attribution-positive}).

\subfile{../figures/fig-simplified-coalescence}

\subsection{From local to global}

\subsubsection{The attribution map}

\fnote{%
  prose={Attribution: the casting underlying a performance (the inverse of
    casting), read off hop by hop.},
  lean={Coalescence.attribute_chain},
  py={combinatorics.attribution.attribute_chain}}%
\begin{definition}[Attribution]\label{def:attribution}
\emph{Attribution} constructs a casting $(\pi, \paths)$ from a
performance by applying the local gluing rule at each coalescence.
\end{definition}

We describe attribution assuming every coalescence is binary;
\Cref{sec:proof-high-indegree} reduces the general case to this one.
For each initial particle $I$, follow its path through the binary
coalescences, applying the local gluing rule at each.
The path
terminates either at an heir position or at a ghost position. This
process produces:
\begin{itemize}
\item the endpoint $\pi(I)$ (the final entity where $I$ ends up);
\item the glued path $P_I$ from $x_I$ to $y_{\pi(I)}$.
\end{itemize}
Iterating over all initial particles yields the output of attribution:
a casting $(\pi, (P_I)_{I \in \Actors})$.

\begin{example}[Single coalescence: \Cref{fig:intro-21}]
In \Cref{fig:intro-21}, particles $I_1$ and $I_2$ coalesce at~$c$.
The ghost ends at $y_g > y_H$, so $\varepsilon_g = -1$ (ghost right of
heir). By the far-side principle, the left particle $I_1$ becomes the ghost.
The output is the casting $(\pi, (P_{I_1}, P_{I_2}, P_{I_3}))$ where:
\begin{itemize}
\item $P_{I_1}$: from $x_1$ to $c$, then to $y_g$; endpoint $\pi(I_1) = g$;
\item $P_{I_2}$: from $x_2$ to $c$, then to $y_H$; endpoint $\pi(I_2) = H$;
\item $P_{I_3}$: from $x_3$ to $y_{H'}$ (no coalescence);
  endpoint $\pi(I_3) = H'$.
\end{itemize}
See \Cref{fig:actor-bijection}.
\end{example}

\subsubsection{High-indegree vertices}
\label{sec:proof-high-indegree}
It remains to lift the binary assumption. When $r > 2$ intervals meet
at one vertex they form a consecutive run $J_1, \ldots, J_r$
(consecutivity); process it by nested pairings in label order,
$((J_1\, J_2)\, J_3) \cdots J_r$, each pairing producing one ghost and a
continuing heir, which completes the definition of attribution. This
left-to-right order is also the one rehearsal uses
(\Cref{sec:rehearsal-algorithm}), so attribution and rehearsal agree.

\begin{example}[Full coalescence: \Cref{fig:n3-pi1,fig:n3-pi2}]
\label{ex:attribution-n3}
Three particles $I_1$, $I_2$, $I_3$ all coalesce into one heir
$H = [1,4)$, with ghost signs $\varepsilon_2 = +1$ (ghost~$2$ left of
heir) and $\varepsilon_3 = -1$ (ghost~$3$ right of heir).

In \Cref{fig:n3-pi1}, junction~$3$ fires first: $I_2$ and $I_3$ meet.
Since $\varepsilon_3 = -1$, the left particle $I_2$ becomes the ghost.
Next, junction~$2$ fires: $I_1$ meets the merged interval (carried by
$I_3$). Since $\varepsilon_2 = +1$, the right particle ($I_3$) becomes
the ghost. The output is the casting $(\pi_1, (P_{I_1}, P_{I_2}, P_{I_3}))$:
\begin{itemize}
\item $P_{I_1}$: from $x_1$ to junction~$2$, then to $y_H$;
  endpoint $\pi_1(I_1) = H$;
\item $P_{I_2}$: from $x_2$ to junction~$3$, then to $y_3$;
  endpoint $\pi_1(I_2) = 3$;
\item $P_{I_3}$: from $x_3$ to junction~$3$, to junction~$2$, then to $y_2$;
  endpoint $\pi_1(I_3) = 2$.
\end{itemize}

In \Cref{fig:n3-pi2}, the same final state arises from a different
coalescence order (junction~$2$ fires first), producing a different
casting $(\pi_2, (P'_{I_1}, P'_{I_2}, P'_{I_3}))$ with
$\pi_2(I_1) = 3$, $\pi_2(I_2) = 2$, $\pi_2(I_3) = H$.
\end{example}

\subfile{../figures/fig-n3-example}

\subsection{Assignments: a bookkeeping device}
\label{sec:proof-assignments}

To recast attribution---and, later, to run rehearsal
(\Cref{sec:rehearsal})---we record one compact device. An
\emph{assignment} captures a coalescence in progress: a partition of
the initial particles into their current clusters, paired with a
bijection matching actors to the resulting entities. This is the
actor-to-entity language of a casting's bijection $\pi$
(\Cref{def:casting}), with all spacetime and path weights forgotten.
Attribution reaches, at the final state, an assignment whose bijection
is exactly that~$\pi$; the construction below builds it one coalescence
at a time, starting from the identity assignment.

\subsubsection{Partitions}
\begin{definition}[Partition]\label{def:partition}
A \emph{partition}~$P$ records a coalescence pattern by:
\begin{itemize}
	\item its \emph{active intervals} $\Active(P)$---a list of half-open
	intervals whose disjoint union is $[1, n+1)$, recording the
	current clusters of initial particles;
	\item its \emph{ghost junctions} $\Ghosts(P) \subseteq \Junctions$,
	recording which interior junctions have been freed by past
	coalescences.
\end{itemize}
\end{definition}
The \emph{entities} of $P$ are $\Active(P) \cup \Ghosts(P)$. (We
reserve the names \emph{heir} and $\Heirs$ for the final state; the active
intervals of a general partition are transient clusters.)

Two extremal partitions appear repeatedly: the \emph{initial
	partition}, whose active intervals are the unit intervals $I_1,
\ldots, I_n$ and whose ghost set is empty; and the
\emph{final-state partition} $P_\FinalState$, whose active intervals
are the heir intervals $\Heirs$ from \Cref{sec:setup-final} and
whose ghost set is $\Ghosts$. The entities of $P_\FinalState$ are
exactly the roles $\Roles$, so the notations
$\Active(P_\FinalState) = \Heirs$ and
$\Ghosts(P_\FinalState) = \Ghosts$ are consistent.

A coalescence merges two adjacent active intervals into their union
and adds the junction between them to the ghost set; all other
intervals and ghosts are unchanged. Every partition in the proof
arises this way, from the initial partition; in particular every
junction lying strictly inside an active interval is a ghost---a
boundary dissolved by an earlier merge.

\subsubsection{Assignments and diagonality}
\fnote{%
  prose={The heir/ghost assignment: a partition together with a bijection
    from actors to its entities.},
  lean={Coalescence.PlainAssignment}}%
\begin{definition}[Assignment]\label{def:assignment}
	An \emph{assignment} is a pair $\pi = (P, \pi)$ where $P$ is a
	partition and
	\[
	\pi\colon \Actors \to \Active(P) \cup \Ghosts(P)
	\]
	is a bijection from actors to entities of $P$. We write $\pi$ both
	for the pair and for the bijection; the underlying partition is
	recovered as $P_\pi$ when needed.
\end{definition}

\fnote{%
  prose={The diagonal (initial actor-to-interval) assignment.},
  lean={Coalescence.DiagonalAssignment},
  py={combinatorics.diagonal_assignment.DiagonalAssignment}}%
\begin{definition}[Diagonal assignment]\label{def:diagonal}
An assignment $\pi = (P, \pi)$ is \emph{diagonal} if, for every active
interval $I = [a, b)$ of $P$, the bijection $\pi$ restricts to a
bijection
\[
\{\,\text{actors with label in } [a, b)\,\}
\;\xrightarrow{\ \sim\ }\;
\{I\} \cup \{\, g \in \Ghosts(P) : a < g < b \,\}
\]
from the $b - a$ actors labelled in $I$ onto $I$ together with the
ghost junctions interior to~$I$. The \emph{heir actor} of $I$
under~$\pi$ is the unique actor sent to~$I$, namely $\pi^{-1}(I)$;
every other actor labelled in~$I$ performs an interior ghost.
\end{definition}

The \emph{identity assignment} pairs the initial partition with the
bijection $I_j \mapsto I_j$. It is trivially diagonal: each unit
interval contains a single actor and no interior junctions, so the
required bijection sends that lone actor to its interval. A bijection
$\pi\colon \Actors \to \Roles$ pairs with the final-state partition
$P_\FinalState$ to give an assignment $\pi = (P_\FinalState, \pi)$;
this is how the $\varepsilon$-candidate bijections of \Cref{def:candidate} appear as
assignments at $P_\FinalState$.

Diagonality thus tiles each active interval: the $b - a$ actors
labelled in $I = [a, b)$ are partitioned into its heir actor and the
performers $\pi^{-1}(g')$ of its $b - a - 1$ interior ghosts (every
junction interior to an active interval is a ghost, by the partition
structure above). The rigidity argument of
\Cref{sec:rigidity-under-planarity} uses this tiling directly.

\subsubsection{The assignment poset}

\fnote{%
  prose={The assignment poset order: $\pi'$ extends $\pi$ by further
    coalescences, fixing the performer of every ghost $\pi$ already has.},
  lean={Coalescence.IntervalPartition.Coarsens}}%
\begin{definition}[The assignment preorder]
	\label{def:assignment-order}
	For assignments $\pi$ and $\pi'$, write $\pi \leq \pi'$ if:
	\begin{enumerate}[label=(\roman*)]
		\item the partition $P_{\pi'}$ is coarser than $P_\pi$
		(equivalently, $\Ghosts(P_\pi) \subseteq \Ghosts(P_{\pi'})$);
		\item for every ghost $g \in \Ghosts(P_\pi)$,
		$\pi'^{-1}(g) = \pi^{-1}(g)$.
	\end{enumerate}
\end{definition}

Reading: $\pi'$ extends $\pi$ by further coalescences without
revising the performer $\pi^{-1}(g)$ of any ghost $\pi$ already has. The
identity assignment is the minimum element; every
assignment lies above it. Strictly, $\leq$ is a preorder---%
antisymmetry can fail for assignments that differ only on their
active-interval images---though on diagonal assignments it is a
genuine partial order. No argument below uses antisymmetry, and we
keep the customary shorthand \emph{assignment poset}.

\subsection{Attribution as a chain}

\fnote{%
  prose={Attribution as an increasing chain of diagonal assignments, one
    binary coalescence per step.},
  lean={Coalescence.attribute_chain},
  py={combinatorics.attribution.attribute_chain}}%
\begin{proposition}[Attribution as a chain of diagonal assignments]
\label{prop:attribution-chain}
Attribution produces an increasing chain in the assignment poset,
from the identity assignment to the diagonal assignment
$(P_\FinalState, \pi)$, advancing by one binary coalescence per
step. Every assignment in the chain is diagonal, and the bijection of the
final assignment is the bijection $\pi$ that the casting
carries (\Cref{def:casting}).
\end{proposition}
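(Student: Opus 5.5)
The proof is an induction along the coalescences of the performance, taken in a chronological order compatible with the fixed linear extension $\le$. Multi-way mergers are first split into nested binary pairings $((J_1\,J_2)\,J_3)\cdots J_r$, as in \Cref{sec:proof-high-indegree}. This gives a sequence of binary coalescences $e_1,\dots,e_m$, one per ghost, and I define assignments $\pi_0\le\pi_1\le\dots\le\pi_m$ as follows.

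$\pi_0$ is the identity assignment, which is diagonal as noted after \Cref{def:diagonal}. Write $P_k$ for the partition underlying $\pi_k$. Let $e_{k+1}$ merge the adjacent active intervals $I^-=[a,g)$ and $I^+=[g,c)$ of $P_k$. These are adjacent by \Cref{prop:consecutivity}, so $g$ is the junction between them. Let $A^-=\pi_k^{-1}(I^-)$ and $A^+=\pi_k^{-1}(I^+)$ be the heir actors. These are precisely the actors whose partially glued paths arrive at the coalescence vertex as the two incoming particle segments.

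The partition $P_{k+1}$ replaces $I^-,I^+$ by $[a,c)$ and adds $g$ to the ghost set. The bijection $\pi_{k+1}$ agrees with $\pi_k$ off $\{A^-,A^+\}$. On $\{A^-,A^+\}$ it applies the local bijection $\lambda$ of \Cref{sec:two-particle}. If $\varepsilon_g=+1$, it sends $A^+\mapsto g$ and $A^-\mapsto[a,c)$. If $\varepsilon_g=-1$, it sends $A^-\mapsto g$ and $A^+\mapsto[a,c)$. This is exactly the gluing attribution performs, so it matches \Cref{def:attribution}.

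Three checks are needed for each step $\pi_k\to\pi_{k+1}$.

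\emph{Order.} $\pi_k\le\pi_{k+1}$ holds because the ghost set only grows and old ghost performers are untouched.

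\emph{Bijection.} $\pi_{k+1}$ is a bijection onto the entities of $P_{k+1}$. The two lost entities $I^\pm$ are replaced by the two new ones $[a,c)$ and $g$.

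\emph{Diagonality.} For intervals other than $[a,c)$, nothing changes. For $[a,c)$, the actors labelled in $[a,g)$ were mapped by $\pi_k$ onto $I^-$ plus the ghosts interior to $I^-$, and similarly for $[g,c)$. After the step, the union of these actors maps onto $[a,c)$, the ghost $g$, and the ghosts interior to $I^-$ and $I^+$. These are exactly $[a,c)$ together with its interior ghosts in $P_{k+1}$, since the interior junctions of $[a,c)$ are those of $I^\pm$ plus $g$.

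After the last step the partition is $P_\FinalState$. Each ghost of $\Ghosts$ is created exactly once, and the maximal intervals free of non-ghost junctions are the heirs $\Heirs$. The final bijection sends each actor to the entity at the end of its glued path, which by construction is $\pi(I)$ from \Cref{def:attribution}.

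The main obstacle is justifying that the incoming particles at each coalescence are exactly the current heir actors $\pi_k^{-1}(I^\pm)$. Equivalently, one must show that an actor's glued path follows the cluster's tree edge precisely while it is that cluster's heir actor, and leaves along a ghost path once it becomes a performer. I would prove this as part of the same induction, as an invariant: at stage $k$, for each active interval $J$, the glued path of $\pi_k^{-1}(J)$ currently ends at the tip of $J$'s tree edge. Ghost performers' paths are finished and are never touched again, since ghost paths do not interact. The invariant propagates because $\lambda$ sends the heir of $[a,c)$ onward along the outgoing tree edge.

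A secondary point is that the order of independent coalescences could be an issue. Coalescences in disjoint intervals commute, so any chronological ordering yields the same final $\pi$. For simultaneous multi-way merges, the fixed nested pairing makes the result well defined.
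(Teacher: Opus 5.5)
Your proposal is correct and takes the same route as the paper: induct along the binary coalescences, splitting multi-way mergers by the nested pairing, and check at each step that the far-side update keeps the assignment diagonal and moves up the poset, since existing ghost performers are untouched. Your version is simply more explicit: the paper treats bijectivity, the identification of the incoming particles with the current heir actors, and the arrival at $(P_\FinalState,\pi)$ as immediate from the construction.
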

\begin{proof}
This is immediate from the construction above. Each binary coalescence
merges two adjacent active intervals $I^-$ and $I^+$ into $H$ and frees
the junction $g$ between them; the far-side principle makes one of the
two incoming actors the performer of the new ghost $g$ and lets the
other carry the merged interval, so the actors labelled in $H$ are
again in bijection with $H$ and its interior ghosts---the assignment
stays diagonal. The step also moves \emph{up} the poset
(\Cref{def:assignment-order}): it only coarsens the partition and adds
the new ghost, while every actor already mapped to a ghost keeps that
image, so the performers already assigned are preserved.
\end{proof}

\subsection{$\varepsilon$-candidacy emerges}

The chain exposes two properties of the casting, one read off each
coalescence. The first is $\varepsilon$-candidacy. The right-hand side of the formula
is the restricted Leibniz expansion (\Cref{prop:restricted-leibniz}), a
sum over \emph{$\varepsilon$-candidate} castings; for the casting attribution produces
to appear there at all---rather than be silently absent---its bijection
must be an $\varepsilon$-candidate. It is.

\fnote{%
  prose={Attribution always yields an $\varepsilon$-candidate; the round-trip
    casting is provably one in
    \texttt{Combinatorics/Operations/Inverse.lean}.},
  lean={Coalescence.canonical_casting}}%
\begin{proposition}[Attribution yields an $\varepsilon$-candidate]\label{prop:attribution-candidate}
Let $\casting = (\pi, \paths)$ be the casting that attribution produces
from a performance. Then $\pi$ is an $\varepsilon$-candidate.
\end{proposition}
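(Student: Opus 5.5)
The plan is to read $\varepsilon$-candidacy off the attribution chain of \Cref{prop:attribution-chain}, ghost by ghost. The key observation is that the performer of a ghost is fixed at the moment the ghost is created and never revised afterwards. Fix $g \in \Ghosts$, and look at the step of the chain at which junction $g$ is freed. This is the binary coalescence at $c_g$; for a high-indegree vertex it is the corresponding step in the nested left-to-right pairing of \Cref{sec:proof-high-indegree}. At that step two adjacent active intervals $I^- = [a,g)$ and $I^+ = [g,c)$ merge into $[a,c)$. The assignment just before this step is diagonal, so the actor carrying $I^-$ (its heir actor) has label in $[a,g)$, and the actor carrying $I^+$ has label in $[g,c)$.

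By the far-side rule, if $\varepsilon_g = +1$ the carrier of $I^+$ becomes the performer of $g$. Its label $[j,j+1)$ has $j \geq g$, so $g \ilo \pi^{-1}(g)$ in the label order. If $\varepsilon_g = -1$ the carrier of $I^-$ becomes the performer. Its label has $j+1 \leq g$, so $\pi^{-1}(g) \ilo g$.

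Next I would invoke the monotonicity in \Cref{prop:attribution-chain}. The chain increases in the assignment poset, so condition (ii) of \Cref{def:assignment-order} says that the performer of $g$ is unchanged at every later step. In particular it is unchanged in the final assignment $(P_\FinalState,\pi)$. Hence the bijection $\pi$ carried by the casting satisfies exactly the condition of \Cref{def:candidate} for $g$. Since $g$ was arbitrary, $\pi \in \Candidates$.

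The step I expect to need the most care is the claim that the incoming particle on the $I^-$ side really carries an actor labelled inside $I^-$. This is where diagonality of the intermediate assignments is used: the heir actor of an active interval is an actor labelled in that interval. It also requires that the two meeting clusters are exactly $[a,g)$ and $[g,c)$, which follows from consecutivity (\Cref{prop:consecutivity}) and the definition of $c_g$. For multi-way mergers I would check that the nested pairing $((J_1 J_2) J_3)\cdots$ still presents, at the step freeing $g$, a left cluster ending at $g$ and a right cluster starting at $g$. This holds because the accumulated left block is $J_1\cup\cdots\cup J_i$, which ends at the junction $g$ between $J_i$ and $J_{i+1}$.
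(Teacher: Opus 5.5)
Your proof is correct and follows the paper's argument: you fix the coalescence that frees $g$, observe that the two incoming actors are labelled in $[a,g)$ and $[g,c)$ respectively, and let the far-side rule place the performer on the side dictated by $\varepsilon_g$. You also make explicit three points the paper leaves implicit: diagonality locates the incoming actors' labels, the poset monotonicity ensures the performer of $g$ is never revised, and the nested pairing is checked at multi-way mergers. This tightens the paper's argument without changing its route.
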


\begin{proof}
Fix a ghost $g$, created when the adjacent intervals $I^- = [a, g)$
and $I^+ = [g, c)$ merge. The interval $[a, c)$ is assembled from the
initial actors $I_a, \ldots, I_{c-1}$, so the particle arriving at
this coalescence along $I^-$ traces back to an initial actor whose
label lies in $[a, g)$, and the one arriving along $I^+$ to an
initial actor whose label lies in $[g, c)$. The far-side principle
hands the ghost role to one of the two incoming intervals according
to the ghost sign, and the ghost is performed by that interval's
incoming particle.

If $\varepsilon_g = -1$ (ghost right of heir), the ghost comes from
the left interval $I^- = [a, g)$, so its performer $\pi^{-1}(g)$ has
label in $[a, g)$; thus $\pi^{-1}(g) \ilo g$, the side
\Cref{def:candidate} requires when $\varepsilon_g = -1$. If
$\varepsilon_g = +1$ (ghost left of heir), the ghost comes from the
right interval $I^+ = [g, c)$, so $\pi^{-1}(g)$ has label in
$[g, c)$; thus $g \ilo \pi^{-1}(g)$, the side required when
$\varepsilon_g = +1$. In both cases the performer falls on the side
the ghost sign dictates, so $\pi$ is an $\varepsilon$-candidate.
\end{proof}

\subsection{Positivity}

The second property is the sign. $\varepsilon$-candidacy reads, off each
coalescence, which \emph{side} the ghost is drawn from; the sign is the
companion
reading---each coalescence moves the inversion count only by an even
amount, so the count stays even and the casting is \emph{positive}.

\fnote{%
  prose={Attribution's output is positive (ghost-adjusted sign $+1$); part
    of the layer-1 sign identity.},
  lean={Coalescence.MainTheorem.main_theorem_layer1}}%
\begin{proposition}[Attribution yields a positive casting]
\label{prop:attribution-positive}
The casting $(\pi, \paths)$ that attribution produces from a performance
has ghost-adjusted sign $\gasgnof{\pi} = +1$.
\end{proposition}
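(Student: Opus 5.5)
We argue by induction along the attribution chain of \Cref{prop:attribution-chain}. For each assignment $\pi_t = (P_t, \pi_t)$ in the chain we define an inversion count $N_t$ by the same recipe as $N(\pi)$. The entities of $P_t$ are its active intervals and its ghosts. We extend the label order to a tournament $\iloeps$ on these entities by deciding each pair (ghost $g$, active interval containing $g$) according to $\varepsilon_g$: $g \iloeps J$ if $\varepsilon_g=-1$, and $J \iloeps g$ if $\varepsilon_g=+1$. Here $\varepsilon_g$ is the final ghost sign, and it is well defined because the ghost set only grows along the chain. Then $N_t$ counts the pairs of actors $I \ilo I'$ with $\pi_t(I') \iloeps \pi_t(I)$. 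The identity assignment has $N_0 = 0$. The last assignment is $(P_\FinalState,\pi)$, so its count is $N(\pi)$. It therefore suffices to show that each binary step preserves the parity of $N_t$; in fact we expect it preserves $N_t$ exactly.

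Consider one step, where adjacent active intervals $I^-=[a,g)$ and $I^+=[g,c)$ merge into $H=[a,c)$ and create the ghost $g$. Let $A^-$ and $A^+$ be the heir actors of $I^-$ and $I^+$. After the step, one of them performs $g$ and the other performs $H$; every other actor keeps its image. We sort the actor pairs whose comparison could change by the roles involved.

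\textbf{(a) Pairs not involving $A^\pm$.} These are unaffected, with one exception to check. An interior ghost $g'$ of $I^-$ or $I^+$ was compared by $\iloeps$ with $I^\mp$ via the label order, and is now compared with $H$ via $\varepsilon_{g'}$. We must check that the relevant counts agree, using diagonality (\Cref{def:diagonal}): all actors involved lie inside $[a,c)$.

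\textbf{(b) Pairs of $A^\pm$ with actors outside $[a,c)$.} These compare an outside role with roles inside $[a,c)$. Since $I^-$, $I^+$, $H$, and $g$ all lie inside $[a,c)$, the label order gives the same answer before and after, so these pairs contribute the same inversions.

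\textbf{(c) The pair $(A^-,A^+)$.} Before the step $I^- \ilo I^+$, and $A^-$ lies in $I^-$ and $A^+$ in $I^+$, so $A^- \ilo A^+$ and there is no inversion. After the step, the far-side principle gives the pair $(H,g)$ or $(g,H)$ oriented so as to contribute no inversion. This is exactly the computation $\gasgnof{\lambda}=+1$ of \Cref{sec:two-particle}.

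\textbf{(d) Pairs of $A^\pm$ with performers of interior ghosts $g'$ of $I^\pm$.} These are the delicate terms. We must show that they contribute the same number of inversions before and after, or at least a change of even parity.

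We expect the main obstacle to be the bookkeeping in (a) and (d): the active interval containing $g'$ switches from $I^\pm$ to $H$, and $\iloeps$ is not transitive (\Cref{rmk:tournament}). A cleaner route, which we would try first, is to avoid the tournament altogether and use \Cref{lem:gasgn-eval}, which gives $\gasgnof{\pi}=\sgn\pi\prod_g\varepsilon_g$ for the final bijection. Each step changes the $\min$-ordered bijection in a controlled way. If $A^+$ takes the heir role, its $\min$-label moves from $g$ to $a$; if $A^-$ takes $g$, its label moves from $a$ to $g$. We would then track $\sgn$ of the $\min$-permutation together with the product of the $\varepsilon$'s created so far. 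In the first case the step composes with a transposition of the labels $a$ and $g$, which happens exactly when $\varepsilon_g=-1$ and so flips both factors. In the second case the step leaves the labels fixed and the factor $+1$ is unchanged. Checking the two cases of the far-side rule this way gives positivity at every step, and hence $\gasgnof{\pi}=+1$. Finally, high-indegree vertices are handled by the nested binary pairings of \Cref{sec:proof-high-indegree}, so the binary analysis covers them.
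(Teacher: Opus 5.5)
Your second (``cleaner'') route is correct, and it is genuinely different from the paper's. The paper stays with the tournament throughout. It extends $N$ to every intermediate assignment and proves (\Cref{lem:coalescence-inversion-parity}) that a step changes $N$ by an even amount. The key input is the identity $[e\iloeps L]+[e\iloeps R]\equiv[e\iloeps H]+[e\iloeps g] \pmod 2$ of \eqref{eq:parity-identity}, whose only non-trivial case, $e$ a ghost interior to $L$ or $R$, is exactly your unresolved item~(d).

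You instead carry the invariant $\sgn(\text{$\min$-labelled bijection})\cdot\prod\varepsilon_g$, with the product over the ghosts fired so far:
\begin{itemize}
\item when $\varepsilon_g=+1$, the far-side rule sends $A^-\mapsto H$ and $A^+\mapsto g$, so no $\min$-label moves and the new factor is $+1$;
\item when $\varepsilon_g=-1$, it sends $A^-\mapsto g$ and $A^+\mapsto H$, which transposes the labels $a$ and $g$ while the new factor is $-1$.
\end{itemize}
\Cref{lem:gasgn-eval} then converts the invariant into $\gasgnof{\pi}$ at the end.

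Two small repairs are needed.
\begin{itemize}
\item Your sentence introducing the cases is garbled. ``$A^+$ takes the heir role'' and ``$A^-$ takes $g$'' describe the \emph{same} case, $\varepsilon_g=-1$; the other case is $A^-\mapsto H$, $A^+\mapsto g$. Your concluding case analysis is nonetheless right.
\item You should check that $\min$ maps the entities of every intermediate partition bijectively onto $[n]$. An active interval $[u,v)$ contributes $u$ together with its interior junctions, all of which are ghosts. So the intermediate $\min$-labelled maps really are permutations.
\end{itemize}

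As for what each route buys: yours is shorter and needs no case analysis, because the comparison between the tournament and the $\min$-order has already been paid for once, in \Cref{lem:gasgn-eval}. It even exhibits the $\min$-permutation of the output as a product, in firing order, of one transposition per ghost with $\varepsilon_g=-1$. The paper's argument stays inside the reflection-symmetric tournament framework advertised in \Cref{rmk:tournament}, never choosing a left endpoint. It also isolates positivity as a local identity of $\iloeps$.

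Your first route, by contrast, is incomplete as written: item~(d) is where all the content lies. Moreover, your expectation that a step preserves $N_t$ exactly is false, so only parity can be proved. Take $n=3$ with $\varepsilon_2=+1$ and $\varepsilon_3=-1$, and fire junction~$2$ and then junction~$3$ (the order producing $\pi_2$ of \Cref{ex:attribution-n3}). This takes $N$ from $0$ to $2$; it is the cycle $2\iloeps 3\iloeps H\iloeps 2$ of \Cref{rmk:tournament} at work. Also, the ``exception'' you list under~(a) is misplaced. A ghost $g'$ is compared with $I^\mp$ or $H$ only in pairs involving $A^\pm$, so that comparison belongs to~(d), and (a) has no exception.
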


We prove this at the end of the subsection, after isolating its one
nontrivial step (\Cref{lem:coalescence-inversion-parity}). The proof
tracks the inversion count $N$ of \eqref{eq:ghost-adjusted-sign} along
the chain, defined for every assignment $\sigma = (P, \rho)$, not
only the final casting: $\iloeps$ extends to the entities of $P$ by the
same rule used on the final roles---active intervals play the part of
heirs, and an active interval and a ghost in its interior are ordered by
the ghost's sign---and $N(\sigma) = \#\{\, I \ilo I' : \rho(I') \iloeps
\rho(I) \,\}$.

\fnote{%
  prose={A single coalescence changes the inversion count $N$ by an even
    amount; the parity step behind positivity.}}%
\begin{lemma}[A coalescence preserves inversion parity]
\label{lem:coalescence-inversion-parity}
Let $\sigma \to \sigma'$ be one step of the attribution chain---a
coalescence merging adjacent active intervals $L = [a, g)$ and
$R = [g, c)$ into $H = [a, c)$ and creating the ghost $g$. Then
$N(\sigma') \equiv N(\sigma) \pmod 2$.
\end{lemma}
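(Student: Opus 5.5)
We plan to compare $N(\sigma)$ and $N(\sigma')$ pair by pair. Write $\sigma = (P,\rho)$ and $\sigma' = (P',\rho')$. Let $A = \rho^{-1}(L)$ and $B = \rho^{-1}(R)$ be the two heir actors of the merging intervals; by diagonality (\Cref{prop:attribution-chain}), $A$ has label in $[a,g)$ and $B$ has label in $[g,c)$, so $A \ilo B$. By the far-side rule, $\rho'$ agrees with $\rho$ off $\{A,B\}$. On $\{A,B\}$ it is given by
\[
A \mapsto H,\ B \mapsto g \ \text{ if } \varepsilon_g = +1,
\qquad
A \mapsto g,\ B \mapsto H \ \text{ if } \varepsilon_g = -1 .
\]
Hence every actor pair avoiding $\{A,B\}$ keeps its images, and the entities involved are unchanged by the step. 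So it keeps its inversion status, since $\iloeps$ among untouched entities is unaffected. It therefore suffices to treat the pair $(A,B)$ itself, and, for each other actor $X$, the combined contribution $[\text{$X$ inverted with $A$}] + [\text{$X$ inverted with $B$}]$ modulo~$2$.

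The pair $(A,B)$ contributes no inversion before or after the step. Before, $L \ilo R$. After, either $H \iloeps g$ (when $\varepsilon_g = +1$) or $g \iloeps H$ (when $\varepsilon_g = -1$), and in each case this matches $A \ilo B$. This is exactly the ``$\gasgnof{\lambda} = +1$'' observation of \Cref{sec:two-particle}.

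For another actor $X$, let $e = \rho(X) = \rho'(X)$. We split into three cases according to the position of $e$.

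\emph{Case 1: $e$ lies outside $[a,c)$.} Then $e$ is entirely left or right of $[a,c)$, and it compares identically with $L$, $R$, $H$, and $g$. Its actor $X$ also lies outside $[a,c)$, by diagonality, so $X$ stands on the same side of both $A$ and $B$. Both indicator terms are unchanged.

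\emph{Case 2: $e = g'$ is a ghost interior to $L$, so $a < g' < g$.} Then $X$ has label in $[a,g)$, by diagonality, so $X \ilo B$. The relation of $X$ to $A$ may go either way. The comparisons of $g'$ are:
\begin{itemize}
\item before the step, $g'$ versus $L$ is decided by $\varepsilon_{g'}$, and $g' \ilo R$;
\item after the step, $g'$ versus $H$ is decided by the same $\varepsilon_{g'}$, and $g' \ilo g$.
\end{itemize}
When $\varepsilon_g = +1$, $A$ moves from $L$ to $H$ and $B$ moves from $R$ to $g$, so both comparisons are literally preserved. When $\varepsilon_g = -1$, the roles of $A$ and $B$ are exchanged. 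We then check the two subcases directly.
\begin{itemize}
\item If $X \ilo A$: the comparison with $A$ and the comparison with $B$ each face a role on the same side of $X$. Swapping which of the two comparisons goes to $A$ and which to $B$ leaves the sum of the two indicators unchanged.
\item If $A \ilo X$: the two indicators exchange their roles in a complementary way, and we verify that the sum changes by $0$ or $2$.
\end{itemize}

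\emph{Case 3: $e$ is a ghost interior to $R$.} This is the mirror image of Case~2, with $\varepsilon_g$ playing the reversed role.

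The main obstacle is Cases 2 and 3. There the comparison of a ghost with $L$ (or $R$) is governed by that ghost's own sign, not by the label order, and the actor $X$ can sit on either side of $A$ (respectively $B$). We would handle these by a small explicit table over the four combinations of $\varepsilon_g$, $\varepsilon_{g'}$ and the side of $X$ relative to the moving actor, checking in each entry that the two-term sum changes by an even number. Summing all the per-actor contributions together with the $(A,B)$ pair gives $N(\sigma') \equiv N(\sigma) \pmod 2$.
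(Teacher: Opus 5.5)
Your proposal is correct and follows the paper's proof. It uses the same reduction to the pair $(A,B)$ plus the two pairs through each other actor $X$, the same three-way split on the image $e$, and the same comparison facts: in Case~2, $e$ compares with $L$ as with $H$ and with $R$ as with $g$, and mirror-wise in Case~3.

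The one simplification you miss is that, modulo~$2$, the contribution of $X$ is $[X \ilo A]+[X \ilo B]+[e \iloeps \rho(A)]+[e \iloeps \rho(B)]$. This is symmetric in which of $A,B$ receives which image, so your subcases on $\varepsilon_g$ and on the side of $X$ are unnecessary. It suffices to check $[e \iloeps L]+[e \iloeps R] \equiv [e \iloeps H]+[e \iloeps g] \pmod 2$, which your comparison facts give term by term.
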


\begin{proof}
The actors are the fixed unit intervals, and the step changes only the
images of the two actors $p_L = \sigma^{-1}(L)$ and
$p_R = \sigma^{-1}(R)$, sending them to the new entities $H$ and $g$; every
other actor keeps its image, and the $\iloeps$-order among all surviving
entities is unchanged. So $N$ can change only through pairs of actors
meeting $\{p_L, p_R\}$: the pair $(p_L, p_R)$, and the two pairs each other
actor forms with $p_L$ and $p_R$.

Write $[\,\cdot\,] \in \{0,1\}$ for a truth value. A pair $\{x, p\}$ is an
inversion iff its label order and the $\iloeps$-order of its images
disagree, that is, iff $[x \ilo p] + [\sigma(x) \iloeps \sigma(p)]$ is odd.

\medskip\noindent\emph{The pair $(p_L, p_R)$.} The far-side principle is the
inversion-free choice for $\iloeps$ (\Cref{sec:two-particle}; the
bijections $\lambda_\pm$ are drawn in
\Cref{fig:simplified-coalescence-neg-label,fig:simplified-coalescence-label}),
so $\sigma'(p_L) \iloeps \sigma'(p_R)$ holds just as
$\sigma(p_L) = L \iloeps R = \sigma(p_R)$; with the label order of
$p_L, p_R$ fixed, the pair keeps its inversion status.

\medskip\noindent\emph{The pairs through another actor $x$.} Let
$e = \sigma(x) = \sigma'(x)$ be its unchanged image. 
Write $P(\rho)$ for the number of inversions among $\{x, p_L\}$ and
$\{x, p_R\}$ under an assignment $\rho$. By the rule above,
\[
  P(\rho) \;\equiv\; [x \ilo p_L] + [x \ilo p_R]
  + [e \iloeps \rho(p_L)] + [e \iloeps \rho(p_R)] \pmod 2 .
\]
Here $P(\sigma)$ is the count before the step, with
$\{\sigma(p_L), \sigma(p_R)\} = \{L, R\}$, and $P(\sigma')$ the count after,
with $\{\sigma'(p_L), \sigma'(p_R)\} = \{H, g\}$; the label terms and the
image $e = \sigma(x) = \sigma'(x)$ are common to both. So the change across
the step is
\[
  P(\sigma') - P(\sigma) \;\equiv\;
  [e \iloeps L] + [e \iloeps R] + [e \iloeps H] + [e \iloeps g] \pmod 2 ,
\]
which is zero: \emph{for every entity $e$,}
\begin{equation}\label{eq:parity-identity}
  [e \iloeps L] + [e \iloeps R] \;\equiv\; [e \iloeps H] + [e \iloeps g]
  \pmod 2 .
\end{equation}
Indeed, if $e$ lies to one $\iloeps$-side of the block $[a, c)$, then
$L, R, H, g$ are all on the other side of $e$, and both sides of
\eqref{eq:parity-identity} count $0$ or $2$. Otherwise $e$ is a ghost
inside $L$ or $R$; then $[e \iloeps H] = [e \iloeps(\text{its container})]$,
both settled by $\varepsilon_e$, and
$[e \iloeps g] = [e \iloeps(\text{the other interval})]$, both settled by
position---so \eqref{eq:parity-identity} holds term by term.

Hence every actor $x \neq p_L, p_R$ contributes an even change to $N$, and
$(p_L, p_R)$ contributes none: $N(\sigma') \equiv N(\sigma) \pmod 2$.
\end{proof}

\begin{proof}[Proof of \Cref{prop:attribution-positive}]
Attribution builds $\pi$ as a chain of assignments from the identity
assignment up to $(P_\FinalState, \pi)$ (\Cref{prop:attribution-chain}).
At the identity assignment the entities are the unit intervals, $\iloeps$
restricts to the linear label order, and $I_j \mapsto I_j$ has no
inversions, so $N = 0$. Each coalescence preserves the parity of $N$
(\Cref{lem:coalescence-inversion-parity}), so $N(\pi)$ is even and
\[
\gasgnof{\pi} = (-1)^{N(\pi)} = +1. \qedhere
\]
\end{proof}

$\varepsilon$-candidacy and positivity together fix attribution's role in the proof:
the restricted Leibniz expansion counts each performance exactly once and
with sign $+1$, so every object we want is present, on the positive side.
What they do \emph{not} say is that the expansion counts nothing else: its
$\varepsilon$-candidate sum also carries castings that attribution never produces.
Sorting those out is the work that remains---rehearsal
(\Cref{sec:rehearsal}) and the sign-reversing involution it feeds
(\Cref{sec:involution}).

\section{Rehearsal}\label{sec:rehearsal}

\emph{Rehearsal} reverses attribution: from a casting it tries to recover a
performance that attribution would turn into
that casting. The casting's bijection~$\pi$ is fixed in advance, and
rehearsal rebuilds the performance one coalescence at a time
(\Cref{sec:rehearsal-algorithm}). The run \emph{succeeds} when a
performance is recovered and \emph{fails} at a \emph{spurious}
crossing---a coalescence the reconstruction cannot carry out. This
success/failure split is what the sign-reversing involution
(\Cref{sec:involution}) exploits: the failed castings cancel in signed
pairs, the successful ones survive.

\subsection{The rehearsal algorithm}
\label{sec:rehearsal-algorithm}

\subsubsection{Aspiration and reality}

Rehearsal is read most easily as a climb in the assignment poset
(\Cref{def:assignment-order}). The input is an $\varepsilon$-candidate
casting; its bijection~$\pi$, viewed as the assignment
$(P_\FinalState, \pi)$ (\Cref{def:assignment}), sits at the top of the climb. We call it the
\emph{aspiration}: it records, for each actor~$I$, the entity~$\pi(I)$
that $I$ should reach. Starting from the identity assignment, rehearsal builds a second assignment upward toward it, one binary
coalescence at a time:
\begin{itemize}
\item $\sigma$: the \emph{reality}, a running diagonal assignment
  (\Cref{def:diagonal}) initialized as the identity assignment and updated
  at each accepted coalescence; its partition $P_\sigma$ coarsens by one
  binary merge per step, and its bijection records the performer of each
  ghost fired so far;
\item $\perf$: the performance under construction (coalescence events and
  ghost paths recorded so far).
\end{itemize}
This is the climb of \Cref{prop:attribution-chain} with the two sides
exchanged. Attribution built $\sigma$ freely and let $\pi$ emerge as its
final value; rehearsal holds $\pi$ fixed and asks whether $\sigma$ can
reach it. Throughout the run the reality stays below the aspiration,
$\sigma \leq \pi$ (\Cref{lem:sigma-le-pi}). So $\sigma$ never
overshoots $\pi$: the run either reaches $\pi$ (success) or halts earlier
at a spurious crossing (failure). A third conceivable outcome---the run
completing without failure, no crossing left to process, yet with
$\sigma < \pi$---is ruled out under planarity (\Cref{sec:no-match}).

\medskip

The \emph{representative} (or \emph{survivor}) of an active interval~$J$
is the actor $\sigma^{-1}(J)$ flowing as it---the one actor of~$J$ not yet
consumed by a coalescence.
Under the casting it \emph{aspires} to the entity $\pi(\sigma^{-1}(J))$. The
\emph{$\sigma$-active actors} are the representatives of the active
intervals of~$P_\sigma$.

\medskip

A crossing of two adjacent active intervals $I^- \ilo I^+$ at junction~$g$
forces their merge; the only freedom is how $\sigma$ updates its
bijection, and the requirement $\sigma \leq \pi$ pins it. The two gluings of the
two-particle case are both available---$\lambda_-$ drawing the ghost
from $I^-$, $\lambda_+$ from $I^+$
(\Cref{fig:simplified-coalescence,fig:simplified-coalescence-negative})---but
$\sigma \leq \pi$ admits only the
one whose ghost performer is $\pi^{-1}(g)$. Since $\pi^{-1}(g)$ is a single
actor, at most one of the two representatives can be it: there is one
valid update when $\pi^{-1}(g) \in \{\sigma^{-1}(I^-), \sigma^{-1}(I^+)\}$, and none at
all otherwise---the third outcome, absent from attribution. Several
intervals meeting at one vertex are processed as successive binary
crossings, in label order, matching attribution's nested
pairing (\Cref{sec:proof-high-indegree}); the binary description below
loses no generality.

\fnote{%
  prose={Valid versus spurious crossings; a spurious crossing is one of the
    rehearsal failure witnesses.},
  lean={Coalescence.RehearseSpuriousFail},
  py={combinatorics.rehearsal}}%
\begin{definition}[Valid and spurious crossings]
\label{def:valid-spurious}
A crossing between adjacent active intervals $I^-$ and $I^+$
at junction~$g$ is \emph{valid} if one of them is destined for the
ghost role at~$g$---that is, if $\pi(\sigma^{-1}(I^-)) = g$ or
$\pi(\sigma^{-1}(I^+)) = g$, so that one of the two representatives
aspires to become ghost~$g$. Otherwise the crossing is
\emph{spurious}.
\end{definition}

A valid crossing becomes a coalescence: one path exits as a
ghost path, the other represents the merged interval, and the
system shrinks. A spurious crossing causes rehearsal to
halt: rehearsal terminates with \emph{failure}, reporting the
failure pair~$(I, J)$ at the crossing vertex~$v$. Rehearsal
terminates \emph{successfully} when no crossings remain.

\subsubsection{The procedure}

Rehearsal reads the casting one vertex at a time, in the linear
order~$\le$ (\Cref{def:spacetime-graph}): it passes through the vertices
visited by at most one $\sigma$-active representative and acts only at
crossings, firing a coalescence at each valid one and halting at the first
spurious one.

\ %

\noindent\textbf{Input:} An $\varepsilon$-candidate casting $(\pi, \paths)$, where
$\pi\colon \Actors \to \Roles$ is an $\varepsilon$-candidate bijection (fixed
throughout).

\medskip\noindent\textbf{Rehearsal.}
\begin{enumerate}[label=\textup{(S\arabic*)}, ref=\textup{(S\arabic*)},
                  leftmargin=*]
\item\label{itm:s-init} Initialize: reality $\sigma \gets$ the identity
  assignment (the bottom of the poset), and $\perf \gets$ the empty
  performance.
\item\label{itm:s-loop} Visit the vertices~$v$ of the DAG in the linear
  order~$\le$ (\Cref{def:spacetime-graph}). At most vertices fewer than two
  $\sigma$-active representatives meet, and nothing happens---the sweep
  passes through. Otherwise, while two or more $\sigma$-active
  representatives meet at~$v$:
  \begin{enumerate}
  \item Take the crossing pair: the active intervals of $P_\sigma$ whose
    representatives reach~$v$ form a contiguous block, by consecutivity
    (\Cref{sec:proof-adjacent}); let $I^- \ilo I^+$ be its leftmost
    adjacent pair, with junction~$g$ and representative paths
    $P = P_{\sigma^{-1}(I^-)}$ and $Q = P_{\sigma^{-1}(I^+)}$ read from the
    casting. Let $I = \sigma^{-1}(I^-)$ and $J = \sigma^{-1}(I^+)$ be the
    two representatives.
  \item \textbf{Test:} Is $g \in \{\pi(I), \pi(J)\}$? (Does one of
    the two representatives aspire to ghost~$g$?)
  \begin{itemize}
	  \item If \textbf{no}, the crossing is \emph{spurious}: rehearsal terminates (\emph{failure}), reporting the
	    failure pair $(I, J)$ at vertex~$v$.
	  \item If \textbf{yes} (\emph{valid} crossing)---record a coalescence.
	    One of $I, J$ aspires to~$g$ ($\pi(\cdot) = g$)---the \emph{ghost
	    performer}---and the other is the \emph{survivor}.
	    \begin{itemize}
		    \item \emph{Ghost path:} the suffix from~$v$ of the ghost
		      performer's casting path becomes the ghost path~$\Gamma_g$.
		    \item \emph{Genealogy:} Record $v$ as an internal vertex
		      (merger point) of the genealogy tree; the segments of
		      $P$ and $Q$ from their most recent coalescence vertex
		      (or starting point) to~$v$ become tree edges.
		    \item Form the merged interval $H \gets I^- \cup I^+$
		      and extend $\sigma$ by this binary coalescence: the
		      partition $P_\sigma$ replaces $\{I^-, I^+\}$ by
		      $\{H\}$ and adds~$g$ to its ghost set, while the
		      bijection sends the survivor to the merged
		      interval~$H$ as its heir actor and the ghost performer
		      to the new ghost~$g$. Re-examine~$v$, then continue the
		      sweep.
	    \end{itemize}
    \end{itemize}
  \end{enumerate}
\item\label{itm:s-success} \textbf{Success}: the sweep \ref{itm:s-loop}
  runs to the end of the DAG without a spurious crossing. The remaining
  representative paths (suffixes from the last coalescence to the heir
  endpoints) become the final edges of the genealogy trees.
  Reality has climbed all the way to aspiration: $\sigma$ has
  reached $(P_\FinalState, \pi)$ (verified below in
  \Cref{prop:rehearsal-boundary}).
  Return $(\pi, \paths)$ (a fixed point) together with the
  performance~$\perf$. Rehearsal terminates (\emph{success}).
\end{enumerate}

\subsubsection{Successful castings}

\fnote{%
  prose={A casting that rehearses to the success outcome (one of the five
    rehearsal results).},
  lean={Coalescence.RehearseResult},
  py={combinatorics.rehearsal.RehearseResult}}%
\begin{definition}[Successful casting]\label{def:successful-casting}
An $\varepsilon$-candidate casting is \emph{successful} if rehearsal
processes all crossings as valid coalescences without
encountering a spurious one.
\end{definition}

\begin{remark}[A single obstruction]
A priori, rehearsing a casting could break down in several ways: a
crossing selected by rehearsal might involve non-adjacent intervals;
a crossing might be \emph{spurious} (neither representative aspires to the
ghost); or the run might terminate with reality failing to match
aspiration on some ghost or heir. The planarity assumptions collapse
this list: consecutivity keeps every selected crossing adjacent---at the
start and after each merger (\Cref{sec:proof-adjacent})---and
assignment rigidity rules out a mismatched termination
(\Cref{cor:rehearsal-reaches-aspiration}). The spurious crossing is
therefore the only genuine obstruction---and it is exactly the failure
that segment swap pairs off (\Cref{sec:involution}).
\end{remark}

\subsection{Properties of the rehearsal algorithm}
\label{sec:rehearsal-properties}

This subsection records two facts about the reality $\sigma$ that the
involution argument (\Cref{sec:involution}) uses downstream: throughout
the run $\sigma$ stays below the aspiration in the poset, and the crossing
rehearsal selects is always between adjacent active intervals.

\subsubsection{Reality stays below aspiration}

\fnote{%
  prose={Reality stays below aspiration ($\sigma \le \pi$): the rehearsal
    state never overtakes its target in the assignment poset.},
  lean={Coalescence.IntervalPartition.Coarsens}}%
\begin{lemma}[Reality stays below aspiration]\label{lem:sigma-le-pi}
While rehearsal has not crashed, the running diagonal assignment
$\sigma$ satisfies $\sigma \leq \pi$ in the assignment poset
(\Cref{def:assignment-order}): every ghost $\sigma$ has fired is a ghost
of $\pi$ too, with the same performer.
\end{lemma}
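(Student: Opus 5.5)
The proof is an induction on the number of accepted coalescences, following the rehearsal loop~\ref{itm:s-loop} step by step and using the definition of the preorder (\Cref{def:assignment-order}) directly.

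\emph{Base case.} At initialization~\ref{itm:s-init}, $\sigma$ is the identity assignment. Its ghost set is empty, so condition~(ii) is vacuous. Condition~(i) holds because every partition coarsens the initial one, in particular $\Ghosts(P_\sigma)=\emptyset\subseteq\Ghosts$. Hence $\sigma\le\pi$.

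\emph{Inductive step.} Assume $\sigma\le\pi$ before a crossing is processed. A spurious crossing halts the run without modifying $\sigma$, so only a valid crossing needs checking. Let it occur at junction~$g$ between adjacent active intervals $I^-\ilo I^+$, with representatives $I,J$. The update produces $\sigma'$, which merges $I^-,I^+$ into $H$ and adds $g$ to the ghost set; every other ghost keeps its performer.

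For condition~(ii), old ghosts keep their $\sigma$-performers, and these equal the $\pi$-performers by hypothesis. For the new ghost, validity gives $g\in\{\pi(I),\pi(J)\}$, and the rule sends precisely that representative to~$g$. Hence $\sigma'^{-1}(g)=\pi^{-1}(g)$.

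For condition~(i), it suffices to show $g\in\Ghosts$, i.e.\ that $g$ is a ghost of the final state. This holds because $\pi$ is a bijection onto $\Roles$ and one of $\pi(I),\pi(J)$ equals~$g$, so $g$ is a role, and the only junction roles are the elements of $\Ghosts$. Then $\Ghosts(P_{\sigma'})=\Ghosts(P_\sigma)\cup\{g\}\subseteq\Ghosts$. By the equivalence stated in \Cref{def:assignment-order}, this means $P_\pi=P_\FinalState$ is coarser than $P_{\sigma'}$.

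\emph{Main obstacle.} The only delicate point is the equivalence ``coarser $\iff$ ghost-set inclusion'' in condition~(i), and whether it needs planarity or adjacency. It does not: every partition arising in the proof is determined by its ghost set, since its active intervals are the maximal intervals avoiding non-ghost junctions. Inclusion of ghost sets therefore gives coarsening directly. Adjacency of $I^-,I^+$ (\Cref{sec:proof-adjacent}) is needed only for the merge to be well defined, and the sweep supplies it.

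Multiway meetings are handled as successive binary crossings at the same vertex~$v$, so the same induction covers them. After any finite number of accepted steps, $\sigma\le\pi$ holds for as long as rehearsal has not crashed.
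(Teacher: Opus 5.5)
Your proof is correct and follows the paper's own argument: induction on accepted coalescences, where the validity test forces the new ghost's performer to be $\pi^{-1}(g)$ and all earlier performers are left untouched. Your explicit check of the coarsening condition fills in a point that the paper's one-line proof leaves implicit: $g \in \Ghosts$ because $g$ lies in the image of $\pi$ and the only junction roles are ghosts.
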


\noindent
This is immediate by induction on the accepted coalescences: each
accepted step fires a ghost~$g$ with $\sigma^{-1}(g) = \pi^{-1}(g)$ (the
validity test passed) and changes nothing else, starting from the
identity assignment.

\subsubsection{Crossings are between consecutive intervals}
\label{sec:proof-adjacent}

At any vertex~$v$, the active intervals whose representatives reach~$v$
form a consecutive run: if the representatives of two active intervals
meet at~$v$, so does the representative of every active interval between
them, whose path is ordered between theirs. This is consecutivity
(\Cref{prop:consecutivity}). A crossing is therefore always between
consecutive active intervals, and a vertex where several meet carries a
contiguous block, which rehearsal processes as successive binary
crossings of adjacent pairs---in label order, matching
attribution's nested pairing (\Cref{sec:proof-high-indegree}). The
property holds at every such vertex, not only the one rehearsal
reaches first, and is inherited after each merger.

\subsection{The no-match obstruction and assignment rigidity}
\label{sec:no-match}

The properties above describe what happens \emph{while} 
rehearsal is running and at a spurious failure. There is one
further outcome we still have to rule out: a successful run
that nevertheless fails to reconstruct a performance, because
the running assignment $\sigma$ stops strictly below the
$\varepsilon$-candidate~$\pi$. This subsection resolves that obstruction
under planarity, via a purely combinatorial rigidity lemma on
assignments.

\subsubsection{The obstruction}

Fix an $\varepsilon$-candidate casting, with bijection~$\pi$, and run
rehearsal on it, producing the running assignment~$\sigma$. Even a
successful run---one that did not crash on a spurious crossing---need not
end with $\sigma$ reaching~$\pi$.
It is conceivable that rehearsal halts at $\sigma < \pi$: some ghost
$g \in \Ghosts(P_\pi)$ is not yet a ghost of $P_\sigma$,
yet no two $\sigma$-active actors' paths cross in the
casting (so rehearsal has nothing further to process).

Such an outcome would break the proof. The casting would
neither be in attribution's image ($\sigma$ never reaches
$\pi$) nor be spuriously failed (no rejection occurred), so it
would have no partner under the involution~$\iota$
(\Cref{thm:involution}) and would survive in the signed sum.

The stall is ruled out in two steps. First,
\Cref{sec:rigidity-under-planarity} isolates the combinatorial
core, \Cref{lem:assignment-rigidity}: a rigidity statement on the
aspiration~$\pi$ and the running assignment~$\sigma$ that makes no
reference to paths or crossings. Then, in
\Cref{sec:rigidity-hypotheses}, planarity~\ref{itm:P1-crossing}
supplies the lemma's hypotheses at a successful termination of
rehearsal; the conclusion is recorded as
\Cref{cor:rehearsal-reaches-aspiration}, the form in which the rest
of the proof uses this subsection.

\subsubsection{The rigidity lemma}
\label{sec:rigidity-under-planarity}

\fnote{%
  prose={Assignment rigidity and heir-ordering; for $V_D$ both are
    discharged from P1.},
  lean={Coalescence.Valuation.assignment_rigidity_property, Coalescence.Valuation.heir_ordering_property},
  py={valuations.predicates}}%
\begin{lemma}[Assignment rigidity]\label{lem:assignment-rigidity}
Let $\pi$ be an assignment and let $\sigma \leq \pi$ be a
diagonal assignment. Suppose:
\begin{enumerate}[label=(\roman*),ref=\roman*]
\item \label{itm:rigidity-heir-pres}
  $\pi$ is \emph{heir-order-preserving}: for active intervals
  $I \ilo J$ of $P_\pi$,
  $\pi^{-1}(I) \ilo \pi^{-1}(J)$;
\item \label{itm:rigidity-G}
  for every ghost
  $g \in \Ghosts(P_\pi) \setminus \Ghosts(P_\sigma)$, writing
  $H = \heir_\pi(g)$ for the unique active interval of $P_\pi$ whose
  interior contains~$g$, exactly one of the following holds:
  \begin{enumerate}[label=\textup{(G\arabic*)},
                     ref=\textup{(G\arabic*)},
                     leftmargin=4em]
  \item \label{itm:ghost-left}
    $\pi^{-1}(H) \ilo \pi^{-1}(g) \ilo g$, or
  \item \label{itm:ghost-right}
    $g \ilo \pi^{-1}(g) \ilo \pi^{-1}(H)$.
  \end{enumerate}
\end{enumerate}
Then $\sigma = \pi$.
\end{lemma}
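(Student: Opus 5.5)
The plan is to show that $\sigma$ has already fired every ghost of $\pi$, i.e.\ $\Ghosts(P_\sigma)=\Ghosts(P_\pi)$. Once the partitions coincide, condition~(ii) of \Cref{def:assignment-order} makes the two bijections agree on every ghost. Each active interval $H$ then has a unique actor not used as a ghost performer, and that actor must be sent to $H$ by both $\sigma$ and $\pi$, so $\sigma=\pi$. We argue by contradiction and assume the set of unfired ghosts $\Ghosts(P_\pi)\setminus\Ghosts(P_\sigma)$ is non-empty.

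The first step is a local reduction to one heir. Since $P_\pi$ is coarser than $P_\sigma$, each active interval $H$ of $P_\pi$ is a union of consecutive active intervals $K_1\ilo\cdots\ilo K_r$ of $P_\sigma$. These are separated by junctions $g_1<\dots<g_{r-1}$, with $g_i$ lying between $K_i$ and $K_{i+1}$. The $g_i$ are ghosts of $\pi$ but not of $\sigma$, and the unfired ghosts are exactly the $g_i$ arising in this way from the various $H$. Fix an $H$ with $r\ge 2$.

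By diagonality of $\sigma$ (\Cref{def:diagonal}), the actors labelled in $K_j$ are the representative $a_j=\sigma^{-1}(K_j)$ together with the performers of the fired ghosts interior to $K_j$. By $\sigma\le\pi$, those fired ghosts have the same performers under $\pi$. Counting the actors labelled in $H$ against $H$ and its interior ghosts under $\pi$ then forces $\pi$ to restrict to a bijection $\{a_1,\dots,a_r\}\to\{H,g_1,\dots,g_{r-1}\}$. Here $a_1\ilo\cdots\ilo a_r$, and $a_j\ilo g_i$ holds iff $j\le i$.

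The key step, and the only real content, is a pigeonhole argument on this small bijection using~(G). Let $a_h=\pi^{-1}(H)$, and let $a_j=\pi^{-1}(g_i)$ be the performer of $g_i$.
\begin{itemize}
\item Case \ref{itm:ghost-left} reads $a_h\ilo a_j\ilo g_i$, i.e.\ $h<j\le i$.
\item Case \ref{itm:ghost-right} reads $g_i\ilo a_j\ilo a_h$, i.e.\ $i<j<h$.
\end{itemize}
Now look at the ghosts adjacent to $K_h$.
\begin{itemize}
\item If $h\ge2$, the ghost $g_{h-1}$ has no admissible performer. Case~(G1) would need $h<j\le h-1$, and case~(G2) would need $h-1<j<h$; both ranges are empty.
\item If $h\le r-1$, the ghost $g_h$ has no admissible performer. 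Case~(G1) would need $h<j\le h$, and case~(G2) would need $h<j<h$; again both are empty.
\end{itemize}
Since $r\ge2$, at least one of these alternatives occurs, contradicting the hypothesis that one of (G1), (G2) holds for every unfired ghost. Hence $r=1$ for every $H$, and no unfired ghost exists.

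The main care needed is in the counting of the reduction step. We must check that every junction interior to some $K_j$ is already a ghost of $\sigma$, which follows from the partition structure since every interior junction of an active interval is a ghost. We must also check that a fired ghost inside $K_j$ cannot be performed by an actor outside $K_j$; this holds because $\pi$ fixes the performers of fired ghosts, and $\sigma$ is diagonal. I expect hypothesis~\eqref{itm:rigidity-heir-pres} to play no essential role in this argument: the contradiction already arises inside a single heir. I would keep it in reserve only in case the bookkeeping across different heirs requires it, for instance to confirm that $\pi^{-1}(H)$ is labelled inside $H$. Even that should follow directly from the counting above.
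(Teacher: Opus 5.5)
There is a genuine gap, and it sits exactly where you set hypothesis~(i) aside. Your claim that counting forces $\pi$ to restrict to a bijection $\{a_1,\dots,a_r\}\to\{H,g_1,\dots,g_{r-1}\}$ does not follow. The map $\pi$ is an arbitrary bijection, not a diagonal one. Counting inside $H$ only shows that the pinned ghosts of $H$ consume every label of $H$ other than the $a_j$. Nothing prevents an actor labelled outside $H$ from performing $H$ or some $g_i$, while some $a_j$ performs an entity of another heir. The same unjustified localization is hidden in your closing step (``that actor must be sent to $H$ by both $\sigma$ and $\pi$'').

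In fact the lemma is false without~(i). For $n=3$, let $\sigma$ be the identity assignment, and let $P_\pi$ have heirs $[1,3)$, $[3,4)$ and ghost~$2$, with $\pi\colon I_1\mapsto[3,4)$, $I_2\mapsto 2$, $I_3\mapsto[1,3)$. Then $\sigma\le\pi$ and $\sigma$ is diagonal. At the unmet ghost~$2$ exactly (G2) holds, since $2\ilo I_2\ilo I_3$. Yet $\sigma\ne\pi$ and $\pi(I_1)\notin\{[1,3),2\}$. With no ghosts at all, swapping $I_1$ and $I_2$ already defeats your last step. So an argument that never invokes~(i) cannot be complete.

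What is missing is the paper's Step~1: a global prefix count at a boundary of $P_\pi$, using (i) and (G) together. At a boundary $a$ of $P_\pi$ there are exactly $a-1$ entities below $a$ (heirs ending by $a$, and ghosts $<a$).

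Suppose the heir starting at $a$ had performer $<a$. Then:
\begin{itemize}
\item by~(i), every earlier heir would also have performer $<a$;
\item by betweenness, so would the unmet ghosts of those heirs;
\item pinned ghosts below $a$ have performers $<a$ by diagonality of $\sigma$.
\end{itemize}
This yields $a$ distinct performers among $a-1$ labels, a contradiction.

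Hence every heir from $a$ on has performer $\ge a$. By (G) so does every unmet ghost interior to such an heir, and so does every pinned ghost above $a$. Bijectivity then matches the labels below $a$ with the entities below $a$.

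The paper runs this only at the left endpoint of the heir of the minimal unmet ghost, but the count works at every boundary. Applying it at both endpoints of $H=[a,b)$ gives precisely the localization you asserted. With that supplied, your pigeonhole at the junction $g_{h-1}$ or $g_h$ adjacent to $K_h$ is correct. It is a tidier finish than the paper's window-and-free-label argument of Steps~2--3, and needs no minimality. Your identification of heir actors then goes through as well.
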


Two terms, defined relative to the pair $\sigma \leq \pi$, run
through the proof and through \Cref{fig:assignment-rigidity}. A
ghost of~$P_\pi$ is \emph{pinned} if it is also a ghost
of~$P_\sigma$: it then has the same performer under both
assignments, $\sigma^{-1}(g) = \pi^{-1}(g)$
(\Cref{def:assignment-order}), so $\pi$ cannot reassign it. The
remaining ghosts of~$P_\pi$, those in
$\Ghosts(P_\pi) \setminus \Ghosts(P_\sigma)$, are \emph{unmet}.
Hypothesis~(\ref{itm:rigidity-G}) is a \emph{betweenness} condition
on the unmet ghosts: \ref{itm:ghost-left} and \ref{itm:ghost-right}
are the two mutually exclusive ways of saying that the
performer~$\pi^{-1}(g)$ of an unmet ghost lies strictly between the
ghost~$g$ itself and the performer~$\pi^{-1}(H)$ of its heir. The
proof uses the hypothesis mostly through this reading.

\begin{figure}[tp]
\centering
\begin{tikzpicture}[
    x=1.0cm, y=1cm,
    heirbox/.style   = {rounded corners=2pt, thick, draw=colP, fill=colP!12},
    sigmabox/.style  = {rounded corners=2pt, thick, draw=colB, fill=colB!14},
    prefixbox/.style = {rounded corners=2pt, thick, draw=black!35,
                        fill=black!6},
    cellbox/.style   = {rounded corners=1pt, draw=black!30},
    cellpinned/.style= {rounded corners=1pt, draw=black!30, fill=black!12},
    cellfree/.style  = {rounded corners=1pt, draw=colBC, line width=1pt,
                        fill=colBC!18},
    repP/.style   = {circle, fill=colP, inner sep=1.5pt},
    repB/.style   = {circle, fill=colB, inner sep=1.5pt},
    repA/.style   = {circle, fill=colA, inner sep=1.5pt},
    repgray/.style= {circle, fill=black!45, inner sep=1.4pt},
    repfree/.style= {circle, fill=colBC, draw=colBC, line width=0.8pt,
                     inner sep=1.8pt},
    gperfring/.style = {circle, draw=colC, fill=white,
                        line width=1.3pt, inner sep=1.6pt},
    ghostmark/.style   = {circle, draw=colGhost, fill=white,
                          line width=1pt, inner sep=1.7pt},
    ghostfree/.style   = {circle, draw=colC, fill=white,
                          line width=1.4pt, inner sep=1.9pt},
    gsmall/.style     = {circle, draw=colGhost, fill=white,
                         line width=0.7pt, inner sep=1.1pt},
    gfreesmall/.style = {circle, draw=colC, fill=white,
                         line width=1.2pt, inner sep=1.4pt},
    prefixghost/.style = {circle, draw=black!45, fill=white,
                          line width=0.8pt, inner sep=1.5pt},
    bnd/.style       = {colB, line width=1.2pt},   %
    sgarr/.style     = {colBC, -{Latex[length=4pt]}, thick},
    pinarr/.style    = {colGhost, -{Latex[length=3.5pt]}, semithick,
                        densely dotted},
    piarr/.style     = {-{Latex[length=3.2pt]}, semithick},
    guide/.style     = {black!25, densely dotted},
    clash/.style     = {font=\scriptsize\itshape},
    special/.style   = {font=\footnotesize},
    rowlab/.style    = {font=\footnotesize\itshape, anchor=east},
    caselab/.style   = {font=\footnotesize, anchor=west},
    inlab/.style     = {font=\footnotesize},
    sublab/.style    = {font=\scriptsize},
]

\def\gap{0.09}   %
\def\eps{0.22}   %
\def\bh{0.30}    %
\def\ch{0.22}    %
\def\yA{3.15}    %
\def\yB{1.75}    %
\def\yC{0.55}    %
\def\yax{-0.35}  %
\def\gdrop{0.50} %
\def\yGl{-2.55}  %
\def\yGr{-5.30}  %
\pgfmathsetmacro{\gA}{\yA-\gdrop}   %
\pgfmathsetmacro{\gB}{\yB-\gdrop}   %

\def\Ibox#1#2#3#4{\draw[#4] ({#1+\gap},{#3-\bh}) rectangle ({#2-\gap},{#3+\bh});}
\def\Acell#1#2#3{\draw[#3] ({#1+\gap},{#2-\ch}) rectangle ({#1+1-\gap},{#2+\ch});}
\def\Rep#1#2#3{\node[#3] at ({#1+\eps},#2) {};}
\def\GPerf#1#2{\node[gperfring] at (#1,#2) {};
  \fill[colC] (#1,#2) circle[radius=0.85pt];}

\def\xa{5} \def\xc{6} \def\xg{8} \def\xb{11}
\def\xmin{0.7} \def\xmax{11.9}

\fill[black!5] (\xmin,\yax-0.05) rectangle (\xa,\yA+\bh+0.30);
\fill[black!5] (\xmin,{\yGl-\ch-0.10}) rectangle (\xa,{\yGl+\ch+0.10});
\fill[black!5] (\xmin,{\yGr-\ch-0.10}) rectangle (\xa,{\yGr+\ch+0.10});
\draw[black!30, densely dashed] (\xa,\yax) -- (\xa,\yA+\bh+0.30);

\draw[guide] (\xa,\yax-0.62) -- (\xa,\yGr-0.65);
\draw[guide] (\xg,\yax-0.62) -- (\xg,\yGr-0.65);

\node[rowlab] at (\xmin-0.05,\yA) {$P_\pi$};
\node[rowlab] at (\xmin-0.05,\yB) {$P_\sigma$};
\node[rowlab] at (\xmin-0.05,\yC) {actors};

\draw[->, thick] (\xmin,\yax) -- (\xmax,\yax);
\foreach \x in {1,...,11}{ \draw[black!70] (\x,\yax-0.06) -- (\x,\yax+0.06); }
\foreach \x/\lab in {\xa/a, \xc/c, \xg/g, \xb/b}{
  \draw[thick] (\x,\yax-0.10) -- (\x,\yax+0.10);
  \node[special, below] at (\x,\yax-0.10) {$\lab$};
}
\node[special, below] at (1,\yax-0.10) {$1$};
\node[special, below] at (4,\yax-0.10) {$a{-}1$};

\foreach \y in {\yA,\yB}{
  \Ibox{1}{3}{\y}{prefixbox} \Rep{1}{\y}{repgray}
  \Ibox{3}{5}{\y}{prefixbox} \Rep{3}{\y}{repgray}
}
\node[prefixghost] at (2,\gA) {};  \node[prefixghost] at (2,\gB) {};
\node[prefixghost] at (4,\gA) {};  \node[prefixghost] at (4,\gB) {};

\Ibox{\xa}{\xb}{\yA}{heirbox} \Rep{\xa}{\yA}{repP}
\node[inlab] at ({\xa+0.75},\yA) {$H$};
\foreach \x in {6,7,9,10}{ \node[ghostmark] at (\x,\gA) {}; }
\node[ghostfree] (gA) at (\xg,\gA) {};              %
\node[special, anchor=west] at ({\xg+0.16},\gA) {$g$};

\Ibox{\xa}{\xg}{\yB}{sigmabox} \Rep{\xa}{\yB}{repB}
\node[inlab] at ({\xa+0.7},\yB) {$J$};
\Ibox{\xg}{\xb}{\yB}{sigmabox} \Rep{\xg}{\yB}{repB}
\draw[bnd] (\xg,\yB-\bh-0.06) -- (\xg,\yB+\bh+0.06);     %
\node[ghostmark] at (6,\gB) {};
\node[ghostmark] (pinB) at (7,\gB) {};
\node[ghostmark] at (9,\gB) {};  \node[ghostmark] at (10,\gB) {};

\foreach \x in {1,2,3,4,5,7,8,9,10}{ \Acell{\x}{\yC}{cellbox} }
\Acell{6}{\yC}{cellfree}
\foreach \x in {1,2,3,4}{ \Rep{\x}{\yC}{repgray} }
\foreach \x in {5,7,8,9,10}{ \Rep{\x}{\yC}{repA} }
\node[repfree] (cC) at ({\xc+\eps},\yC) {};      %
\node[special, anchor=west] at ({\xc+\eps+0.14},\yC) {$c$};

\draw[sgarr] (cC) to[out=90,in=-90] ({\xa+\eps},\yB);
\draw[pinarr] ({7+\eps},\yC+0.10) to[out=90,in=-90] (pinB);
\node[font=\scriptsize, anchor=west] at (7.35,\gB) {pinned};

\draw[decorate, decoration={brace, amplitude=4pt}, black]
    (\xg-\gap,\yA+\bh+0.16) -- (\xa+\gap,\yA+\bh+0.16)
    node[midway, above, font=\scriptsize, text=black]
    {$\pi$ determined on $[a,g)$ except at~$c$};

\node[caselab] at (\xmin,\yGl+1.28)
  {Case \ref{itm:ghost-left} at~$g$:\quad
   $\pi^{-1}(H) \ilo \pi^{-1}(g) \ilo g$
   \ ---\ two claimants for the free label};
\draw[heirbox] ({\xa+\gap},{\yGl+0.70}) rectangle ({\xb-\gap},{\yGl+0.96});
\node[repP] at ({\xa+\eps},{\yGl+0.83}) {};
\node[sublab] at ({\xa+0.75},{\yGl+0.83}) {$H$};
\foreach \x in {6,7,9,10}{ \node[gsmall] at (\x,{\yGl+0.54}) {}; }
\node[gfreesmall] at (\xg,{\yGl+0.54}) {};
\node[sublab, anchor=west] at ({\xg+0.14},{\yGl+0.54}) {$g$};
\foreach \x in {1,2,3,4,8,9,10}{ \Acell{\x}{\yGl}{cellbox} }
\Acell{5}{\yGl}{cellpinned} \Acell{7}{\yGl}{cellpinned}
\Acell{6}{\yGl}{cellfree}
\node[repP] at (6.32,\yGl) {};
\GPerf{6.68}{\yGl}
\draw[piarr, colP] (6.32,\yGl+0.08) to[out=115,in=-55] (5.60,\yGl+0.68);
\draw[piarr, colC] (6.68,\yGl+0.08) to[out=40,in=195] (7.90,\yGl+0.43);
\node[clash] at (6.5,\yGl-0.50)
  {$\pi^{-1}(H) = c = \pi^{-1}(g)$};

\node[caselab] at (\xmin,\yGr+1.28)
  {Case \ref{itm:ghost-right} at~$g$:\quad
   $g \ilo \pi^{-1}(g) \ilo \pi^{-1}(H)$
   \ ---\ no claimant for the free label};
\draw[heirbox] ({\xa+\gap},{\yGr+0.70}) rectangle ({\xb-\gap},{\yGr+0.96});
\node[repP] at ({\xa+\eps},{\yGr+0.83}) {};
\node[sublab] at ({\xa+0.75},{\yGr+0.83}) {$H$};
\foreach \x in {6,7,9,10}{ \node[gsmall] at (\x,{\yGr+0.54}) {}; }
\node[gfreesmall] at (\xg,{\yGr+0.54}) {};
\node[sublab, anchor=west] at ({\xg+0.14},{\yGr+0.54}) {$g$};
\foreach \x in {1,2,3,4,8,9,10}{ \Acell{\x}{\yGr}{cellbox} }
\Acell{5}{\yGr}{cellpinned} \Acell{7}{\yGr}{cellpinned}
\Acell{6}{\yGr}{cellfree}
\node[clash] at (6.5,\yGr) {?};
\GPerf{8.35}{\yGr}
\node[repP] at (9.5,\yGr) {};
\draw[piarr, colC] (8.35,\yGr+0.08) to[out=110,in=-45] (8.04,\yGr+0.42);
\draw[piarr, colP] (9.5,\yGr+0.08) -- (9.5,\yGr+0.71);
\node[sublab] at (8.35,\yGr-0.50) {$\pi^{-1}(g)$};
\node[sublab] at (9.6,\yGr-0.50) {$\pi^{-1}(H)$};
\node[clash] at (6.5,\yGr-0.50) {no one takes~$c$};

\end{tikzpicture}

\caption{\textbf{Assignment rigidity: the one-free-label
pigeonhole.} \emph{Top three rows:} the entities of~$P_\pi$,
of~$P_\sigma$, and the actors over a shared axis of labels, drawn
as in \Cref{fig:simplified-coalescence}: an interval $[x,y)$ is a
box with a filled circle at its minimum, ghosts are open circles
below their row, and arrows depict the assignment~$\sigma$, from an
actor to the entity it performs. Left of~$a$ the two partitions
agree and all ghosts are pinned (grayed out). The minimal unmet
ghost~$g$ is interior to the $\pi$-heir $H=[a,b)$ but is a boundary
of~$P_\sigma$, so $J=[a,g)$ is an heir of~$P_\sigma$, and $\pi$ is
determined on the window $[a,g)$ except at the free actor
$c=\sigma^{-1}(J)$ (brace; Steps 1--2 of the proof). \emph{Bottom
two rows:} Step~3, each case played against its own copy of the
actors' row and a slim copy of the entities of~$P_\pi$. The arrows
keep their meaning, now for~$\pi$: a filled dot performs~$H$ and the
circled dot performs the unmet ghost~$g$, each placed in the cell of
its performer; the gray cells are taken by the performers of the
pinned ghosts of the window. In case~\ref{itm:ghost-left} both
performers lie in the window, whose only free cell is~$c$: the dots
collide, $\pi^{-1}(H) = c = \pi^{-1}(g)$, impossible for distinct
labels. In case~\ref{itm:ghost-right} every performer avoids the
free cell (prefix entities perform left of~$a$, everything else at
or beyond~$g$): $c$ goes unclaimed, contradicting bijectivity.}
\label{fig:assignment-rigidity}
\end{figure}

\begin{proof}[Proof of \Cref{lem:assignment-rigidity}]
It suffices to prove that every ghost of~$P_\pi$ is pinned, that is,
$\Ghosts(P_\pi) = \Ghosts(P_\sigma)$. Indeed, a partition is
determined by its ghost junctions, so equal ghost sets force
$P_\pi = P_\sigma$; the two assignments then have the same heirs,
agree at every ghost (each being pinned), and hence use the same
heir actors---the labels left over for the heirs. On heirs, both
assignments are order-preserving: $\sigma$ by diagonality
(\Cref{def:diagonal}; the heir actor of an active interval lies
inside it, and the intervals are disjoint), $\pi$ by
hypothesis~(\ref{itm:rigidity-heir-pres}). An order-preserving
bijection from the heirs onto the heir actors is unique---it must
send the $i$-th heir in label order to the $i$-th heir actor---so
the assignments agree on heirs as well, and $\sigma = \pi$.

\medskip

Suppose then, for contradiction, that unmet ghosts exist. Let~$g$
be the minimum unmet ghost in label order, and let
\[
H := \heir_\pi(g) = [a, b)
\]
be its heir, the unique active interval of~$P_\pi$ whose interior
contains~$g$; thus $a < g < b$. From now on we identify each actor
with its integer label and compare labels and junctions by the
usual order on integers, reserving~$\ilo$ for quoting the
hypotheses. Two structural remarks will be used repeatedly. First,
since $\Ghosts(P_\sigma) \subseteq \Ghosts(P_\pi)$, every junction
that is a boundary of~$P_\pi$---a non-ghost---is a boundary
of~$P_\sigma$ as well; and since a junction interior to an active
interval is always a ghost, no active interval of either partition
straddles such a boundary. This applies in particular to~$a$.
Second, by minimality of~$g$, every ghost of~$P_\pi$ with
label~$< g$ is pinned.

The contradiction will be a pigeonhole on a single free label,
traced in \Cref{fig:assignment-rigidity}. Step~1 shows that $\pi$
matches the labels below~$a$ with the entities lying below~$a$; in
particular no heir has its performer in $[a, \pi^{-1}(H))$. Step~2
shows that inside the window $[a, g)$ the pinned ghosts determine
the value of~$\pi$ at every label except one, the free
actor~$c$. Step~3 plays the two orders of
betweenness at~$g$ against this single free label: in
case~\ref{itm:ghost-left} the performers of~$H$ and of~$g$ are both
trapped in the window---two claimants for one free label---while in
case~\ref{itm:ghost-right} no entity at all may claim~$c$, though
the bijection~$\pi$ must use that label. Either way we reach a
contradiction.

\medskip\noindent\textbf{Step 1: $\pi$ matches the prefix.}
Call an entity of~$P_\pi$ a \emph{prefix} entity if it lies
entirely below~$a$: an heir with upper endpoint~$\leq a$, or a
ghost with label~$< a$. Since no heir of~$P_\pi$ straddles~$a$, the
prefix entities tile $[1, a)$---each prefix heir contributes itself
and its interior ghosts, one entity per label---so there are
exactly $a - 1$ of them, as many as there are labels below~$a$. We
claim that every \emph{non-prefix} entity has performer~$\geq a$.
Since $\pi$ is injective, the claim forces $\pi$ to match the
$a - 1$ labels below~$a$ with the $a - 1$ prefix entities; in
particular, every prefix heir then has performer~$< a$. This
matching is the grayed-out region left of the cut at~$a$ in
\Cref{fig:assignment-rigidity}. We verify the claim one kind of
entity at a time.

\smallskip\noindent
\emph{Pinned ghosts.} The performer of a pinned ghost is its
diagonal $\sigma$-performer, a label inside the active interval
of~$P_\sigma$ whose interior contains the ghost
(\Cref{def:diagonal}). That interval does not straddle~$a$. Hence a
pinned ghost with label~$> a$ has performer~$\geq a$; and---needed
in a moment---every prefix ghost, being pinned (its label is
$< a < g$), has performer~$< a$.

\smallskip\noindent
\emph{Heirs.} First, $\pi^{-1}(H) \geq a$. Otherwise
heir-order-preservation~(\ref{itm:rigidity-heir-pres}) would give
every prefix heir, each of which precedes~$H$ in interval order, a
performer $< \pi^{-1}(H) < a$ as well; the $a - 1$ prefix entities
together with~$H$ would then have $a$ distinct performers among the
$a - 1$ labels below~$a$. Next, every non-prefix heir~$K$ either
equals~$H$ or lies entirely to the right of~$H$ (its lower endpoint
is $\geq a$, since it cannot straddle~$a$), so
heir-order-preservation spreads the bound:
$\pi^{-1}(K) \geq \pi^{-1}(H) \geq a$.

\smallskip\noindent
\emph{Unmet ghosts.} An unmet ghost~$g''$ has label $g'' \geq g > a$
by minimality of~$g$, and its heir $H'' = \heir_\pi(g'')$
contains~$g''$ in its interior, so $H''$ is not a prefix heir and
$\pi^{-1}(H'') \geq a$ by the previous paragraph. Betweenness
places $\pi^{-1}(g'')$ strictly between $g''$ and $\pi^{-1}(H'')$,
neither of which lies left of~$a$; hence $\pi^{-1}(g'') \geq a$.

Heirs, pinned ghosts with label~$> a$, and unmet ghosts exhaust the
non-prefix entities, so the claim and the matching hold. We record
the resulting \emph{two-range} bound on heir-performers, the only
form in which Step~3 will use it:
\begin{equation}\label{eq:rigidity-two-range}
\begin{aligned}
  \pi^{-1}(K) &< a
    && \text{when heir $K$ has upper endpoint } \leq a, \\
  \pi^{-1}(K) &\geq \pi^{-1}(H) \geq a
    && \text{for every other heir}~K.
\end{aligned}
\end{equation}

\medskip\noindent\textbf{Step 2: one free label in the window
$[a, g)$.}
Every junction with label in $(a, g)$ is interior to~$H$, hence a
ghost of~$P_\pi$ (a junction interior to an active interval is a
ghost), hence pinned---so in particular a ghost of~$P_\sigma$. The
junction~$g$ itself is unmet, so it is a boundary of~$P_\sigma$;
and $a$ is a boundary of~$P_\sigma$ by the first structural remark
above. Consequently
\[
J := [a, g)
\]
is an active interval of~$P_\sigma$, a strict sub-interval of~$H$.

By $\sigma$-diagonality at~$J$ (\Cref{def:diagonal}), the $g - a$
labels of the window split disjointly as
\[
[a, g) = \{c\} \sqcup
\bigl\{\sigma^{-1}(g') : g' \in \Ghosts(P_\sigma) \cap (a, g)\bigr\},
\qquad c := \sigma^{-1}(J),
\]
the heir actor of~$J$ together with the performers of the pinned
ghosts interior to~$J$. Pinning transfers the second set to~$\pi$:
under~$\pi$ as well, the ghosts in $(a, g)$ consume every label of
the window except~$c$. In other words, the value of~$\pi$ is
already determined at every label of the window but one---the free
actor~$c$ singled out in \Cref{fig:assignment-rigidity}---and an
entity other than a pinned ghost in $(a, g)$ can have its
$\pi$-performer in the window only by taking the one free
label~$c$.

\medskip\noindent\textbf{Step 3: the free label gets two claimants
or none.}
By hypothesis~(\ref{itm:rigidity-G}), one of the two orders of
betweenness holds at~$g$ with heir~$H$; we treat them in turn.

\smallskip\noindent
\textbf{Case~\ref{itm:ghost-left}: two claimants for the free
label.}
Here $\pi^{-1}(H) \ilo \pi^{-1}(g) \ilo g$, that is,
$\pi^{-1}(g) \leq g - 1$ and $\pi^{-1}(H) \leq g - 2$. By
\eqref{eq:rigidity-two-range} we also have $\pi^{-1}(H) \geq a$, so
$\pi^{-1}(H)$ and $\pi^{-1}(g)$ are two \emph{distinct} labels in
the window $[a, g)$. But by Step~2, every label of the window other
than~$c$ performs, under~$\pi$, a pinned ghost in $(a, g)$---and
neither the heir~$H$ nor the unmet ghost~$g$ is such an entity.
Both labels would therefore have to equal the single free
label~$c$: impossible, as they are distinct.

\smallskip\noindent
\textbf{Case~\ref{itm:ghost-right}: no claimant for the free
label.}
Here $g \ilo \pi^{-1}(g) \ilo \pi^{-1}(H)$, that is,
$\pi^{-1}(g) \geq g$ and $\pi^{-1}(H) \geq g + 1$. The free
actor~$c$ performs some entity $\pi(c)$; we rule out every
candidate in turn.
\begin{itemize}
\item \emph{An heir.} With $\pi^{-1}(H) \geq g + 1$, the
  bound~\eqref{eq:rigidity-two-range} confines every heir-performer
  to $[1, a) \cup [g + 1, n + 1)$, and $c \in [a, g)$ lies in
  neither range.
\item \emph{A pinned ghost.} If $\pi(c)$ were a pinned ghost~$g''$,
  then $c = \pi^{-1}(g'') = \sigma^{-1}(g'')$. But $\sigma$
  sends~$c$ to the heir~$J$, not to a ghost.
\item \emph{The ghost~$g$ itself.} Its performer satisfies
  $\pi^{-1}(g) \geq g > c$.
\item \emph{Another unmet ghost~$g'$.} By minimality of~$g$ we have
  $g' > g$. The heir $H' := \heir_\pi(g')$ contains $g' > a$ in its
  interior, so $H'$ is not a prefix heir, and
  \eqref{eq:rigidity-two-range} gives
  $\pi^{-1}(H') \geq \pi^{-1}(H) \geq g + 1$. Betweenness at~$g'$
  would place its performer~$c$ strictly between $g'$ and
  $\pi^{-1}(H')$, both of which lie right of~$g$; but $c < g$.
\end{itemize}
Heirs, pinned ghosts, and unmet ghosts exhaust the entities
of~$P_\pi$: no candidate for $\pi(c)$ remains---the free label goes
unclaimed, contradicting that $\pi$ is a bijection defined at~$c$.

\medskip
Both cases are impossible. Hence no unmet ghost exists,
$\Ghosts(P_\pi) = \Ghosts(P_\sigma)$, and, as shown at the outset,
$\sigma = \pi$.
\end{proof}

\subsubsection{From rehearsal success to the lemma's hypotheses}
\label{sec:rigidity-hypotheses}

\begin{corollary}[Successful rehearsal reaches the aspiration]
\label{cor:rehearsal-reaches-aspiration}
Run rehearsal on an $\varepsilon$-candidate casting with
bijection~$\pi$, producing the running assignment~$\sigma$. Then
the run either fails on a spurious crossing or terminates with
$\sigma = \pi$.
\end{corollary}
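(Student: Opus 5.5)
The plan is to apply the rigidity lemma (\Cref{lem:assignment-rigidity}) to the pair $\sigma \le \pi$ at a successful termination. Suppose the run does not fail on a spurious crossing. By construction $\sigma$ is diagonal, and \Cref{lem:sigma-le-pi} gives $\sigma \le \pi$. So it remains to verify hypotheses (\ref{itm:rigidity-heir-pres}) and (\ref{itm:rigidity-G}); once they hold, the lemma yields $\sigma = \pi$. Both hypotheses will come from the crossing property~\ref{itm:P1-crossing} applied to paths that provably never meet.

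\textbf{Step 1: the relevant actors are $\sigma$-active and pairwise non-meeting.} Let $A$ be an actor that, under~$\pi$, performs either an heir or an unmet ghost $g \in \Ghosts(P_\pi)\setminus\Ghosts(P_\sigma)$.
\begin{itemize}
\item Every ghost of $P_\sigma$ has the same performer under $\sigma$ and~$\pi$. Hence $\sigma(A)$ is not a ghost, so $A$ is the representative of an active interval of the final~$P_\sigma$.
\item An actor that is $\sigma$-active at the end was $\sigma$-active throughout the run, since actors only ever leave activity by becoming ghost performers.
\end{itemize}
Now take two such actors $A \ne B$. We claim their casting paths $P_A$ and $P_B$ share no vertex. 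Suppose they first share a vertex~$v$. At that moment both are representatives, so rehearsal's sweep reaches~$v$ with at least two $\sigma$-active representatives present, and it must process a crossing there. By consecutivity (\Cref{sec:proof-adjacent}), the active intervals meeting at $v$ form a contiguous block, which rehearsal merges pairwise in label order. Each accepted merge turns one representative into a ghost performer, and the run is successful, so every crossing in the block is accepted. Processing the whole block merges the intervals of $A$ and $B$, which turns $A$ or $B$ into a ghost performer, contradicting final activity. This includes $v$ being a shared source and high-indegree vertices. I expect this step to need the most care: one must check that the sweep really examines every shared vertex of two current representatives, and that a multi-way block fully merges before the sweep moves on.

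\textbf{Step 2: heir order.} Take heirs $H \ilo H'$, so $y_H \preceq y_{H'}$, and suppose $\pi^{-1}(H') \ilo \pi^{-1}(H)$. Then $x_{\pi^{-1}(H')} \preceq x_{\pi^{-1}(H)}$ with swapped targets. By~\ref{itm:P1-crossing} the two paths intersect, contradicting Step~1. Hence $\pi^{-1}(H) \ilo \pi^{-1}(H')$, which is hypothesis (\ref{itm:rigidity-heir-pres}).

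\textbf{Step 3: betweenness.} For an unmet ghost $g$ with heir $H = \heir(g)$, set $p = \pi^{-1}(g)$ and $h = \pi^{-1}(H)$. By Step~1 their paths are disjoint; in particular $y_g \neq y_H$, so the sign $\varepsilon_g$ is unambiguous.
\begin{itemize}
\item If $\varepsilon_g = +1$, then $\varepsilon$-candidacy (\Cref{def:candidate}) gives $g \ilo p$, and $y_g \preceq y_H$. If $h \ilo p$, then~\ref{itm:P1-crossing} forces the paths of $h$ and $p$ to meet. So $p \ilo h$, giving $g \ilo p \ilo h$, which is~\ref{itm:ghost-right}.
\item If $\varepsilon_g = -1$, the mirror argument gives $h \ilo p \ilo g$, which is~\ref{itm:ghost-left}.
\end{itemize}
Exactly one of the two alternatives holds, since they place $p$ on opposite sides of~$g$. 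This is hypothesis (\ref{itm:rigidity-G}), so the rigidity lemma yields $\sigma = \pi$.
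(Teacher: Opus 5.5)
Your proposal is correct and follows essentially the paper's own route. You obtain $\sigma \le \pi$ and diagonality for free, then derive heir-order preservation and the \ref{itm:ghost-left}/\ref{itm:ghost-right} dichotomy from~\ref{itm:P1-crossing} applied to non-meeting paths of surviving representatives, combined with $\varepsilon$-candidacy, and conclude via \Cref{lem:assignment-rigidity}. Your Step~1 only spells out what the paper takes as immediate from successful termination: the performers of heirs and of unmet ghosts stay $\sigma$-active throughout, and the while-loop at each shared vertex would otherwise consume one of them.
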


\begin{proof}
At a successful termination of rehearsal, the running
assignment~$\sigma$ satisfies $\sigma \leq \pi$
(\Cref{lem:sigma-le-pi}) and is diagonal by construction;
$\pi$ is an $\varepsilon$-candidate by the standing assumption. Successful
termination also means that no two $\sigma$-active actors'
paths cross in the casting. The crossing
property~\ref{itm:P1-crossing} turns this non-crossing into order
preservation: two paths whose sources are in label order,
$x_I \preceq x_K$, but whose endpoints are reversed,
$y_{\pi(K)} \preceq y_{\pi(I)}$, are forced to intersect. So if such
paths do not cross, their endpoints keep the order of the sources:
$I \ilo K$ implies $y_{\pi(I)} \prec y_{\pi(K)}$. This single
observation supplies the two remaining hypotheses of
\Cref{lem:assignment-rigidity}.

\medskip\noindent
\emph{Heir-order-preservation of~$\pi$.} For $\pi$-heirs
$H \ilo H'$, left-to-right order on the endpoints
$y_H, y_{H'}$ (sorted by the heir-position assumption,
\Cref{sec:setup-final}) forces
$\pi^{-1}(H) \ilo \pi^{-1}(H')$.

\medskip\noindent
\emph{The betweenness dichotomy
\ref{itm:ghost-left}/\ref{itm:ghost-right}.} For an unmet
ghost~$g$ of $\pi$---one not yet fired in~$\sigma$---with heir
$H = \heir_\pi(g)$, both performers $\pi^{-1}(g)$ and $\pi^{-1}(H)$ are
$\sigma$-active: the ghost~$g$ has not fired, so $\pi^{-1}(g)$ is still
active; and the performer of a final heir is never consumed (by
$\sigma \leq \pi$, \Cref{lem:sigma-le-pi}, a consumed actor performs a
ghost rather than reaching an heir). Their paths therefore do not cross,
and the order-preservation observation, applied in both label directions,
gives the two-sided
equivalence
\[
\pi^{-1}(g) \ilo \pi^{-1}(H) \iff y_g \prec y_H \iff
\varepsilon_g = +1.
\]
$\varepsilon$-candidacy (\Cref{def:candidate}) is exactly
$\varepsilon_g = +1 \iff g \ilo \pi^{-1}(g)$. Combining the
two equivalences, exactly one of \ref{itm:ghost-left} or
\ref{itm:ghost-right} holds.

\medskip
All hypotheses hold, so \Cref{lem:assignment-rigidity} yields
$\sigma = \pi$.
\end{proof}

\subsection{Successful castings yield valid performances}
\label{sec:successful-to-valid}

\fnote{%
  prose={Rehearsal recovers the prescribed final state; the total, five-way
    outcome classifier.},
  lean={Coalescence.rehearse_chain},
  py={combinatorics.rehearsal}}%
\begin{proposition}[Successful rehearsal recovers the prescribed final state]\label{prop:rehearsal-boundary}
Let $\casting$ be a successful $\varepsilon$-candidate casting. The performance
produced by rehearsal has the prescribed final state~$\FinalState$.
\end{proposition}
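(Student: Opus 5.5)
The argument is short once \Cref{cor:rehearsal-reaches-aspiration} is in hand. I would split the claim ``the performance has the prescribed final state'' into three checks: (a) the recorded genealogy forest has heir set $\Heirs$ and ghost set $\Ghosts$, with each heir's root at $y_H$; (b) each ghost path $\Gamma_g$ starts at the merger vertex $c_g$ and ends at $y_g$; (c) the recorded object is a genuine performance in the sense of \Cref{def:performance}, i.e.\ its trees partition the actors and are vertex-disjoint.

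For (a), I would invoke \Cref{cor:rehearsal-reaches-aspiration}. Since the run did not fail, it terminates with $\sigma = \pi = (P_\FinalState, \pi)$. The partition of the final reality is therefore $P_\FinalState$, so the fired ghosts are exactly $\Ghosts$ and the active intervals are exactly $\Heirs$. Each heir $H$ is represented by the actor $\sigma^{-1}(H) = \pi^{-1}(H)$. In step~\ref{itm:s-success}, the final tree edge of $H$ is the suffix of that actor's casting path, which ends at $y_{\pi(\pi^{-1}(H))} = y_H$. This gives the roots.

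For (b), I would use the validity test. Each ghost $g$ fires at a valid crossing $v$, and its ghost performer is the representative $I$ with $\pi(I) = g$. By construction $\Gamma_g$ is the suffix of $P_I$ from $v$, so it ends at $y_{\pi(I)} = y_g$. It remains to check that $v$ is the vertex $c_g$ of \Cref{def:ghost-paths}, the earliest vertex whose label contains $g$ in its interior. This holds because rehearsal sweeps vertices in the order $\le$. Before $v$, the intervals on the two sides of $g$ are distinct active intervals with disjoint representative paths (any earlier common vertex would have been processed first). At $v$ they merge, and $v$ receives the label $I^- \cup I^+$. For multi-way mergers, I would appeal to the label-order processing of \Cref{sec:proof-high-indegree}, under which several ghosts may legitimately share one $c_g$.

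For (c), the partition property follows from the tiling of each final heir by diagonality. Vertex-disjointness is the main obstacle. Two different final trees would share a vertex $w$ only if representatives of two currently active intervals met at $w$. Rehearsal would then have processed $w$ as a crossing, either merging the intervals (so the trees coincide) or halting, which contradicts success. The delicate part is that tree edges are path segments between consecutive merge points, so one must check that every vertex of every tree edge lies on a path of a then-active representative at the time the sweep reaches it. Here I would again use the fact that the sweep is a linear extension of $\lessdot$, so a vertex of a tree segment is visited while its owner is still active. Finally, the tree edges and ghost suffixes together use exactly the edges of the casting paths, so the weights match.
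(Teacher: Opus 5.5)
Your proposal is correct and follows the paper's proof: parts (a) and (b) are exactly its argument. \Cref{cor:rehearsal-reaches-aspiration} forces $\sigma = \pi$, hence $P_\sigma = P_\FinalState$; the validity test makes each $\Gamma_g$ a suffix of $P_{\pi^{-1}(g)}$ ending at $y_g$; and the never-consumed heir actor $\pi^{-1}(H)$ closes its tree at $y_H$. Your additional checks---that the merger vertex is $c_g$, and that the trees partition the actors and are vertex-disjoint---go beyond what the paper verifies here, since it takes well-formedness of rehearsal's output for granted, and your sweep-order argument for them is sound.
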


\begin{proof}
By \Cref{cor:rehearsal-reaches-aspiration}, successful termination
forces $\sigma = \pi$; in particular the partitions agree at the
end of the run, $P_\sigma = P_\FinalState$.

\medskip\noindent\emph{Ghosts.} Each coalescence adds its junction to
$\Ghosts(P_\sigma)$, so the run fires exactly the junctions of
$\Ghosts(P_\sigma) = \Ghosts$. At the coalescence firing~$g$, the
consumed actor is the aspirant $\pi^{-1}(g)$ (the validity test,
\Cref{def:valid-spurious}), so the ghost path~$\Gamma_g$ is a
suffix of the casting path $P_{\pi^{-1}(g)}$---which ends at
$y_{\pi(\pi^{-1}(g))} = y_g$ by \Cref{def:casting}.

\medskip\noindent\emph{Heirs.} The final active intervals are
$\Active(P_\sigma) = \Heirs$, and the representative of
$H \in \Heirs$ at termination is $\sigma^{-1}(H) = \pi^{-1}(H)$,
never consumed by a coalescence; its casting path
$P_{\pi^{-1}(H)}$ therefore runs to its endpoint
$y_{\pi(\pi^{-1}(H))} = y_H$, closing the genealogy tree of~$H$.

\medskip

The performance thus coalesces at exactly the prescribed ghosts,
with ghost paths ending at the prescribed positions, and its heirs
are exactly~$\Heirs$, with paths reaching the prescribed endpoints:
its final state is~$\FinalState$.
\end{proof}

\section{The performance--casting bijection}
\label{sec:proof-bijection}

We prove that attribution and rehearsal are mutually inverse,
weight-preserving bijections between performances and
\emph{successful} castings. The key observation is that the two
constructions advance the \emph{same} object: the running
assignment~$\sigma$, which starts at the identity assignment and
climbs the assignment poset by one binary coalescence per step, in
the linear order~$\le$ of \Cref{def:spacetime-graph}. The proof
couples the two runs and checks, by induction
over the coalescences, that each step of one is undone by the
corresponding step of the other.
\Cref{sec:attribution-spacetime} prepares the coupling and the
weight argument;
\Cref{sec:proof-bijection-easy,sec:proof-bijection-hard} then verify
the two compositions---one almost definitional, one substantive.

\ifextendedversion
\subsection{Performances and castings as streams}
\else
\subsection{The running assignment as a dictionary}
\fi
\label{sec:attribution-spacetime}

The two-particle rule (\Cref{sec:two-particle}) and its high-indegree
extension (\Cref{sec:proof-high-indegree}) assemble into a single picture,
in which attribution is neither a construction nor a calculation but a
\emph{relabeling}---and the same picture, read backward, is rehearsal.

\ifextendedversion
\subsubsection{The DAG and its flow}

The spacetime DAG is fixed structure: its vertices and edges are settled
before any particle is placed on it. A performance, or a casting, is a
\emph{flow} on this DAG: along each edge something may flow, and what an
edge carries is in general a \emph{multiset} of occurrences, not a
single path. In a \emph{performance} what flows are the entities: at
most one surviving particle per edge, together with any number of
ghosts, which do not interact and so may share an edge with one another
and with the particle (\Cref{fig:intro-21}). In a \emph{casting} what
flows are the actors' paths, any number of which may share
an edge. The flow is what distinguishes one performance, or one
casting, from another; the edges themselves do not change.

It helps to picture the DAG laid out along a time line, time running
from the sources in~$\mathcal{X}$ to the targets in~$\mathcal{Y}$. One
caution is in order: the DAG itself supplies only the \emph{partial}
time ordering~$\lessdot$, under which far-apart parts of the system are
incomparable, so ``the current partition'' at a vertex is not by itself
meaningful. Whenever we speak of an instant---of when an occurrence is
born, which entities are currently alive, how far the partition has
coarsened---we mean a position in the fixed linear extension~$\le$ of
\Cref{def:spacetime-graph}: the state of the system after the vertices
up to that position have been processed. Which linear extension has
been fixed does not matter.

\subsubsection{Events and lives}

Events belong to a \emph{performance}, whose flow records which
meetings merge. An \emph{event} is a coalescence: a vertex at which two
or more surviving particles of the flow meet and merge, one heir
flowing on and one or more ghosts born. Equivalently, an event is
exactly where the partition into active intervals coarsens. A vertex
through which the flow merely passes---carrying a single particle, or
carrying ghosts, which cross everything freely---is not an event;
events are sparse among the vertices the flow visits. The boundary
sets $\mathcal{X}$ and $\mathcal{Y}$ hold no events either: the flow
begins at the sources and ends at the targets. The flow of a
\emph{casting}, by contrast, carries no events of its own: its paths
meet and cross, but which of those meetings are coalescences is not
part of the casting's data---reading it off is precisely rehearsal's
task.

Between events each occurrence persists. The \emph{life} of an
entity---an active cluster or a ghost---is the run of
edges its flow occupies, from its birth (an event, or a source
in~$\mathcal{X}$) to its end (the event that consumes it, or a target
in~$\mathcal{Y}$). A life typically spans many vertices: coalescences fire
elsewhere in the system while the entity flows on untouched. The entities
alive at a given instant are exactly the active intervals together with the
ghosts still in flight.

\subsubsection{Two streams over the vertices}

Reading the flow vertex by vertex turns each object into a \emph{stream},
one entry per vertex, in the linear order~$\le$
(\Cref{def:spacetime-graph}); its substantive entries are the
events---the coalescences---sparse among the pass-through vertices between
them. Performance and casting are two such streams, differing only in how
much each entry records.

A casting reduces to the simplest possible stream: each entry lists which
actors visit that vertex. Nothing more is needed, since an actor's identity
travels with it, so naming the actors present at every vertex pins down
every path. A single entry, however, records only \emph{which} actors meet,
not how their incoming and outgoing path-segments thread through the
vertex---a point that will matter in \Cref{sec:proof-bijection-hard}.

A performance carries a richer entry. Each entry records the partition into
active intervals on entry---which clusters have already coalesced---together
with the entities arriving there: the active intervals that meet and merge,
and the ghosts passing through. This is exactly the
bookkeeping---genealogy and ghost paths---that a performance keeps and a
casting drops.

\subsubsection{The actor behind an edge}

Attribution attaches an actor to each occurrence in the flow, and the
agent that attaches it is the running assignment of the attribution chain
(\Cref{prop:attribution-chain})---the reality~$\sigma$ that climbs from
the identity assignment, written $\sigma_t$ as it advances (the same
$\sigma$ rehearsal runs, \Cref{sec:rehearsal}). At time~$t$ it
names, for each live entity, the actor $\sigma_t^{-1}(\text{entity})$
flowing as it.

As a whole $\sigma_t$ changes at \emph{every} coalescence in the system.
Yet the entry it assigns to a given entity does not move while that entity
is alive. The actor flowing as an occurrence is fixed from the
event that creates the occurrence's entity to the event that consumes it,
and coalescences elsewhere do not touch it; so ``the actor behind an
edge'' is well defined---read $\sigma_t^{-1}$ on the entity flowing there,
at any time during its life, and the answer is the same.

\subsubsection{The dictionary as a cumulative product}

The assignment is built from below. It begins at the identity assignment,
and each coalescence composes in that vertex's local gluing
$\lambda$, the bijection $\{I^-, I^+\} \to \{H, g\}$ of the two-particle
case. So $\sigma_t$ is the cumulative product of these elementary
bijections~$\lambda$ fired up to time~$t$, taken in vertex order, and the
full product is the casting's bijection~$\pi$. This is the algebraic face of
the attribution chain (\Cref{prop:attribution-chain}): a high-indegree
vertex simply contributes its several binary factors in the tie-broken
order (\Cref{sec:proof-high-indegree}), with no separate bookkeeping. The
two properties proved above are one-line readings of the product---each
factor places the performer of its ghost on the side $\varepsilon_g$
dictates, so the product is an $\varepsilon$-candidate
(\Cref{prop:attribution-candidate}); and each factor leaves the inversion
parity unchanged (\Cref{lem:coalescence-inversion-parity}), so the
product is positive (\Cref{prop:attribution-positive}).

\subsubsection{Attribution and rehearsal as stream processors}

Each occurrence therefore has two names: in the performance it is an
entity (the cluster or ghost flowing there), in the casting an actor
(the one whose path flows along it). The assignment is the dictionary
between them: $\sigma_t^{-1}$ converts one into the other, occurrence
by occurrence.
Both attribution and rehearsal run this dictionary as \emph{stream
processors}. Attribution reads the performance stream and emits
the casting stream: at each vertex it consumes the entities arriving there,
advances $\sigma_t$ by the local gluing, and outputs the actors visiting
that vertex; when the stream ends, its terminal state is the casting's
bijection~$\pi$. Rehearsal (\Cref{sec:rehearsal}) is the inverse processor:
its loop~\ref{itm:s-loop} is precisely this vertex sweep, reading the
casting stream and emitting the performance stream while advancing the
\emph{same} $\sigma_t$, now with $\pi$ supplied in advance rather than
produced.

That the two are mutually inverse is then a statement about composing the
processors, checked one coalescence at a time. Since $\sigma_t$ changes only
at coalescences, and both processors start at the identity assignment and
advance it by the same local rule, it is enough to check that a single
coalescence survives the round trip---its entry, pushed through one
processor and back through the other, returns unchanged, leaving both in
the same state $\sigma_{t+1}$---and to induct along the stream. The two orders are not
equally easy: one is
almost definitional (\Cref{lem:reh-attr}); the other must restore the
path-threading that rehearsal discards, and is the substantive half
(\Cref{lem:attr-reh}).

One point makes the composition well-posed, even though $\pi$ is a terminal
output of attribution rather than an initial one. Rehearsal consults $\pi$
only through the single value $\pi^{-1}(g)$ at each junction~$g$---the
performer of the ghost fired there---and that value is settled the moment
$g$ fires and never changes afterward, the chain only growing
(\Cref{prop:attribution-chain}). And $g$ fires at the very vertex whose
crossing makes rehearsal consult $\pi^{-1}(g)$, so the value is available
exactly when required: the processors compose causally, with no genuine
look-ahead.

\else

\subsubsection{What each object records}

A performance names what happens by \emph{entity}: it records which
active intervals merge at each coalescence vertex and which ghost is
born there---the genealogy and the ghost paths---but never how the
path-segments meeting at such a vertex continue one another, a point
that will matter in \Cref{sec:proof-bijection-hard}. A casting names
everything by \emph{actor}: it consists of the actors' paths, any
number of which may share an edge, together with the
bijection~$\pi$, and it records no coalescences at all. Deciding
which crossings are coalescences is rehearsal's task, and only the
\emph{successful} castings describe a performance.

\subsubsection{The coupled induction}

Attribution and rehearsal build the same kind of object: an
increasing chain of diagonal assignments in the assignment poset
(\Cref{def:assignment-order}). Each run starts at the identity
assignment and advances by one binary coalescence per step, taken in
the linear order~$\le$ of \Cref{def:spacetime-graph}; we
write~$\sigma_t$ for the running assignment after the vertices up
to~$t$ have been processed. At every moment $\sigma_t^{-1}$ names,
for each entity, the actor flowing as it: the running assignment is
a dictionary between the two languages above. Attribution uses it to
translate a performance into a casting; rehearsal, when it succeeds,
uses it to translate back. The two lemmas below show that the two
runs share the \emph{same}~$\sigma_t$; the round trip therefore
translates every occurrence out and back with one dictionary, and
returns the original. The two runs differ in how
a step is chosen. Attribution reads a coalescence of the \emph{performance}
and lets the far-side principle pick which of the two incoming
actors performs the new ghost; the terminal value of the chain
defines the output casting's bijection~$\pi$
(\Cref{prop:attribution-chain}). Rehearsal reads a
crossing of two representatives in the \emph{casting} and, when the
validity test passes, extends~$\sigma_t$ by the coalescence whose
ghost performer is~$\pi^{-1}(g)$ (\Cref{def:valid-spurious}). To
prove that the two runs agree, it suffices to consider a single
step, assume the two assignments equal before it, and check that
the steps coincide---then induct. One point deserves care: $\pi$ is
attribution's \emph{terminal} output, yet rehearsal needs it from
the start. There is no circularity, because rehearsal consults~$\pi$
only through the single value $\pi^{-1}(g)$ at the coalescence
firing~$g$, and the chain settles that value at this very vertex and
never revises it.

\fi

\subsubsection{Weight as a regrouped product}
\label{sec:weight-regrouped}

Read as a relabeling, attribution makes its central property---that it
preserves weight---a matter of bookkeeping rather than calculation. The
weight of a performance is a product over its edges grouped by the
genealogy,
\[
w(\perf) = \prod_{\text{tree edges } e} w(e) \cdot
           \prod_{g \in \Ghosts} w(\Gamma_g)
\]
(\Cref{def:performance}); the weight of a casting is the same product
grouped instead by actor, $w(\paths) = \prod_{I} w(P_I)$. The two multiply
the \emph{same} multiset of edge weights---the dictionary only re-sorts
the factors, from grouped-by-entity to grouped-by-actor, and the ``at
most one particle, any number of ghosts'' tally ensures every factor is
claimed exactly once. Hence $w(\perf) = w(\paths)$, the
weight-preservation that underlies the performance--casting bijection
(\Cref{prop:performance-casting-bijection}).

\subsection{The easy direction: rehearsal undoes attribution}
\label{sec:proof-bijection-easy}

\fnote{%
  prose={Attribution then rehearsal is the identity; a round-trip theorem
    in \texttt{Combinatorics/Operations/Inverse.lean}.},
  lean={Coalescence.canonical_casting}}%
\begin{lemma}[Coupling: attribution then rehearsal]\label{lem:reh-attr}%

Let $\perf$ be a performance and $\sigma_t$ attribution's running
assignment along it (\Cref{prop:attribution-chain}), indexed by the
vertices~$t$. Then rehearsal on the casting $\Attribution(\perf)$ holds the
same $\sigma_t$ at every~$t$; in particular it runs without a spurious
crossing and returns~$\perf$, so $\Rehearsal \circ \Attribution = \id$.
\end{lemma}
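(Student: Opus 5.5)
We prove the lemma by induction along the vertex sweep in the linear order~$\le$. We couple attribution's chain $\sigma_t$ (\Cref{prop:attribution-chain}) with rehearsal's running assignment $\sigma'_t$ on the casting $\Attribution(\perf) = (\pi,\paths)$, and show that $\sigma'_t = \sigma_t$ at every vertex, and that the partial performance rehearsal has built up to~$t$ agrees with~$\perf$ restricted to the vertices $\le t$. The base case is immediate: both runs start at the identity assignment, and rehearsal starts with the empty performance.

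\textbf{Inductive step.} Assume $\sigma'_s = \sigma_s$ for all vertices $s < v$, and consider~$v$. The $\sigma$-active representatives are the actors $\sigma_t^{-1}(J)$ for the active intervals~$J$. Their casting paths are glued by attribution so that, up to~$v$, each such path runs along the edges occupied by the entity~$J$ in~$\perf$: the ``actor behind an edge'' is well defined.

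\emph{Two active representatives meet at~$v$ iff $v$ is a coalescence vertex of~$\perf$.} The reverse direction is clear. For the forward direction, suppose two active representatives meet at~$v$. Their current segments are tree edges of the genealogy forest, and the forest's trees are vertex-disjoint. So the two trees coincide, and two distinct active entities occupying the same vertex must merge there. Hence rehearsal acts at exactly the coalescence vertices of~$\perf$, and at such a vertex it sees the same contiguous block of active intervals. Both runs process this block by the same left-to-right nested pairing (\Cref{sec:proof-high-indegree}). Ghost paths may cross everything, but ghost performers are no longer $\sigma$-active, so they never trigger crossings. This is the point that needs a careful check.

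\emph{Each binary step is valid and is fired the same way.} Take the leftmost adjacent pair $I^-, I^+$ with junction~$g$. Attribution's far-side rule sends one of the two representatives to~$g$, and that assignment is never revised later, so it equals $\pi^{-1}(g)$. Thus $g \in \{\pi(I),\pi(J)\}$, the crossing is valid, and rehearsal's update---sending $\pi^{-1}(g)$ to~$g$ and the other representative to $H = I^-\cup I^+$---is exactly attribution's step. The recorded data also match. The ghost path is the suffix from~$v$ of $P_{\pi^{-1}(g)}$, and by construction of the gluing this suffix is~$\Gamma_g$. The tree edges recorded are the segments of the two representative paths since their last merger, and these are the genealogy edges of~$\perf$.

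\textbf{Termination.} After the last vertex, the two assignments agree and no spurious crossing has occurred, so rehearsal terminates successfully. The remaining representative suffixes are the final heir edges of~$\perf$, so rehearsal returns exactly~$\perf$. Therefore $\Rehearsal\circ\Attribution = \id$.

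I expect the main obstacle to be the first claim of the inductive step: that rehearsal's notion of ``crossing'' (two actor paths sharing a vertex) detects exactly the coalescence vertices. The argument must use the vertex-disjointness of the forest together with the fact that only active representatives are tested, and it must handle the case where representative paths share a vertex before the coalescence point.
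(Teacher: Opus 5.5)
Your proposal is correct and follows essentially the paper's own argument: a coupled induction over the coalescences in the order $\le$. You show that active representatives cross only at mergers (vertex-disjoint trees, trajectories within one tree meeting only at their mergers, ghost performers already retired), that each merge passes the validity test because the ghost performer $\pi^{-1}(g)$ named by the far-side rule is never revised, and that the recorded suffixes and inter-merger segments are exactly $\Gamma_g$ and the tree edges of $\perf$. The obstacle you flag---representative paths sharing a vertex before their merger---is excluded by each genealogy tree being embedded in $D$, which is what the paper relies on when it asserts that trajectories of one tree meet only at their mergers.
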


\begin{proof}
This direction is almost definitional: what attribution emits at a
coalescence, rehearsal reads straight back. We follow a single binary
coalescence and induct over the coalescences in the linear
order~$\le$, both runs starting at the identity assignment.

First, rehearsal advances $\sigma_t$ at exactly the coalescences of~$\perf$.
Between coalescences the casting's active representatives do not cross: an
actor's path runs along its genealogy-tree trajectory; trajectories of one
tree meet only at their mergers, and distinct trees are vertex-disjoint
(\Cref{def:genealogy-forest}); and a ghost path issues only from the
vertex that consumes its performer (\Cref{def:ghost-paths}), an actor
that is from then on no longer a representative. So the only
crossings are the merges themselves.

At such a merge two adjacent active intervals $I^-, I^+$ meet at junction~$g$,
one heir flowing on and the ghost~$g$ born. Attribution assigns the ghost
role to the actor that will follow the ghost path, namely $\pi^{-1}(g)$,
placed on the side $\varepsilon_g$ dictates
(\Cref{fig:simplified-coalescence-negative,fig:simplified-coalescence}); the
casting it emits records which actors visit~$g$, and how it stitches their
paths through the vertex plays no part here. Reading the casting,
rehearsal recovers the same coalescence: the merge of $I^-, I^+$ at~$g$ is
forced, and the ghost performer it reconstructs is the actor $\pi^{-1}(g)$
attribution just named---fixed once~$g$ fires, as the chain only grows
(\Cref{prop:attribution-chain}). No spurious crossing can arise, the casting
issuing from a genuine performance. The two updates of $\sigma_t$
coincide, so the two chains agree throughout.

It remains to check that the performance rehearsal assembles
is~$\perf$ itself, paths included. Attribution built each casting
path by gluing segments of~$\perf$---tree edges and ghost paths---at
the coalescence vertices (\Cref{def:attribution}); rehearsal cuts
the casting paths back at the same vertices, the two runs sharing
their coalescences. The suffix it records as the ghost path of~$g$
is therefore the ghost path~$\Gamma_g$ of~$\perf$, and the segments
between consecutive mergers---with the final suffixes that close the
trees---are the tree edges of~$\perf$. The genealogy trees and the
ghost paths coincide, and rehearsal returns~$\perf$.
\end{proof}

\subsection{The hard direction: attribution undoes rehearsal}
\label{sec:proof-bijection-hard}

\fnote{%
  prose={Rehearsal then attribution is the identity; the other round-trip
    in \texttt{Combinatorics/Operations/Inverse.lean}.},
  lean={Coalescence.canonical_casting}}%
\begin{lemma}[Coupling: rehearsal then attribution]\label{lem:attr-reh}%

Let $\casting = (\pi, \paths)$ be a successful casting, $\sigma_t$
rehearsal's running assignment on it, and $\perf = \Rehearsal(\casting)$
(of final state~$\FinalState$, \Cref{prop:rehearsal-boundary}). Then
attribution on~$\perf$ holds the same $\sigma_t$ at every~$t$; in
particular $\Attribution(\perf) = \casting$, so
$\Attribution \circ \Rehearsal = \id$.
\end{lemma}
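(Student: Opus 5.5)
The plan is to mirror the coupled induction of \Cref{lem:reh-attr}, now started from the casting side. Both runs begin at the identity assignment and advance by one binary coalescence per step in the linear order~$\le$. I will show by induction over the vertices~$t$ that attribution on $\perf = \Rehearsal(\casting)$ holds the same $\sigma_t$ as rehearsal on~$\casting$. I will also show that, up to~$t$, the glued path segments attribution has produced agree with the prefixes of the casting paths~$P_I$.

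\emph{Step 1: same events, same order.} By construction of rehearsal (\ref{itm:s-loop}), the coalescence vertices of~$\perf$ are exactly the valid crossings that rehearsal fired. At a vertex carrying several intervals, rehearsal fired successive adjacent pairs in label order, and attribution's nested pairing (\Cref{sec:proof-high-indegree}) processes them in the same order. So the two runs face the same sequence of binary merges $I^-, I^+ \to H, g$. In each, the merging intervals are adjacent active intervals of the common $\sigma_t$ (consecutivity, \Cref{sec:proof-adjacent}).

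\emph{Step 2: the same ghost performer.} Assume the two runs hold the same $\sigma_t$ just before the merge at junction~$g$. Rehearsal sends the ghost role to $\pi^{-1}(g)$, which is one of the representatives $\sigma_t^{-1}(I^-)$, $\sigma_t^{-1}(I^+)$. By diagonality, the representative of~$I^-$ has its label in $I^- = [a,g)$, hence lies $\ilo g$, and the representative of~$I^+$ lies $\igo g$. By $\varepsilon$-candidacy (\Cref{def:candidate}), $\pi^{-1}(g) \ilo g$ exactly when $\varepsilon_g = -1$. So $\pi^{-1}(g)$ is the representative of $I^-$ when $\varepsilon_g=-1$ and of $I^+$ when $\varepsilon_g=+1$. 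This is precisely the actor the far-side principle (\Cref{sec:two-particle}) makes the ghost. Both runs therefore extend $\sigma_t$ by the same local gluing $\lambda$, and the induction on $\sigma_t$ closes. In particular, attribution's terminal bijection is the terminal $\sigma$ of rehearsal, which equals~$\pi$ by \Cref{cor:rehearsal-reaches-aspiration}.

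\emph{Step 3: path threading.} This is the step I expect to be the main obstacle, because a performance forgets how segments continue through a coalescence vertex. The remedy is the dictionary just established. Rehearsal cut each casting path $P_I$ at the coalescence vertices it passes while $I$ is a representative. The cut pieces are as follows:
\begin{itemize}
\item the pieces between consecutive mergers become tree edges;
\item the suffix of the ghost performer $\pi^{-1}(g)$ from $c_g$ becomes $\Gamma_g$;
\item the survivor's suffix continues as the tree edge of~$H$.
\end{itemize}
Attribution glues, for each actor, its incoming segment to the outgoing segment of the entity that $\lambda$ assigns it. By Step~2 this is the ghost path~$\Gamma_g$ for $\pi^{-1}(g)$, and the continuing heir trajectory for the survivor. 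Each of these outgoing pieces is, by definition of rehearsal, a piece cut from that same actor's casting path. So each glued path is a reassembly of the cuts of a single~$P_I$, hence equals~$P_I$.

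One point needs checking: after an actor becomes a ghost, rehearsal never cuts its path again. This holds because it is no longer a representative, so its suffix is exactly $\Gamma_g$ and ends at $y_g$. It follows that $\Attribution(\perf) = (\pi,\paths) = \casting$.
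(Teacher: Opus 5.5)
Your proposal is correct and follows the paper's proof: the same coupled induction on $\sigma_t$ over the coalescences in the order~$\le$, with $\varepsilon$-candidacy ($g \ilo \pi^{-1}(g) \iff \varepsilon_g = +1$) forcing rehearsal's ghost performer $\pi^{-1}(g)$ to be exactly the actor the far-side principle selects, so the two local gluings coincide. You also spell out two steps the paper leaves implicit: diagonality places the two representatives on opposite sides of~$g$, and each glued path reassembles the cuts of a single casting path~$P_I$.
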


\begin{proof}
This is the substantive direction, because rehearsal discards information
that attribution must restore. As in the easy direction, both runs
advance $\sigma_t$ at the same steps: the coalescences of~$\perf$ are
exactly the valid crossings that rehearsal processed
(\Cref{def:successful-casting}), and attribution, reading~$\perf$,
meets them in the same order~$\le$. It therefore suffices to follow a
single such coalescence and induct.

At a junction~$g$ two active representatives cross. The four path-segments
incident to the vertex---two incoming, two outgoing---are paired in the
casting, but rehearsal, recording only the merge and the ghost it frees,
\emph{disconnects} them: the performance it builds no longer says which
incoming segment continued as which outgoing one. This is no accident of the
algorithm but of the representation: a performance names what merges and what
is freed, never the actor-paths themselves (\Cref{sec:attribution-spacetime}).
To return the original casting, attribution must reconnect the four segments
the way rehearsal found them.

There are exactly two reconnections, the two gluings of the two-particle
case: $\lambda_-$ draws the ghost from $I^-$, $\lambda_+$ from $I^+$
(\Cref{fig:simplified-coalescence-negative,fig:simplified-coalescence}).
Rehearsal's choice of ghost performer---$\pi^{-1}(g)$, the only choice
$\sigma \leq \pi$ admits (\Cref{lem:sigma-le-pi})---singles out one of
the two. Attribution chooses
by the ghost sign, drawing the ghost from the side $\varepsilon_g$ dictates.
The two choices agree because the casting is an $\varepsilon$-candidate,
which ties $\varepsilon_g$ to the side of $\pi^{-1}(g)$:
$g \ilo \pi^{-1}(g) \iff \varepsilon_g = +1$ (\Cref{def:candidate}). So the
side $\varepsilon_g$ sends attribution to is the side $\pi^{-1}(g)$ sits on,
and attribution reapplies the very gluing rehearsal used. The lost pairing is
recovered from $\varepsilon_g$, which the casting carries---never stored as
path data. The two updates of $\sigma_t$ coincide, so $\sigma_t$ agrees
throughout and attribution returns~$\casting$.
\end{proof}

The two couplings compose to the identity in both orders, and the
weight bill has already been paid (\Cref{sec:weight-regrouped}):

\fnote{%
  prose={The weight-preserving performance--casting bijection.},
  lean={Coalescence.canonical_casting}}%
\begin{proposition}[Performance--casting bijection]\label{prop:performance-casting-bijection}
For fixed final state~$\FinalState$, attribution and rehearsal are
mutually inverse, weight-preserving bijections between performances and
successful castings: corresponding objects satisfy
$w(\perf) = w(\paths)$.
\end{proposition}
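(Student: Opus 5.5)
The proof assembles results already established; the only work is checking that each map lands in the right set. I will first show that attribution $\Attribution$ sends every performance $\perf$ with final state $\FinalState$ to a successful casting. By \Cref{prop:attribution-candidate}, $\Attribution(\perf)$ is an $\varepsilon$-candidate casting. Its paths run from $x_I$ to $y_{\pi(I)}$ by construction (\Cref{def:attribution}), so it is a genuine casting in the sense of \Cref{def:casting}. By \Cref{lem:reh-attr}, rehearsal on it encounters no spurious crossing, so it is successful in the sense of \Cref{def:successful-casting}.

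Conversely, rehearsal $\Rehearsal$ sends every successful casting $\casting$ to a performance. The genealogy forest and ghost paths assembled in \ref{itm:s-loop}--\ref{itm:s-success} have the tree, partition, and non-intersection structure of \Cref{def:genealogy-forest}. For non-intersection, I would argue that distinct trees are never joined: any shared vertex of two active representatives is a crossing, which rehearsal either fires or rejects. By \Cref{prop:rehearsal-boundary}, this performance has final state $\FinalState$.

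With both maps well defined between the stated sets, I would quote \Cref{lem:reh-attr} for $\Rehearsal\circ\Attribution=\id$ and \Cref{lem:attr-reh} for $\Attribution\circ\Rehearsal=\id$. Mutual inverses are bijections, so this settles bijectivity.

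For weight preservation, I would apply the regrouping argument of \Cref{sec:weight-regrouped} to $\casting=\Attribution(\perf)$. The step needing care is that every edge factor is counted exactly once on both sides. Each casting path $P_I$ is a concatenation of tree-edge segments, which are the trajectories of the clusters $I$ represented, followed by at most one ghost path. Each tree edge of the performance is carried by exactly one actor, namely the representative $\sigma_t^{-1}$ of the cluster flowing there, and the representative of a cluster is constant during that cluster's life. Each ghost path $\Gamma_g$ is carried by exactly its performer $\pi^{-1}(g)$.

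So the multiset of edges in $\bigcup_I P_I$, counted with multiplicity, equals the multiset of tree edges plus ghost-path edges. This gives $w(\paths)=w(\perf)$. The direction starting from a successful casting then follows by applying this to $\perf=\Rehearsal(\casting)$ and using $\Attribution(\perf)=\casting$.

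The main obstacle I expect is this exact-once bookkeeping at high-indegree vertices and at coinciding starting positions, where segments have length zero. I would handle both by the nested binary pairing of \Cref{sec:proof-high-indegree}, under which each binary step passes one incoming segment on and hands the other to the new ghost, so no edge is duplicated or dropped.
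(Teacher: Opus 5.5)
Your proposal is correct and takes essentially the paper's route: the paper's proof is just \Cref{lem:reh-attr,lem:attr-reh} composed in both orders, with weight preservation coming from the same-multiset regrouping of \Cref{sec:weight-regrouped}. Your extra checks make explicit what the paper leaves packaged inside those lemmas and \Cref{prop:rehearsal-boundary}: that each map lands in the right set, that the genealogy forest rehearsal builds is vertex-disjoint, and that each edge is counted exactly once.
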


\begin{proof}
Immediate from \Cref{lem:reh-attr,lem:attr-reh}; weight is preserved
because performance and casting multiply the same multiset of edge
weights (\Cref{sec:weight-regrouped}).
\end{proof}

\section{The sign-reversing involution}\label{sec:involution}

Rehearsal (\Cref{sec:rehearsal}) sorts $\varepsilon$-candidate castings into
successful and failed ones. To prove the determinant formula we show
that the failed castings cancel in signed pairs, leaving only the
successful ones. The pairing mechanism is \emph{segment swap}, the
classical first-crossing exchange (\Cref{sec:classic-proof}); the one
new ingredient is \emph{where} to apply it. As previewed in
\Cref{sec:intro-lgv}, the classical Karlin--McGregor /
Lindstr\"om--Gessel--Viennot proof forbids every crossing and swaps at
the first one; here crossings are \emph{prescribed} by the coalescence
pattern, and rehearsal locates the first \emph{spurious} crossing---a
place where the aspiration~$\pi$ calls for a coalescence that the
reality~$\sigma$ cannot carry out. Segment swap edits~$\pi$ there by the
transposition~$(I\;J)$, so the involution pairs each failed casting with
the one differing only in that single spurious crossing; the
$\varepsilon$-candidacy condition guarantees the swapped casting is again
an $\varepsilon$-candidate, and the surviving fixed points are exactly the
castings whose aspiration is realizable throughout.
\Cref{sec:proof-swap} establishes segment swap;
\Cref{sec:proof-global-involution} assembles the involution~$\iota$,
whose fixed points are the successful castings---already identified with
performances in \Cref{sec:proof-bijection}.

\subsection{Segment swap}
\label{sec:proof-swap}

\subsubsection{Definition and basic properties}

Given a casting $(\pi, \paths)$ and two actors $I$, $J$
whose paths cross, segment swap exchanges their suffixes at
the first crossing~$v$: the new path $P_I'$ follows $P_I$
to~$v$, then continues along $P_J$; symmetrically for
$P_J'$. The bijection updates accordingly:
$\pi' = (I\; J) \circ \pi$, exchanging the destinations of
$I$ and~$J$. See \Cref{fig:segment-swap}.

\begin{figure}[t]
\centering
\captionsetup[subfigure]{justification=centering}

\subfloat[Before swap: paths cross at $c$]{%
\begin{tikzpicture}[scale=0.6,
    vertex/.style={circle, fill, inner sep=1.5pt}]

\draw[gray, thick] (0, 0) -- (6, 0);
\draw[gray, thick] (0, 4) -- (6, 4);
\node[right] at (6.2, 0) {\small $t=0$};
\node[right] at (6.2, 4) {\small $t=T$};

\node[vertex, colA] (x1) at (1,0) {};
\node[below] at (x1) {$x_1$};
\node[vertex, colB] (x2) at (3,0) {};
\node[below] at (x2) {$x_2$};

\node[vertex, colA] (y1) at (2,4) {};
\node[above] at (y1) {$y_1$};
\node[vertex, colB] (y2) at (5,4) {};
\node[above] at (y2) {$y_2$};

\node[vertex, black] (v) at (3,2) {};
\node[right=0.2] at (v) {$c$};

\draw[particleA] (x1) -- (2,1) -- (v);
\draw[particleA] (v) -- (y1);

\draw[particleB] (x2) -- (v);
\draw[particleB] (v) -- (4,3) -- (y2);

\begin{scope}[shift={(7,1)}]
  \draw[particleA] (0,1.2) -- (0.7,1.2);
  \node[right] at (0.8,1.2) {\small $P_1$};
  \draw[particleB] (0,0.4) -- (0.7,0.4);
  \node[right] at (0.8,0.4) {\small $P_2$};
\end{scope}

\end{tikzpicture}%
\label{fig:swap-before}%
}
\hfill
\subfloat[After swap: endpoints exchanged]{%
\begin{tikzpicture}[scale=0.6,
    vertex/.style={circle, fill, inner sep=1.5pt}]

\draw[gray, thick] (0, 0) -- (6, 0);
\draw[gray, thick] (0, 4) -- (6, 4);
\node[right] at (6.2, 0) {\small $t=0$};
\node[right] at (6.2, 4) {\small $t=T$};

\node[vertex, colA] (x1) at (1,0) {};
\node[below] at (x1) {$x_1$};
\node[vertex, colB] (x2) at (3,0) {};
\node[below] at (x2) {$x_2$};

\node[vertex, colB] (y1) at (2,4) {};
\node[above] at (y1) {$y_1$};
\node[vertex, colA] (y2) at (5,4) {};
\node[above] at (y2) {$y_2$};

\node[vertex, black] (v) at (3,2) {};
\node[right=0.2] at (v) {$c$};

\draw[particleA] (x1) -- (2,1) -- (v);
\draw[particleA] (v) -- (4,3) -- (y2);

\draw[particleB] (x2) -- (v);
\draw[particleB] (v) -- (y1);

\begin{scope}[shift={(7,1)}]
  \draw[particleA] (0,1.2) -- (0.7,1.2);
  \node[right] at (0.8,1.2) {\small $P'_1$};
  \draw[particleB] (0,0.4) -- (0.7,0.4);
  \node[right] at (0.8,0.4) {\small $P'_2$};
\end{scope}

\end{tikzpicture}%
\label{fig:swap-after}%
}
\caption{\textbf{Segment swap: the sign-reversing involution.}
(a)~Paths $P_1$ (solid) and $P_2$ (double) cross at vertex $c$.
(b)~After the swap, final segments are exchanged: $P'_1$ follows $P_1$
to $c$, then $P_2$'s tail to $y_2$; $P'_2$ follows $P_2$ to $c$, then
$P_1$'s tail to $y_1$. The bijection updates to $\pi' = (1\ 2) \circ \pi$,
reversing the sign. Failed castings cancel in pairs via this operation.}
\label{fig:segment-swap}
\end{figure}

\fnote{%
  prose={Segment swap is involutive, weight-preserving, and sign-reversing.},
  lean={Coalescence.CastingHopChain.segmentSwap, Coalescence.Valuation.segment_swap_property},
  py={valuations.segment_swap_involution}}%
\begin{lemma}[Segment swap is involutive, weight-preserving, and sign-reversing]\label{lem:swap-properties}
Segment swap is:
\begin{enumerate}[label=(\roman*),
                   ref=(\roman*)]
\item \label{itm:swap-involution}
  \textbf{An involution}: swapping twice recovers the original.
\item \label{itm:swap-weight}
  \textbf{Weight-preserving}: same path segments, same total weight.
\item \label{itm:swap-sign}
  \textbf{Sign-reversing}: $\gasgnof{\pi'} = -\gasgnof{\pi}$.
\end{enumerate}
\end{lemma}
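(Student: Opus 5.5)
Throughout, fix a casting $(\pi,\paths)$, two actors $I\neq J$ whose paths cross, and their first crossing $v$ in the linear order~$\le$. Write $(\pi',\paths')$ for the swapped casting, with $\pi'=(I\;J)\circ\pi$ and $P'_I = P_I|_{\le v}\cdot P_J|_{\ge v}$, $P'_J = P_J|_{\le v}\cdot P_I|_{\ge v}$. All other paths are unchanged. The three claims are checked separately, and only the involution claim needs any care.

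For \ref{itm:swap-involution}, the first step is to show that the swap does not move the first crossing. The key observation is that the vertices of $P'_I\cup P'_J$ strictly before~$v$ in~$\le$ are exactly those of $P_I\cup P_J$ before~$v$: prefixes are kept, and every vertex of a suffix from $v$ is $\ge v$, because it lies on a directed path issuing from~$v$ and $\le$ extends~$\lessdot$. Consequently $P'_I\cap P'_J$ contains $v$ and nothing earlier. Any earlier common vertex would have to lie on both prefixes, contradicting the minimality of~$v$. Hence swapping $(\pi',\paths')$ at $I,J$ uses the same vertex~$v$, restores both suffixes, and $(I\;J)^2=\id$ restores~$\pi$. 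I expect this step, the invariance of the first crossing, to be the only real obstacle. The point to watch is that ``first'' refers to the fixed linear extension, so the argument must use that each suffix lies weakly above its starting vertex. If the swap is later applied at the failure pair chosen by rehearsal rather than at a literal first crossing, the same prefix-invariance argument is what must be reused there.

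For \ref{itm:swap-weight}, the multiset of edges used by $\paths'$ equals that of~$\paths$. The edges of $P_I$ and $P_J$ are merely redistributed between the two new paths, split at the vertex~$v$. Therefore $w(P'_I)\,w(P'_J) = w(P_I)\,w(P_J)$, and the other factors of $w(\paths)$ are untouched.

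For \ref{itm:swap-sign}, this is exactly \Cref{cor:gasgn-transposition} applied to $\pi'=(I\;J)\circ\pi$. That corollary follows from \Cref{lem:gasgn-eval}, because $\prod_g\varepsilon_g$ depends only on the final state, which is unchanged by the swap. Note that membership of $\pi'$ in $\Candidates$ is not part of this lemma; it is deferred to the assembly of the involution.
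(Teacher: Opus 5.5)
Your proof is correct and matches the paper's: weight preservation because the same multiset of edges is redistributed, and sign reversal via \Cref{cor:gasgn-transposition} applied to $\pi'=(I\;J)\circ\pi$. For the involution property the paper simply asserts that swapping twice at $v$ restores everything; your check that $v$ is still the first crossing of $P'_I$ and $P'_J$ (prefixes lie weakly below $v$ in $\le$, suffixes weakly above it) supplies a detail the paper leaves implicit.
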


\begin{proof}
\ref{itm:swap-involution} Swapping at $v$ twice restores
the original paths and bijection.
\ref{itm:swap-weight} The path segments are redistributed
but the multiset of edges is unchanged, so total weight is
preserved.
\ref{itm:swap-sign} The updated bijection
$\pi' = (I\; J) \circ \pi$ differs from $\pi$ by a transposition, so
$\gasgnof{\pi'} = -\gasgnof{\pi}$ by the transposition rule
(\Cref{cor:gasgn-transposition}).
\end{proof}

\subsubsection{The swap criterion}

Segment swap of an arbitrary $\varepsilon$-candidate casting need not itself
be an $\varepsilon$-candidate. The next lemma pins down exactly when it is,
at the adjacent pair rehearsal crosses.

\fnote{%
  prose={The swap criterion: a swap at a spurious crossing stays an
    $\varepsilon$-candidate.},
  lean={Coalescence.CastingHopChain.segmentSwap},
  py={valuations.segment_swap_involution}}%
\begin{lemma}[Swap criterion]\label{lem:swap-criterion}
At each iteration of rehearsal, let
$I^- \ilo I^+$ be the adjacent active intervals at
junction~$g$, with representative actors
$I = \sigma^{-1}(I^-)$ and $J = \sigma^{-1}(I^+)$ whose paths cross.
Then the segment swap of the casting at $I, J$ is again an
$\varepsilon$-candidate if and only if
\[
  g \notin \{\pi(I), \pi(J)\},
\]
that is, unless one of the two crossing representatives aspires to
ghost~$g$.
\end{lemma}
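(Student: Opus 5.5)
The plan is to reduce $\varepsilon$-candidacy of $\pi' = (I\;J)\circ\pi$ to a statement about which ghosts change performer, and then settle that statement by an interval comparison. The bijection $\pi'$ agrees with $\pi$ except that $\pi'(I) = \pi(J)$ and $\pi'(J) = \pi(I)$. The $\varepsilon$-candidacy condition (\Cref{def:candidate}) is a separate condition for each ghost, on the side of its performer. So it can change only at the at most two ghosts among $\pi(I), \pi(J)$; heir roles impose no condition. For a ghost $h \in \{\pi(I), \pi(J)\}$ the performer switches from one of $I, J$ to the other. Since $\pi$ is an $\varepsilon$-candidate, $\pi'$ satisfies the condition at $h$ if and only if $I$ and $J$ lie on the same $\ilo$-side of the junction~$h$. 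Hence $\pi'$ is an $\varepsilon$-candidate exactly when no ghost in $\{\pi(I), \pi(J)\}$ separates $I$ from $J$.

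Next I will make ``separates'' concrete using integer labels. Since $I = \sigma^{-1}(I^-)$ and $\sigma$ is diagonal, the label of $I$ lies in $I^- = [a, g)$; likewise the label of $J$ lies in $I^+ = [g, c)$. So $I < g \le J$ as integers. By the label-order rules, $I \ilo h$ iff $I < h$, and $h \ilo J$ iff $h \le J$. Therefore a junction $h$ separates $I$ from $J$ exactly when $I < h \le J$. In particular $g$ always separates them, which gives the ``only if'' direction at once: if $g \in \{\pi(I), \pi(J)\}$, the swap moves the performer of $g$ to the wrong side, and $\pi'$ is not an $\varepsilon$-candidate.

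For the converse, suppose $g \notin \{\pi(I), \pi(J)\}$, and let $h \neq g$ be a ghost with $I < h \le J$. I will show that $h$ cannot be $\pi(I)$ or $\pi(J)$. If $h < g$, then $a \le I < h < g$, so $h$ is interior to the $\sigma$-active interval $I^-$. If $h > g$, then $g < h \le J < c$, so $h$ is interior to $I^+$. In either case $h$ is a junction interior to an active interval of $P_\sigma$, hence a ghost of $P_\sigma$ (\Cref{sec:proof-assignments}). It is therefore pinned: by \Cref{lem:sigma-le-pi}, $\pi^{-1}(h) = \sigma^{-1}(h)$. But $\sigma$ sends $I$ and $J$ to the active intervals $I^-, I^+$, not to ghosts, so $\pi^{-1}(h) \notin \{I, J\}$. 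Thus no ghost in $\{\pi(I), \pi(J)\}$ separates $I$ from $J$, and $\pi'$ is an $\varepsilon$-candidate.

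The main obstacle is this pinning step. Its role is to exclude the case where an interior ghost of $I^-$ or $I^+$ is performed by a current representative. It relies on diagonality of $\sigma$ and on $\sigma \le \pi$ holding at the iteration in question, which is exactly what the hypothesis ``at each iteration of rehearsal'' supplies.
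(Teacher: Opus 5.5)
Your proof is correct and follows essentially the same argument as the paper. The junction $g$ always separates $I$ from $J$, ghosts outside $I^-\cup I^+$ cannot separate them, and ghosts interior to $I^-$ or $I^+$ are pinned by $\sigma\le\pi$, so the current representatives do not perform them. The only difference is bookkeeping: you split cases by which ghosts change performer (those in $\{\pi(I),\pi(J)\}$) and write separation explicitly as $I<h\le J$, while the paper checks every ghost ($h=g$, $h$ outside, $h$ inside); the ingredients are the same.
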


\begin{proof}
Segment swap exchanges $\pi$ at $I$ and~$J$ and leaves
$\pi$ unchanged elsewhere; the swapped casting fails
$\varepsilon$-candidacy precisely when the swap moves some ghost's
performer to the wrong side. We check each ghost.

\medskip\noindent\emph{The current junction $h = g$.}
The performer at~$g$ changes under the swap if and only if
$g \in \{\pi(I), \pi(J)\}$; in that case the new performer
is the other actor of~$\{I, J\}$. Since $I \in I^- = [a, g)$
and $J \in I^+ = [g, c)$ lie on opposite sides of~$g$, the
performer of~$g$ switches sides, so $\varepsilon$-candidacy at~$g$ flips. If
$g \notin \{\pi(I), \pi(J)\}$ the performer at~$g$ is
unchanged and $\varepsilon$-candidacy at~$g$ is preserved.

\medskip\noindent\emph{Ghosts $h \neq g$ outside $I^- \cup I^+$.}
Both $I$ and~$J$ lie inside the contiguous interval
$I^- \cup I^+$, hence on the same side of~$h$. Whether or
not the performer at~$h$ changes, it stays on the same side
of~$h$, so $\varepsilon$-candidacy at~$h$ is preserved.

\medskip\noindent\emph{Ghosts $h \neq g$ inside $I^- \cup I^+$.}
Such an $h$ lies interior to the active interval $I^-$ or $I^+$, so it is
already a ghost of $P_\sigma$. By $\sigma \leq \pi$
(\Cref{lem:sigma-le-pi}) its performer is pinned,
$\pi^{-1}(h) = \sigma^{-1}(h)$---the actor consumed when $h$ fired, no
longer a survivor. The crossing representatives $I$ and $J$ are survivors,
so $\pi^{-1}(h) \notin \{I, J\}$; the swap does not affect the performer
at~$h$, and $\varepsilon$-candidacy at~$h$ is preserved.

\medskip

Combining the three cases, the swapped casting fails
$\varepsilon$-candidacy if and only if $g \in \{\pi(I), \pi(J)\}$;
equivalently, it remains an $\varepsilon$-candidate exactly when
$g \notin \{\pi(I), \pi(J)\}$.
\end{proof}

\fnote{%
  prose={A segment swap at a spurious failure stays an $\varepsilon$-candidate.},
  lean={Coalescence.segmentSwapAtFailure},
  py={valuations.segment_swap_involution}}%
\begin{corollary}[Segment swap at a failure pair lands in an $\varepsilon$-candidate]%
\label{cor:swap-at-failure-fires}
If rehearsal on an $\varepsilon$-candidate casting~$\casting$ halts at the
failure pair~$(I, J)$, then the segment swap of $\casting$ at~$(I, J)$
is itself an $\varepsilon$-candidate casting.
\end{corollary}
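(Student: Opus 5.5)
This corollary should follow directly from the swap criterion (\Cref{lem:swap-criterion}). The plan is to check that a failure pair reported by rehearsal satisfies that lemma's hypotheses, and that a spurious crossing is exactly the case the criterion admits.

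First, unpack what rehearsal reports when it halts. By step~\ref{itm:s-loop}, the failure pair $(I,J)$ is produced at a vertex~$v$ where two or more $\sigma$-active representatives meet. Rehearsal takes the leftmost adjacent pair of active intervals $I^- \ilo I^+$ of $P_\sigma$ in the contiguous block reaching~$v$; such a block exists by consecutivity (\Cref{sec:proof-adjacent}). The pair has junction~$g$, and the representatives are $I = \sigma^{-1}(I^-)$ and $J = \sigma^{-1}(I^+)$. Their paths share the vertex~$v$, so they cross. Before halting, the run has only performed valid coalescences, so \Cref{lem:sigma-le-pi} gives $\sigma \leq \pi$. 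The running assignment is diagonal because it is built by binary merges from the identity assignment. This is precisely the setting \enquote{at each iteration of rehearsal} in which \Cref{lem:swap-criterion} is stated. Its proof uses exactly two facts: that $I$ and $J$ lie on opposite sides of~$g$ inside $I^- \cup I^+$, and the pinning of interior ghosts via $\sigma \leq \pi$.

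Second, apply the criterion. Rehearsal halts only when the test $g \in \{\pi(I), \pi(J)\}$ fails, that is, when the crossing is spurious (\Cref{def:valid-spurious}). Then $g \notin \{\pi(I),\pi(J)\}$, and \Cref{lem:swap-criterion} says the segment-swapped bijection $(I\;J)\circ\pi$ is again an $\varepsilon$-candidate. The swapped path family is well formed: $P_I'$ runs from $x_I$ to $y_{\pi(J)} = y_{\pi'(I)}$, and symmetrically for~$J$. Hence the swapped object is a casting in the sense of \Cref{def:casting}, and it is an $\varepsilon$-candidate casting.

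I expect the only genuine subtlety to be the choice of swap vertex. Segment swap as defined exchanges suffixes at the \emph{first} crossing of $P_I$ and $P_J$, whereas rehearsal reports the vertex~$v$. The $\varepsilon$-candidacy conclusion depends only on the bijection $\pi' = (I\;J)\circ\pi$, not on where the suffixes are cut, so the corollary holds either way. I would still check that $v$ is the first common vertex of $P_I$ and $P_J$ in the order~$\le$. The argument: any earlier common vertex would already have been met by the sweep while $I$ and $J$ were both $\sigma$-active, because representatives only lose activity at coalescences. That earlier meeting would have been processed as a crossing, either merging their intervals, which contradicts $I$ and $J$ still representing distinct intervals, or halting the run earlier. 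One wrinkle is that an earlier meeting might have involved different, non-adjacent intervals; consecutivity handles this by forcing the intermediate representatives in as well. This consistency matters for the later involution rather than for the present statement, so I would record it as a remark and keep the proof of the corollary itself to the two steps above.
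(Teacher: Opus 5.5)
Your proposal is correct and matches the paper's proof, which is exactly the one-line application of \Cref{lem:swap-criterion}: at a spurious crossing $g \notin \{\pi(I), \pi(J)\}$, so the swap remains an $\varepsilon$-candidate. Your side remark that the failure vertex is the first crossing of $P_I$ and $P_J$ is the argument the paper places in the proof of \Cref{thm:involution}. You are right that this statement does not need it.
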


\begin{proof}
At a spurious crossing $g \notin \{\pi(I), \pi(J)\}$, so the swap
remains an $\varepsilon$-candidate by \Cref{lem:swap-criterion}.
\end{proof}

\subsection{The involution}
\label{sec:proof-global-involution}

Combining rehearsal (the classifier) with segment swap (the
pairing mechanism) defines the involution on
$\varepsilon$-candidate castings.

\fnote{%
  prose={The involution $\iota$: rehearse, then segment-swap at the failure
    pair.},
  lean={Coalescence.segmentSwapAtFailure},
  py={valuations.segment_swap_involution}}%
\begin{definition}[The involution $\iota$]
\label{def:global-involution}
For each $\varepsilon$-candidate casting~$\casting$, set
\[
  \iota(\casting) =
    \begin{cases}
      \casting
        & \text{if rehearsal on $\casting$ succeeds,} \\[2ex]
      \text{segment swap of $\casting$ at $(I, J)$}
        & \text{if rehearsal on $\casting$} \\
        & \qquad\text{halts at the
                failure pair $(I, J)$.}
    \end{cases}
\]
\end{definition}

\fnote{%
  prose={Failed castings cancel pairwise under $\iota$; the successful
    castings are its fixed points (layer~1).},
  lean={Coalescence.MainTheorem.main_theorem_layer1, Coalescence.segmentSwapAtFailure},
  py={valuations.segment_swap_involution}}%
\begin{theorem}[Successful castings are the fixed points of $\iota$]\label{thm:involution}
The map $\iota$ is a weight-preserving, sign-reversing
involution on $\varepsilon$-candidate castings. Its fixed points are
exactly the successful castings.
\end{theorem}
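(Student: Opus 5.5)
Well-definedness and the weight and sign statements go first; the real content is that $\iota$ is an involution. By \Cref{cor:swap-at-failure-fires}, the segment swap of a failed $\varepsilon$-candidate casting at its failure pair is again an $\varepsilon$-candidate, so $\iota$ maps $\varepsilon$-candidate castings to $\varepsilon$-candidate castings. On failed castings, \Cref{lem:swap-properties} gives weight preservation and the sign flip $\gasgnof{(I\;J)\circ\pi}=-\gasgnof{\pi}$; on successful castings both properties hold trivially. One clarification is needed here. The swap is performed at the crossing vertex~$v$ where rehearsal halted, which need not be the globally first common vertex of $P_I$ and $P_J$. The swap of \Cref{lem:swap-properties} is therefore read as ``exchange suffixes at~$v$'', and the three properties in that lemma hold verbatim for any common vertex.

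The fixed-point statement then reduces to showing that failed castings are never fixed. If rehearsal fails at $(I,J)$, the two representatives $I\ne J$ have distinct aspirations, so $\pi'=(I\;J)\circ\pi\ne\pi$ and $\iota(\casting)\ne\casting$. Fixed points are therefore exactly the successful castings, provided $\iota$ is an involution.

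The main step, and the expected obstacle, is $\iota(\iota(\casting))=\casting$ for a failed $\casting$ with swap $\casting'$. I will show that rehearsal on $\casting'$ fails at the same vertex $v$, with the same pair $(I,J)$ in the same roles; swapping again at $v$ then restores $\casting$ by \Cref{lem:swap-properties}\ref{itm:swap-involution}. I would run the two rehearsals in parallel along the linear order~$\le$ and prove by induction that they hold the same reality $\sigma_t$ at every vertex $t<v$, and at $v$ up to the moment the pair $I^-,I^+$ is examined.

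Three facts drive the induction. First, every path other than $P_I,P_J$ is untouched. Second, $P_I'$ and $P_J'$ agree with $P_I,P_J$ at all vertices before $v$, because in a DAG every vertex visited before $v$ on a path precedes $v$ in $\lessdot$, hence in~$\le$. Third, validity tests before $v$ consult $\pi$ only through $\pi^{-1}(h)$ for ghosts $h$ fired so far, and these performers are distinct from the surviving representatives $I,J$. Since $\{\pi(I),\pi(J)\}=\{\pi'(I),\pi'(J)\}$, the same holds for $\pi'$, and validity status is unchanged.

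Care is needed at $v$ itself, where several crossings of a contiguous block may be processed in label order before reaching $(I^-,I^+)$. The earlier pairs in the block involve representatives other than $I,J$ (or at most $I$ paired with an interval to its left), and their tests depend only on $\pi$ at ghosts distinct from $g$, together with the unchanged set of actors present at $v$. So they pass or fail identically, and since the first failure is at $(I^-,I^+)$ for $\casting$, the same holds for $\casting'$. At $(I^-,I^+)$ the test $g\in\{\pi'(I),\pi'(J)\}=\{\pi(I),\pi(J)\}$ fails again, so the failure pair of $\casting'$ is $(I,J)$ at $v$. The subtle point to verify is that the representatives $I\in I^-$ and $J\in I^+$ are unchanged; this holds because $\sigma$ is identical, so $\sigma^{-1}(I^\pm)$ are identical.
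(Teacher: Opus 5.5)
Your proof is correct and follows essentially the same route as the paper. Well-definedness comes from \Cref{cor:swap-at-failure-fires} and weight and sign from \Cref{lem:swap-properties}. Involutivity comes from running rehearsal on the swapped casting in parallel: every test before the failing pair involves a consumed performer outside $\{I,J\}$, and the test at $v$ still fails because $\{\pi(I),\pi(J)\}$ is unchanged. Your explicit check that $\pi'\neq\pi$, so failed castings are never fixed, is a small welcome addition.

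Your caveat about $v$ is unnecessary, however. $v$ always is the first common vertex of $P_I$ and $P_J$: at any earlier common vertex both actors were already $\sigma$-active, so the rehearsal loop there would have either halted or consumed one of them. The paper records exactly this, so the swap at $v$ is literally the segment swap of \Cref{sec:proof-swap} and no reinterpretation of $\iota$ is needed.
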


\begin{proof}
\emph{Well-defined on $\varepsilon$-candidates.} If rehearsal succeeds
on~$\casting$ then $\iota(\casting) = \casting$ is $\varepsilon$-candidate.
If rehearsal halts at $(I, J)$, the segment swap at $(I, J)$ lands
in an $\varepsilon$-candidate (\Cref{cor:swap-at-failure-fires}).

\medskip\noindent\emph{Fixed points.} By construction,
$\iota(\casting) = \casting$ iff rehearsal succeeds
on~$\casting$, iff $\casting$ is successful
(\Cref{def:successful-casting}).

\medskip\noindent\emph{Involution.} We treat the two branches
of $\iota$'s definition separately. On a successful casting,
$\iota$ is the identity, so $\iota^2 = \id$ trivially.

On a failed casting~$\casting$ with failure pair $(I, J)$ at
vertex~$v$ and junction~$g$, set
$\casting'$ to be the segment swap of $\casting$ at $(I, J)$. Here the
failure vertex~$v$ is the \emph{first} crossing of $P_I$ and~$P_J$, so
the segment swap at~$(I, J)$ (\Cref{sec:proof-swap}) acts exactly
at~$v$: since $I$ and~$J$ are both $\sigma$-active when rehearsal
reaches~$v$, any earlier crossing of their paths would---by
consecutivity (\Cref{sec:proof-adjacent}) and rehearsal's per-vertex
processing---already have fired a coalescence consuming one of $I, J$ or
halted rehearsal spuriously before~$v$, contradicting that both
reach~$v$ active. We must show that
$\iota(\casting') = \casting$. We claim that rehearsal on~$\casting'$
also halts at~$v$ with failure pair~$(I, J)$; then
$\iota(\casting')$ is the segment swap of $\casting'$ at $(I, J)$,
which is $\casting$ by involutivity of segment
swap (\Cref{lem:swap-properties}\ref{itm:swap-involution}).

\medskip\noindent\emph{Locality of the swap.} Segment swap at~$(I, J)$
modifies only the suffixes of $P_I$ and $P_J$ after~$v$;
every walk prefix up to and including~$v$, and every other
walk, is identical in $\casting$ and $\casting'$. Rehearsal's crossings
strictly before~$v$ therefore appear in the same chronological order on
$\casting$ and
$\casting'$ with identical geometry. Through every valid
crossing before~$v$, the same pair of active intervals
merges at the same junction, the same actor is consumed as
a ghost performer, and reality~$\sigma$ advances identically, so the
reality $\sigma$ and the partial performance built so far when rehearsal
reaches~$v$ are the same on $\casting$ and $\casting'$. The same applies
to any valid crossings processed at~$v$ itself before the pair $(I, J)$:
their consumed performers are no longer active, hence are neither $I$
nor~$J$, so the swap alters neither their tests nor their outcomes.

\medskip\noindent\emph{Invariance of the test at~$v$.} At~$v$ the paths of
$I$ and $J$ still cross. The validity test asks whether
$g \in \{\pi(I), \pi(J)\}$. The swap transposes
$\pi$ at $I$ and~$J$, exchanging $\pi(I)$ and
$\pi(J)$; the unordered pair
$\{\pi(I), \pi(J)\}$ is unchanged. The test
still fails, so rehearsal on~$\casting'$ halts at the same
vertex~$v$ with failure pair~$(I, J)$.

\medskip\noindent\emph{Properties.} Weight-preservation and
sign-reversal follow from \Cref{lem:swap-properties}.
\end{proof}

\section{Proof of the coalescence formula}\label{sec:proof-main}

With the bijection between performances and successful castings
and the involution~$\iota$ in place, two pieces remain:
the sign identity, showing that every successful casting is
\emph{positive} for the ghost-adjusted sign, and the final assembly, which
reads $\det M$ off the restricted Leibniz expansion
(\Cref{prop:restricted-leibniz}) and cancels the failed castings against
one another.

\subsection{The sign identity}
\label{sec:proof-sign}

\fnote{%
  prose={Successful castings all carry ghost-adjusted sign $+1$.},
  lean={Coalescence.MainTheorem.main_theorem_layer1}}%
\begin{proposition}[Successful castings are positive]\label{prop:sign-identity}
For any successful casting $\casting$, the ghost-adjusted sign is
\[
\gasgnof{\pi} = +1
\qquad\text{(equivalently, } \sgn \pi = \prod_{g \in \Ghosts}
\varepsilon_g \text{).}
\]
\end{proposition}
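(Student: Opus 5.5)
The plan is to route the claim through the performance--casting bijection rather than recompute any inversion count directly. Let $\casting = (\pi, \paths)$ be a successful casting. By \Cref{prop:performance-casting-bijection}, the performance $\perf = \Rehearsal(\casting)$ exists, has the prescribed final state (\Cref{prop:rehearsal-boundary}), and satisfies $\Attribution(\perf) = \casting$ by \Cref{lem:attr-reh}. In particular the bijection $\pi$ carried by $\casting$ is exactly the bijection produced by attribution from $\perf$. \Cref{prop:attribution-positive} then gives $\gasgnof{\pi} = +1$ at once. The equivalent form $\sgn \pi = \prod_{g} \varepsilon_g$ follows from \Cref{lem:gasgn-eval}, since $\prod_g \varepsilon_g \in \{\pm 1\}$ is its own inverse.

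The substantive content is therefore already contained in two earlier results. The first is the parity lemma, \Cref{lem:coalescence-inversion-parity}. It shows that each step of the attribution chain preserves the parity of $N$, starting from $N = 0$ at the identity assignment. The second is the identification of successful castings with attribution outputs. I would state explicitly that the chain along which attribution runs on $\perf$ is the same running assignment $\sigma_t$ that rehearsal produced on $\casting$; this is the content of \Cref{lem:attr-reh}. This makes clear that the parity argument applies to $\pi$ itself and not merely to some casting with the same endpoints.

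A direct alternative avoids invoking the hard coupling lemma. Rehearsal on $\casting$ builds an increasing chain of diagonal assignments from the identity, one binary coalescence per valid crossing. By \Cref{cor:rehearsal-reaches-aspiration} it terminates at $\sigma = \pi$. I would check that each accepted step has the same shape as in \Cref{lem:coalescence-inversion-parity}: two adjacent active intervals $L, R$ merge into $H$ and create ghost $g$, and the new images of the two representatives $\{p_L, p_R\}$ are $\{H, g\}$. The only point that needs care is the pair $(p_L, p_R)$ itself, where the parity lemma used the far-side principle. Here the ghost performer is $\pi^{-1}(g)$, and $\varepsilon$-candidacy (\Cref{def:candidate}) places it on the side dictated by $\varepsilon_g$. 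This is exactly the far-side gluing, so the pair contributes no change, and the identity \eqref{eq:parity-identity} handles all other actors unchanged. Thus $N(\pi)$ is even.

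I expect no real obstacle. The only subtle step is confirming that rehearsal's gluing coincides with the far-side gluing, so that the parity lemma applies verbatim. That agreement is precisely the $\varepsilon$-candidacy argument already used in \Cref{lem:attr-reh}, and I would cite it there.
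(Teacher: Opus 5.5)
Your main argument is the same as the paper's. A successful casting equals $\Attribution(\Rehearsal(\casting))$ by the $\Attribution \circ \Rehearsal = \id$ half of \Cref{prop:performance-casting-bijection}, so \Cref{prop:attribution-positive} applies directly, and \Cref{lem:gasgn-eval} gives the equivalent form.

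Your alternative is also sound. It runs \Cref{lem:coalescence-inversion-parity} along rehearsal's own chain, with $\varepsilon$-candidacy supplying the far-side gluing at each valid crossing and \Cref{cor:rehearsal-reaches-aspiration} identifying the terminal assignment with $\pi$. This bypasses the coupling lemma, though it still relies on assignment rigidity, just as the paper's route does.
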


\begin{proof}
Every successful casting is the attribution of a performance---the
surjectivity of attribution, the $\Attribution \circ \Rehearsal =
\id$ direction of \Cref{prop:performance-casting-bijection}---so
$\gasgnof{\pi} = +1$ by \Cref{prop:attribution-positive}.
\end{proof}

So attribution lands among the positive castings: each successful
casting contributes exactly $+1 \cdot w(\paths)$ to the determinant.

\subsection{Completing the proof}
\label{sec:proof-completion}

\begin{proof}[Proof of \Cref{thm:coalescence}]
The restricted Leibniz expansion (\Cref{prop:restricted-leibniz}) writes
the determinant as a signed sum over $\varepsilon$-candidate castings,
\[
\det M = \sum_{\substack{(\pi, \paths) \\ \pi \in \Candidates}}
         \gasgnof{\pi} \, w(\paths).
\]
The involution $\iota$ (\Cref{thm:involution}) partitions the
$\varepsilon$-candidate castings into its fixed points---the successful
castings---and two-element orbits of failed castings. Segment swap
preserves the weight and negates the ghost-adjusted sign
(\Cref{lem:swap-properties}), so the two castings of each failed orbit
cancel. Every surviving fixed point is a successful casting, which is
positive by \Cref{prop:sign-identity}; hence
\[
\det M
= \sum_{\text{successful castings}} \gasgnof{\pi}\, w(\paths)
= \sum_{\text{successful castings}} w(\paths).
\]
Finally, the weight-preserving bijection between successful castings
and performances
(\Cref{prop:performance-casting-bijection}) identifies the right-hand
side with the total weight~$Z$. Therefore $Z = \det M$.
\end{proof}

\subsection{Example: full coalescence of three particles}
\label{sec:proof-example}

We return to \Cref{ex:attribution-n3}: three particles $I_1, I_2, I_3$
coalesce into one heir $H = [1,4)$,
with $\varepsilon_2 = +1$ and $\varepsilon_3 = -1$.
The $\varepsilon$-candidacy condition requires:
\begin{itemize}
\item $\pi^{-1}(2) \igo 2$, so $\pi^{-1}(2) \in \{I_2, I_3\}$;
\item $\pi^{-1}(3) \ilo 3$, so $\pi^{-1}(3) \in \{I_1, I_2\}$.
\end{itemize}

Three bijections satisfy these constraints:
\begin{align*}
\pi_1 &\colon I_1 \mapsto H,\; I_2 \mapsto 3,\; I_3 \mapsto 2, \\
\pi_2 &\colon I_1 \mapsto 3,\; I_2 \mapsto 2,\; I_3 \mapsto H, \\
\pi_3 &\colon I_1 \mapsto 3,\; I_2 \mapsto H,\; I_3 \mapsto 2.
\end{align*}

\subsubsection{Which candidates come from a performance}

The bijections $\pi_1$ and $\pi_2$ each arise from a coalescence order
whose attribution produces it---the two orders
worked out in \Cref{ex:attribution-n3} (\Cref{fig:n3-pi1,fig:n3-pi2}),
in which junction~$3$ fires first for $\pi_1$ and junction~$2$ first for
$\pi_2$. Both are positive, $\gasgnof{\pi_1} = \gasgnof{\pi_2} = +1$, as
guaranteed by \Cref{prop:sign-identity}: here
$\sgn \pi_1 = \sgn \pi_2 = -1$ and
$\prod_g \varepsilon_g = \varepsilon_2\varepsilon_3 = -1$ multiply
to $+1$.

The bijection $\pi_3$, by contrast, arises from no coalescence order: it
satisfies the $\varepsilon$-candidacy constraint but admits no successful casting
(\Cref{fig:unrealizable-bijection}). The middle particle $I_2$ must
coalesce with a neighbor before reaching the final time. If $I_2$
coalesces first with $I_1$, then ghost~$2$ is created---but the
ghost must be $I_1$ or $I_2$, not $I_3$ as $\pi_3$ requires.
If $I_2$ coalesces first with $I_3$, then ghost~$3$ is
created---but the ghost
must be $I_2$ or $I_3$, not $I_1$ as $\pi_3$ requires.

\begin{figure}[t]
\centering

\subfloat[]{
\begin{tikzpicture}[scale=1.0]

\def\width{5.5}
\def\height{3}

\tikzset{
    styleA/.style={line width=2pt},
    styleB/.style={double, double distance=2pt, line width=0.6pt},
    styleC/.style={line width=1.5pt, decorate, decoration={zigzag, amplitude=1pt, segment length=4pt}},
}

\draw[gray, thick] (-\width/2, 0) -- (\width/2, 0);
\draw[gray, thick] (-\width/2, \height) -- (\width/2, \height);
\node[right] at (\width/2, 0) {$\mathcal{X}$};
\node[right] at (\width/2, \height) {$\mathcal{Y}$};

\pgfmathsetmacro{\xA}{-1.8}
\pgfmathsetmacro{\xB}{0}
\pgfmathsetmacro{\xC}{1.8}
\pgfmathsetmacro{\xGtwo}{-1.5}
\pgfmathsetmacro{\xS}{0}
\pgfmathsetmacro{\xGthree}{1.5}

\coordinate (I1) at (\xA, 0);
\coordinate (I2) at (\xB, 0);
\coordinate (I3) at (\xC, 0);

\coordinate (g2) at (\xGtwo, \height);
\coordinate (heir) at (\xS, \height);
\coordinate (g3) at (\xGthree, \height);

\draw[styleA, colA] (I1) -- (g3);
\draw[styleB, colB] (I2) -- (heir);
\draw[styleC, colC] (I3) -- (g2);

\node[circle, fill=black, inner sep=2pt] at (I1) {};
\node[circle, fill=black, inner sep=2pt] at (I2) {};
\node[circle, fill=black, inner sep=2pt] at (I3) {};

\node[circle, fill=black, inner sep=2.5pt] at (heir) {};
\node[circle, draw=black, fill=white, line width=1pt, inner sep=1.5pt] at (g2) {};
\node[circle, draw=black, fill=white, line width=1pt, inner sep=1.5pt] at (g3) {};

\node[below=0.1cm] at (I1) {$I_1$};
\node[below=0.1cm] at (I2) {$I_2$};
\node[below=0.1cm] at (I3) {$I_3$};
\node[above=0.1cm] at (g2) {$2$};
\node[above=0.1cm] at (heir) {$H$};
\node[above=0.1cm] at (g3) {$3$};

\node[font=\fontsize{24}{24}\selectfont\bfseries, text=red] at (\width/2 - 0.2, \height/2) {!};

\end{tikzpicture}
\label{fig:unrealizable-bijection-a}
}
\hfill
\subfloat[]{
\begin{tikzpicture}[scale=0.8]

\def\axislen{6}
\def\height{3.75}
\def\rectheight{0.4}
\def\spacing{1.3}

\draw[->, thick] (-0.3, 0) -- (\axislen, 0);
\foreach \x in {1,2,3,4} {
    \draw (\spacing*\x, -0.1) -- (\spacing*\x, 0.1);
    \node[below] at (\spacing*\x, -0.2) {\small $\x$};
}

\draw[->, thick] (-0.3, \height) -- (\axislen, \height);
\foreach \x in {1,2,3,4} {
    \draw (\spacing*\x, \height-0.1) -- (\spacing*\x, \height+0.1);
    \node[above] at (\spacing*\x, \height+0.2) {\small $\x$};
}

\draw[fill=colA!30, draw=colA, thick, rounded corners=3pt]
    (\spacing*1+0.07, 0.15) rectangle (\spacing*2-0.07, 0.15+2*\rectheight);
\node[circle, fill=colA, inner sep=1.5pt] (I1circ) at (\spacing*1+0.2, 0.15+\rectheight) {};
\node at (\spacing*1.5, 0.15+\rectheight) {\small $I_1$};

\draw[fill=colB!30, draw=colB, thick, rounded corners=3pt]
    (\spacing*2+0.07, 0.15) rectangle (\spacing*3-0.07, 0.15+2*\rectheight);
\node[circle, fill=colB, inner sep=1.5pt] (I2circ) at (\spacing*2+0.2, 0.15+\rectheight) {};
\node at (\spacing*2.5, 0.15+\rectheight) {\small $I_2$};

\draw[fill=colC!30, draw=colC, thick, rounded corners=3pt]
    (\spacing*3+0.07, 0.15) rectangle (\spacing*4-0.07, 0.15+2*\rectheight);
\node[circle, fill=colC, inner sep=1.5pt] (I3circ) at (\spacing*3+0.2, 0.15+\rectheight) {};
\node at (\spacing*3.5, 0.15+\rectheight) {\small $I_3$};

\draw[fill=colP!15, draw=colP, thick, rounded corners=3pt]
    (\spacing*1+0.07, \height-0.15-2*\rectheight) rectangle (\spacing*4-0.07, \height-0.15);
\node[circle, fill=colP, inner sep=1.5pt] (Hcirc) at (\spacing*1+0.3, \height-0.15-\rectheight) {};
\node at (\spacing*2.5, \height-0.15-\rectheight) {$H$};

\node[circle, draw=colC, fill=white, line width=1.5pt, inner sep=2pt] (g2circ)
    at (\spacing*2, \height-0.15-2*\rectheight-0.35) {};
\node[right] at (\spacing*2+0.15, \height-0.15-2*\rectheight-0.35) {\small $2$};

\node[circle, draw=colA, fill=white, line width=1.5pt, inner sep=2pt] (g3circ)
    at (\spacing*3, \height-0.15-2*\rectheight-0.35) {};
\node[right] at (\spacing*3+0.15, \height-0.15-2*\rectheight-0.35) {\small $3$};

\draw[colA, thick, dashed, ->] (I1circ) to[out=90, in=-90] (g3circ);
\draw[colB, thick, ->] (I2circ) to[out=90, in=-90] (Hcirc);
\draw[colC, thick, dashed, ->] (I3circ) to[out=90, in=-90] (g2circ);

\node[font=\fontsize{20}{20}\selectfont\bfseries, text=red]
    at (\axislen + 0.5, \height/2) {\textsf{X}};

\end{tikzpicture}
\label{fig:unrealizable-bijection-b}
}

\caption{\textbf{A candidate with no successful casting: $\pi_3$.}
The bijection $\pi_3$: $I_1 \mapsto 3$, $I_2 \mapsto H$, $I_3 \mapsto 2$
satisfies the candidacy condition, yet every one of its castings fails.
(a)~Any path family has a spurious crossing---the middle particle $I_2$
must collide before reaching $H$.
(b)~Bijection $\pi_3$, with $\gasgn \pi_3 = -1$; its castings all fail
and cancel under the involution. Compare with
\Cref{fig:n3-pi1,fig:n3-pi2}.}
\label{fig:unrealizable-bijection}
\end{figure}

\subsubsection{The involution pairs failed castings}

Every $\pi_3$-casting fails at its first crossing (either
$P_1 \cap P_2$ or $P_2 \cap P_3$). The segment swap at this crossing
converts $\pi_3$-castings into $\pi_1$- or $\pi_2$-castings with
wrong crossing order. Conversely, $\pi_1$- and $\pi_2$-castings with
wrong crossing order swap back to $\pi_3$-castings. The pairings
preserve weight and flip sign, so all failed castings cancel. Only
correctly-ordered $\pi_1$- and $\pi_2$-castings survive---exactly
those arising from performances.

\section{Continuous processes}\label{sec:continuous}

The discrete framework extends to continuous time and space. The
determinant identity of \Cref{thm:coalescence} was proved for
discrete-time, discrete-space walks; we now show that the same identity
holds for any Markov process satisfying the Karlin--McGregor
assumptions---in particular Brownian motion---by reading the discrete
proof as a statement about measures rather than counts
(\Cref{thm:continuous}). The combinatorial core ($\varepsilon$-candidates, segment
swap, the sign-reversing involution) is untouched; only the bookkeeping
of outcomes changes from summation to integration.

The dictionary with the discrete proof is direct. In the discrete proof a
\emph{spacetime point} is a vertex $(x,t)$---space horizontal, time
upward (\Cref{fig:intro-21})---and the sign-reversing involution runs along
$\le$, which on these vertices $(x,t)$ we realize as the lexicographic order
(a linear extension of the time order of \Cref{def:spacetime-graph}),
comparing the time coordinate~$t$ first. Here the spacetime points are arbitrary $(x,t)$
in space-time, and the same order serves unchanged. The one combinatorial
subtlety---several particles meeting at a single spacetime point---is
handled exactly as in the discrete construction, by resolving the multiple
meeting into a sequence of binary collisions (the construction below).

\subsection{Karlin--McGregor assumptions}
\label{sec:continuous-km}

Let $\Omega$ denote the probability space for $n$ non-interacting
particles with initial positions $x_1 \leq \cdots \leq x_n$. Each
$\omega \in \Omega$ specifies trajectories up to time $T$, with
$\mathbf{X}_T(\omega) = (X_T^1(\omega), \ldots, X_T^n(\omega))$
denoting the final positions.

The \emph{Karlin--McGregor assumptions}~\cite{KM1959} are:
\begin{enumerate}[label=\textup{(KM\arabic*)}, ref=\textup{KM\arabic*},
                  leftmargin=*]
\item \label{itm:km-first}\label{itm:km-markov}\emph{Strong Markov property}: the
  $n$-particle system $(X^1, \ldots, X^n)$ is strong Markov---for
  any stopping time $\tau$ of its joint filtration, the post-$\tau$
  system is conditionally independent of the pre-$\tau$ system given
  the state at~$\tau$;
\item \label{itm:km-identical}\emph{Identical, independent dynamics}:
the particles are independent ($\PP$ is the product law), each
following the same Markov process (differing only in initial
position);
\item \label{itm:km-order}\emph{Order preservation}: adjacent particles
cannot change their relative order without first occupying the same
state;
\item \label{itm:km-last}\label{itm:km-meeting}\emph{Meeting times are stopping times}: for
  particles $I < J$, the first meeting time
  $\tau_{I,J} = \inf\{t : X^I_t = X^J_t\}$ is a stopping time.
\end{enumerate}
Identical, independent dynamics~(\ref{itm:km-identical}) ensures that
when two particles meet, their post-meeting trajectories are
exchangeable (conditioned on the meeting state): this is the
measure-theoretic analog of weight-preserving segment swap. For
independent copies the joint strong Markov
property~(\ref{itm:km-markov}) is not automatic; it holds under mild
regularity---continuous paths and a jointly continuous transition
function~\cite[Section~6, Theorem~2]{KM1959}.
These hold for Brownian motion and other continuous-path
diffusions, as well as for skip-free birth-death chains. For
discrete-time $\pm 1$ walks on~$\ZZ$, order preservation
requires that all particles share the same parity (as enforced
by the checkerboard lattice of \Cref{ex:main-examples}).

\subsection{The finite coalescing system}
\label{sec:continuous-coalescing}

The formula below (\Cref{thm:continuous}) is an identity for $\PP_{\mathrm{int}}$, the law of the
\emph{coalescing system}. Before stating it we must say what that system is:
for particles moving in continuous space and time the very meaning of
coalescence calls for a construction, since there is no smallest time step at
which to resolve a collision. With finitely many particles this construction
is elementary, and the ghost method makes it transparent.

\subsubsection{The construction}

Fix a priority order on the $n$ particles and run the $n$ trajectories of
$\Omega$, the non-interacting system of \Cref{sec:continuous-km}. At the
first instant at which two particles occupy a common state, an heir and a
ghost emerge: the higher-priority particle takes the \emph{heir} role and
carries the merged
group forward along its own trajectory, while the lower-priority particle
takes the \emph{ghost} role. Order preservation~(\ref{itm:km-order}) forces
each collision to be between two particles currently adjacent in the order, so
every collision creates exactly one ghost and retires one active particle;
hence at most $n-1$ collisions occur and the construction terminates. Should
three or more particles meet at a single instant, the meeting is resolved as a
sequence of binary collisions in priority order, exactly as a high-indegree
vertex is in the discrete construction (\Cref{sec:proof-high-indegree}). Meeting
times are stopping times~(\ref{itm:km-meeting}) and the strong Markov
property~(\ref{itm:km-markov}) makes the evolution after each collision a
fresh instance of the same dynamics, so the recursion is well defined---it is
the very stopping-time bookkeeping used in \Cref{sec:continuous-measurability} to make
the casting space measurable.

\subsubsection{Retention makes existence immediate}

The standard constructions \emph{truncate} the loser at each
collision, as in Arratia's construction for Brownian
motion~\cite{Arratia1979}; the ghost method instead \emph{keeps} the
truncated tail as a ghost running on along its own independent
trajectory, so no path is ever stopped. All $n$ trajectories of
$\Omega$ survive intact, and coalescence becomes a relabeling of which
coordinate is read as an heir. Consequently $\PP_{\mathrm{int}}$ is
the pushforward of the non-interacting law $\PP$ under this
deterministic relabeling, and its existence is immediate.

\subsubsection{Generality}

The construction invokes only the Karlin--McGregor
assumptions~(\ref{itm:km-first}--\ref{itm:km-last}), never any special
feature of Brownian motion, so it covers the whole class of
\Cref{sec:continuous-km}. The law of the coalescing system does not
depend on the chosen priority order: identical, independent
dynamics~(\ref{itm:km-identical}) makes the post-collision trajectories
exchangeable---for Brownian motion, exactly Arratia's observation that
the coalescing flow does not depend on the precedence
rule~\cite{Arratia1979}. Passing to \emph{infinitely} many particles
requires machinery---coming down from infinity, a topology on path
families---that the finite systems of this paper do not
need~\cite{EvansMorrisSen2013,TothWerner1998,FINR2004}.

\subsection{The continuous-time ghost formula}
\label{sec:continuous-formula}

\fnote{%
  prose={Not formalized: a continuous-time object (admissible sets of real
    final positions). The discrete counterpart is the $\varepsilon$-candidate
    set; see \Cref{sec:lean-scope}.}}%
\begin{definition}[$\varepsilon$-admissible sets]\label{def:eps-admissible-set}
Fix a coalescence pattern, hence the role set $\Roles$, and a sign
vector~$\varepsilon$. An \emph{$\varepsilon$-admissible set} is a measurable
set $A \subseteq \mathbb{R}^{\Roles}$ of final positions such that every
$(y_f)_{f \in \Roles} \in A$ satisfies:
\begin{itemize}
\item \emph{ghost--heir signs:} for each ghost~$g$ we require
  $y_g \preceq y_{\heir(g)}$ when $\varepsilon_g = +1$ and
  $y_{\heir(g)} \prec y_g$ when $\varepsilon_g = -1$;
\item \emph{heir order:} $y_H \preceq y_{H'}$ whenever $H \ilo H'$.
\end{itemize}
Here the weak inequality~$\preceq$ for $\varepsilon_g = +1$ versus the
strict~$\prec$ for $\varepsilon_g = -1$ assigns a tie
$y_g = y_{\heir(g)}$ to $\varepsilon_g = +1$, as permitted by
\Cref{def:ghost-sign}.
\end{definition}

The heir-order condition is no restriction: as in the discrete setting
(\Cref{sec:setup-final}), order preservation~(\ref{itm:km-order}) keeps the
surviving particles in this order, so a configuration violating it has
probability zero under the coalescing system.

\fnote{%
  prose={Not formalized: the continuous-time, probabilistic formula. The
    Lean development imports no measure theory and certifies the discrete
    identity this is obtained from by marginalization; see
    \Cref{sec:lean-scope}.}}%
\begin{theorem}[Continuous-time ghost formula]\label{thm:continuous}
Let the underlying process satisfy the Karlin--McGregor assumptions.
For any $\varepsilon$-admissible set $A$:
\[
  \PP_{\mathrm{int}}(\text{final positions} \in A) =
  \sum_{\pi \in \Pi_A} \gasgnof{\pi} \,
  \PP\bigl( \mathbf{X}_T \in A_\pi \bigr),
\]
where $\PP_{\mathrm{int}}$ denotes the probability for the
coalescing system, $\PP$ denotes the probability for
non-interacting particles, and
\[
  A_\pi = \bigl\{(y_{\pi(1)}, \ldots, y_{\pi(n)}) :
  (y_1, \ldots, y_n) \in A\bigr\}
\]
is the set $A$ with coordinates permuted by $\pi$.
\end{theorem}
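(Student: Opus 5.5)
Work on the non-interacting probability space $\Omega$ and transport the discrete proof pathwise. For each $\omega$ and each bijection $\pi\in\Pi_A$ (the $\varepsilon$-candidate bijections, as in \Cref{def:candidate}), read the $n$ trajectories as a continuous \emph{casting} $(\pi,\omega)$: actor $I$ is declared to end at role $\pi(I)$, and the casting is \emph{admissible} when the relabelled endpoint vector lies in $A$. This is exactly the event $\{\mathbf{X}_T\in A_\pi\}$, so the right-hand side is $\sum_\pi \gasgnof{\pi}\,\PP(\text{casting }(\pi,\omega)\text{ admissible})$. The plan has three steps: define rehearsal and segment swap pathwise; show that the failed castings cancel in measure; identify the successful ones with the coalescing law $\PP_{\mathrm{int}}$ restricted to $A$.

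\textbf{Step 1 (rehearsal and measurability).} Run rehearsal (\Cref{sec:rehearsal-algorithm}) in continuous time. The running assignment $\sigma$ changes only at the successive first meeting times of currently adjacent representatives. By (\ref{itm:km-meeting}) these are stopping times, and by (\ref{itm:km-order}) only adjacent active intervals can meet first, which is the continuous analogue of \Cref{sec:proof-adjacent}. Since there are at most $n-1$ events, the outcome (success, or failure with pair $(I,J)$, time $\tau$ and junction $g$) is a measurable function of $\omega$. Simultaneous multiple meetings are resolved into binary steps in label order, exactly as in \Cref{sec:proof-high-indegree}. All of the combinatorics transfers verbatim, since it uses only the sequence of meetings: \Cref{lem:sigma-le-pi}, the swap criterion \Cref{lem:swap-criterion}, and \Cref{cor:rehearsal-reaches-aspiration}. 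In the corollary, the role of (P1) is played by (\ref{itm:km-order}): paths that do not meet keep their order. So the only outcomes are success with $\sigma=\pi$, or spurious failure.

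\textbf{Step 2 (cancellation, the main obstacle).} Define $\Phi(\pi,\omega)=((I\,J)\circ\pi,\omega')$, where $\omega'$ exchanges the post-$\tau$ trajectories of $I$ and $J$ at their meeting time $\tau$. Pathwise, $\Phi$ is an involution, and by \Cref{thm:involution} it maps failed castings of $\pi$ to failed castings of $(I\,J)\circ\pi$. It is sign-reversing by \Cref{cor:gasgn-transposition}. It also preserves admissibility, because the swapped endpoint vector is the same configuration read through $(I\,J)\circ\pi$.

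The real work is showing that $\omega\mapsto\omega'$ preserves $\PP$; this replaces weight-preservation in \Cref{lem:swap-properties}. I will use the strong Markov property (\ref{itm:km-markov}) at the stopping time $\tau$, which here means the failure time of rehearsal. On $\{X^I_\tau=X^J_\tau\}$, the post-$\tau$ system given $\mathcal F_\tau$ is the product of independent copies of the same dynamics (\ref{itm:km-identical}), with $I$ and $J$ started from the same state. Exchanging their futures therefore leaves the conditional law invariant. The delicate point is that $\tau$, $I$ and $J$ are themselves defined through the pre-$\tau$ path and the fixed $\pi$, so I must check two things:

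\begin{itemize}
\item the failure event and its data are $\mathcal F_\tau$-measurable;
\item they are unchanged by the swap (the locality argument of \Cref{thm:involution}).
\end{itemize}

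Both should follow by decomposing according to the finite sequence of events and applying strong Markov at each one. Consequently the signed contributions of failed castings cancel, and the right-hand side reduces to $\sum_\pi \PP(\text{successful admissible casting }(\pi,\omega))$.

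\textbf{Step 3 (identification).} Fix the priority rule in the construction of \Cref{sec:continuous-coalescing} to be the far-side rule, with $\varepsilon_g$ read off the final positions. This choice is legitimate because the law of $\PP_{\mathrm{int}}$ does not depend on the priority order, and even on the event-dependent rule exchangeability at stopping times gives the same law. Attribution then sends $\omega$ to a unique candidate $\pi$, and by \Cref{lem:reh-attr,lem:attr-reh} (which are pathwise) this gives a bijection between the event $\{\text{coalescing final positions}\in A\}$ and the successful admissible castings summed over $\pi$. The signs of successful castings are all $+1$ by \Cref{prop:sign-identity}. Hence the right-hand side equals $\PP_{\mathrm{int}}(\text{final positions}\in A)$.
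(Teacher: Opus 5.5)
Your Steps~1 and~2 follow the paper's proof: the casting space $\Pi_A\times\Omega$, rehearsal as a finite chain of stopping times, and the swap at the first spurious crossing shown to be measure-preserving via the strong Markov property and exchangeability of the two post-meeting trajectories, followed by cancellation. Your attention to the $\mathcal F_\tau$-measurability of the failure data is, if anything, more explicit than the paper.

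The gap is in Step~3. You build the coalescing system with the far-side rule ``with $\varepsilon_g$ read off the final positions'' and claim that exchangeability at stopping times still gives $\PP_{\mathrm{int}}$. That rule is anticipating. The decision at a collision time $\tau$ depends on where the ghost and the heir sit at time $T$, which is not $\mathcal F_\tau$-measurable, so the strong Markov property says nothing about it. Worse, the rule is not well defined. Take two particles that meet:
\begin{itemize}
\item If $X^1_T<X^2_T$: making particle~$2$ the ghost puts it right of the heir, so $\varepsilon_g=-1$ and the rule demands particle~$1$; making particle~$1$ the ghost gives $\varepsilon_g=+1$ and the rule demands particle~$2$. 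No choice is consistent.
\item If $X^1_T>X^2_T$: both choices are consistent.
\end{itemize}
So ``attribution sends $\omega$ to a unique candidate $\pi$'' fails, and the pathwise bijection with $\{\text{coalescing final positions}\in A\}$ does not exist in the form you state. Note also that for a fixed priority order the equality $\mu(\CastingSpace_A^{\mathrm{succ}})=\PP_{\mathrm{int}}(\text{final positions}\in A)$ is in general only an identity of measures, not a pathwise one. For two particles and $\varepsilon_g=-1$ it is the reflection identity $\PP(\text{meet},X^1_T>X^2_T)=\PP(\text{meet},X^1_T<X^2_T)$.

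The repair is to use the sign vector that the $\varepsilon$-admissible set $A$ fixes in advance. At a collision dissolving a junction $g\in\Ghosts$, make the representative on the side prescribed by the \emph{fixed} $\varepsilon_g$ the ghost: the right one if $\varepsilon_g=+1$, the left one if $\varepsilon_g=-1$. This rule depends on the junction but is adapted, since its decision at each $\rho_i$ is $\mathcal F_{\rho_i}$-measurable. Your own swap-at-stopping-times argument, applied inductively at the $\rho_i$ on the $\mathcal F_{\rho_i}$-measurable events where this rule disagrees with a fixed priority order, then shows that it produces the same law $\PP_{\mathrm{int}}$.

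For this construction the pathwise claim is true. Whenever its final positions lie in $A$, the terminal assignment is the unique $\pi\in\Pi_A$ with $(\pi,\omega)$ successful. Conversely, $\varepsilon$-candidacy forces rehearsal on any successful $(\pi,\omega)$ to make exactly these choices. This is also the reading under which the paper's last step (``exactly one $\pi\in\Pi_A$'' with $(\pi,\omega)$ successful, together with the independence of $\PP_{\mathrm{int}}$ from the priority rule) is correct; the paper leaves it implicit. With this fix your proof coincides with the paper's.
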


The candidate set $\Pi_A$ (\Cref{def:candidate}) depends only on the
coalescence pattern and~$\varepsilon$, and the ghost-adjusted sign
$\gasgnof{\pi}$ (\Cref{eq:ghost-adjusted-sign}) only on~$\pi$
and~$\varepsilon$; neither depends on the positions in~$A$, so the
right-hand side is a well-defined finite signed sum.

\begin{remark}[Scope]\label{rem:continuous-scope}
The theorem is stated for a single $\varepsilon$-admissible set $A$---one
coalescence pattern together with one sign vector~$\varepsilon$. An
arbitrary event on the final positions is recovered by partitioning it
according to coalescence pattern and~$\varepsilon$ and summing, so this
one identity determines the entire law of the final state of the
coalescing system.
\end{remark}

The formula is an identity between measures, making no reference to
densities or mass functions; it is the starting point for the Brownian
specializations developed in the companion
papers~\cite{Sniady2026coalescenceApplications,Sniady2026pfaffian}.

\subsection{Reduction to the discrete proof}
\label{sec:continuous-proof}

The proof of \Cref{thm:continuous} occupies the rest of this section. The
whole argument is one idea: the continuous identity is the discrete
determinant identity of \Cref{thm:coalescence} applied cell by cell, with
the passage between the two carried by a reformulation of rehearsal.

\subsubsection{The lazy reformulation}

As stated in \Cref{sec:rehearsal-algorithm}, rehearsal is \emph{eager}:
it sweeps through every spacetime vertex in the linear order, acting
only at the rare crossings. Nothing in the run depends on the inert
steps, so rehearsal has an equivalent \emph{lazy} form: from the
current state, jump straight to the first vertex at which two active
representatives meet, act there, and repeat. A continuum of spacetime
offers no next vertex to enumerate, but the first meeting of two
active representatives is still a well-defined instant---a stopping
time, as verified below---so it is the lazy description that survives
the passage to continuous spacetime.

\subsubsection{Finitely many cells}

Read this way, rehearsal makes only finitely many decisions: one at each
\emph{coalescence} of two active representatives (simultaneous meetings of
three or more reduce to binary collisions, as above). 
Each valid
coalescence retires one active representative, so the count of active
particles strictly decreases and at most $n-1$ coalescences can occur
before the run ends. The potential 
continuity of the paths is deceptive but harmless
here: two trajectories may share a position infinitely often---near a
meeting time, Brownian paths do so almost surely---yet only the
\emph{first} meeting of two still-active representatives is a decision,
after which one of them is retired, so the remaining infinitely many
coincidences are invisible to rehearsal. What records the run is therefore
not the (possibly infinite) set of meetings but its \emph{combinatorial
type}: which adjacent pairs coalesce, in what order, and with which
valid/spurious verdicts, forgetting the actual times and positions.
Pairing the finite candidate set $\Pi_A$ with $\Omega$ and grouping by
combinatorial type partitions the casting space into \emph{finitely many}
combinatorial cells.

Their finiteness is structural: a combinatorial type is
an increasing chain in the assignment poset
(\Cref{prop:attribution-chain})---the reality assignment $\sigma$ 
climbs from the
identity at the bottom to $(P_\FinalState, \pi)$, one cover step per
coalescence---and a finite poset has only finitely many chains, however
wildly the paths oscillate within each cell. From the dual side the same
finiteness is even more immediate: attribution reads a coalescing
performance \emph{produced} with at most $n-1$ coalescence points---the
collision instants where the ghosts emerge
(\Cref{sec:continuous-coalescing})---so the finite skeleton sits in the
object itself, before any sweep reads it.

\subsubsection{The discrete identity, cell by cell}

On each combinatorial cell every combinatorial object is constant: the
candidate $\pi$, the time-evolution of the assignment $\sigma$ up to
reparametrization, and the ghost-adjusted sign $\gasgnof{\pi}$. The
discrete proof uses only these order-theoretic data, never the numerical
positions, so it applies to each cell verbatim and the determinant
identity holds there unchanged. The \emph{only} ingredient genuinely new
in continuous time is that segment swap is measure-preserving and so
permutes the combinatorial cells; this is the single place where the
strong Markov property and the exchangeability of post-meeting
trajectories enter.

\subsubsection{What remains to verify}

For the integrals it suffices to work with a coarser partition: the
first-spurious-crossing partition used below groups castings by which pair
of actors owns the first spurious crossing (or by success), and each of
its blocks is a union of combinatorial cells, so the two share the same
measurability. The remainder of the proof makes the reduction
precise and verifies it against the discrete argument of
\Cref{sec:proof-setup,sec:proof-attribution,sec:rehearsal,sec:proof-bijection,sec:involution,sec:proof-main}:
that those blocks are measurable, that the swap is measure-preserving and
sign-reversing, and that the successful castings account for
$\PP_{\mathrm{int}}(\text{final positions} \in A)$.

\subsection{The casting space and its partition}

\subsubsection{The casting space}

\fnote{%
  prose={Not formalized: the continuous-time casting space. The discrete
    castings are \texttt{Casting} (\Cref{def:casting}); see
    \Cref{sec:lean-scope}.}}%
\begin{definition}[Casting space]\label{def:casting-space}
For an $\varepsilon$-admissible set $A$, the \emph{casting space} is
\[
  \CastingSpace_A = \bigl\{ (\pi, \omega) \in \Pi_A \times \Omega :
    \mathbf{X}_T(\omega) \in A_\pi \bigr\}.
\]
A \emph{casting} $(\pi, \omega) \in \CastingSpace_A$ pairs an $\varepsilon$-candidate
bijection with $n$ trajectories $(P_I)_{I \in \Actors}$; its
\emph{ghost-adjusted sign} is $\gasgnof{\pi}$ (\Cref{eq:ghost-adjusted-sign}). The
casting space carries the
product measure $\mu$: counting measure on $\Pi_A$ times the probability
measure $\PP$ on $\Omega$.
\end{definition}

\subsubsection{Partition into blocks}

The involution $\iota: \CastingSpace_A \to \CastingSpace_A$ acts by segment swap
at the first spurious crossing. Partition the casting space according
to which pair of actors has the first spurious crossing:
\[
  \CastingSpace_A =
  \CastingSpace_A^{\mathrm{succ}} \sqcup
  \bigsqcup_{I < J} \CastingSpace_A^{(I,J)},
\]
where:
\begin{itemize}
\item $\CastingSpace_A^{\mathrm{succ}}$ consists of \emph{successful}
  castings (no spurious crossing); the involution acts as the identity;
\item $\CastingSpace_A^{(I,J)}$ consists of castings whose first spurious
  crossing involves actors $I$ and $J$; the involution is the
  segment swap of paths $P_I, P_J$.
\end{itemize}

\subsubsection{Measurability}
\label{sec:continuous-measurability}

The blocks must be measurable for the integrals below to make sense. Fix
an $\varepsilon$-candidate $\pi \in \Pi_A$ and run rehearsal on the casting
$(\pi, (P_I(\omega))_I)$; its decisions are stopping times of the
filtration $(\mathcal{F}_t)$ generated by the $n$ trajectories.

By order preservation~(\ref{itm:km-order}) two representatives reverse
order only by first meeting, so the first crossing is the earliest
meeting of two active representatives; by consecutivity
(\Cref{sec:proof-adjacent}) this earliest meeting is automatically
between \emph{adjacent} ones. At the start the adjacent pairs are fixed
by the initial order and~$\pi$, so $\rho_1 = \min \tau_{I,J}$, the
minimum of the meeting times~(\ref{itm:km-meeting}) over this finite,
deterministic set, is a stopping time. If two or more adjacent pairs meet at the
same instant (at distinct positions), the lexicographic
order~$\leq$---time first, then
position---selects a unique pair to process first; this keeps the first
crossing well defined in every setting, and for Brownian motion and
non-degenerate diffusions the tie is moreover almost surely vacuous.

The meeting pair at $\rho_1$ is $\mathcal{F}_{\rho_1}$-measurable, and
whether the crossing is valid or spurious---the condition
$g \in \{\pi(\sigma^{-1}(I^-)),\; \pi(\sigma^{-1}(I^+))\}$---depends only on $\pi$ and
that pair, hence is $\mathcal{F}_{\rho_1}$-measurable. If the crossing
is valid, rehearsal fires a ghost and the surviving representative
carries the merged interval onward along its own trajectory, restricted
to $[\rho_1, T]$. The record of which ghost fired is
$\mathcal{F}_{\rho_1}$-measurable, so the new set of adjacent active
pairs is $\mathcal{F}_{\rho_1}$-measurable; and by the strong Markov
property~(\ref{itm:km-markov}) the surviving representatives form a
post-$\rho_1$ system again satisfying~(\ref{itm:km-identical})
and~(\ref{itm:km-meeting}). Hence $\rho_2$, the earliest post-$\rho_1$
meeting over this random but $\mathcal{F}_{\rho_1}$-measurable finite
family of adjacent pairs, is again a stopping time, and likewise each
subsequent~$\rho_i$. Each valid crossing removes one active actor, so
the recursion halts after at most $n - 1$ steps, producing stopping
times $\rho_1 \leq \cdots \leq \rho_k$.

The first spurious crossing occurs at the first $\rho_i$ whose crossing
is spurious---itself a stopping time, being the first member of a finite
chain of stopping times at which an $\mathcal{F}_{\rho_i}$-measurable
event holds; if none is spurious, the run is successful. Hence the event
$\{\omega : (\pi,\omega) \in \CastingSpace_A^{(I,J)}\}$ (first spurious
crossing the pair $(I,J)$) and the event
$\{\omega : (\pi,\omega) \in \CastingSpace_A^{\mathrm{succ}}\}$ are
$\mathcal{F}_T$-measurable; as $\Pi_A$ is finite, the blocks of the
partition are measurable. The construction uses only the Karlin--McGregor
assumptions, so it covers every process in the theorem's scope; for
processes with jumps, where two or more adjacent pairs may meet simultaneously
with positive probability, the lexicographic order~$\leq$---not a
null-set argument---keeps each $\rho_i$ unambiguous.

\subsection{The measure-preserving involution}

\subsubsection{Measure-preservation of the swap}

The segment swap is measure-preserving on every block. On
$\CastingSpace_A^{(I,J)}$ the swap acts at the first spurious crossing
$\rho_\star$ (the first spurious~$\rho_i$ above), at which the two
representatives occupy the same position. Because the underlying
particles are independent---$\PP$ is their product law on~$\Omega$,
by~(\ref{itm:km-identical})---the
post-$\rho_\star$ trajectories of the two representatives are
conditionally independent of each other given that common state; the
strong Markov property~(\ref{itm:km-markov}) lets each restart afresh
from the meeting state, independently of the pre-$\rho_\star$ past, and
identical dynamics~(\ref{itm:km-identical}) makes them identically
distributed. In
particular, the joint law of the pair
$(P_I^{\mathrm{post}}, P_J^{\mathrm{post}})$ is exchangeable: swapping
the two post-meeting segments produces a pair with the same
distribution. It also maps $\CastingSpace_A$ into itself: where it acts as the
identity this is immediate, and on the block $\CastingSpace_A^{(I,J)}$
the first spurious crossing is $(I,J)$, so $g \notin \{\pi(I), \pi(J)\}$
and by the swap criterion
(\Cref{lem:swap-criterion,cor:swap-at-failure-fires}) the swapped
bijection $\pi' = (I\; J) \circ \pi$ is again an $\varepsilon$-candidate, with
$\mathbf{X}_T(\omega') \in A_{\pi'}$. Since the swap acts by
exchanging exactly these segments while leaving all other paths
unchanged, exchangeability together with this codomain-invariance shows
that it preserves the product measure~$\mu$.

\subsubsection{Cancellation of failed castings}

On the failed part $\CastingSpace_A \setminus \CastingSpace_A^{\mathrm{succ}}$,
the involution $\iota$ is sign-reversing: $\gasgnof{\iota(\pi, \omega)} =
-\gasgnof{\pi}$. This holds because the segment swap exchanges the roles of
actors $I$ and $J$, so the new bijection $\pi' = (I\; J) \circ \pi$
differs from $\pi$ by a transposition
(\Cref{lem:swap-properties}\ref{itm:swap-sign}).

Since $\iota$ is also measure-preserving:
\[
  \int_{\CastingSpace_A \setminus \CastingSpace_A^{\mathrm{succ}}} \gasgnof{\pi} \, d\mu
  = \int_{\CastingSpace_A \setminus \CastingSpace_A^{\mathrm{succ}}}
    \gasgnof{\iota(\pi, \omega)} \, d\mu
  = -\int_{\CastingSpace_A \setminus \CastingSpace_A^{\mathrm{succ}}} \gasgnof{\pi} \, d\mu.
\]
Therefore $\int_{\text{failed}} \gasgnof{\pi} \, d\mu = 0$, and
\[
  \int_{\CastingSpace_A} \gasgnof{\pi} \, d\mu
  = \int_{\CastingSpace_A^{\mathrm{succ}}} \gasgnof{\pi} \, d\mu.
\]

\subsection{Completing the proof}

\subsubsection{Contribution from successful castings}

For successful castings, the ghost-adjusted sign is $\gasgnof{\pi} = +1$
(\Cref{prop:sign-identity}). Therefore
\[
  \int_{\CastingSpace_A^{\mathrm{succ}}} \gasgnof{\pi} \, d\mu
  = \mu(\CastingSpace_A^{\mathrm{succ}})
  = \PP_{\mathrm{int}}(\text{final positions} \in A),
\]
where the last equality uses the bijection between successful castings
and performances (\Cref{prop:performance-casting-bijection}). The bijection is purely
combinatorial---it depends only on the paths, not on the probability
measure---so the discrete proof applies verbatim to each
outcome~$\omega$: the lexicographic tie-break makes rehearsal
deterministic, so for \emph{every} $\omega \in \Omega$ with final
positions in~$A$ there is exactly one $\pi \in \Pi_A$ such that
$(\pi, \omega)$ is a successful casting. No exceptional set is
needed---the selection is deterministic for every outcome, including
those where two adjacent pairs meet simultaneously
(\Cref{sec:continuous-measurability}).

\subsubsection{The determinant identity}

The integral over the casting space equals
\[
  \int_{\CastingSpace_A} \gasgnof{\pi} \, d\mu
  = \sum_{\pi \in \Pi_A} \gasgnof{\pi} \, \PP\bigl( \mathbf{X}_T \in A_\pi \bigr).
\]
Combining with the cancellation of failed castings and the positivity
of successful ones gives
\[
  \PP_{\mathrm{int}}(\text{final positions} \in A)
  = \sum_{\pi \in \Pi_A} \gasgnof{\pi} \, \PP\bigl( \mathbf{X}_T \in A_\pi \bigr),
\]
which is \Cref{thm:continuous}.

\section{Integrating out the ghosts}\label{sec:determinant}

As previewed in \Cref{sec:intro-results}, integrating out the ghost
positions turns the coalescence formula (\Cref{thm:coalescence}) into a
determinant in the heir positions alone. The heirs are the surviving
particles, which we also call \emph{survivors}. The result is a
closed-form determinantal
formula---the \emph{coalescence determinant}---that, like
\Cref{thm:continuous}, is an identity between measures, valid for both
discrete and continuous state spaces. Its matrix definition and theorem
statement use only the coalescence pattern and transition kernels; the
ghost machinery enters only in the derivation.

\subsection{Transition kernels}
\label{sec:det-kernels}

Let $S$ denote the state space ($\RR$ or $\ZZ$), and fix a time
horizon $T > 0$. Write $X_T$ for the position of a single particle
at time~$T$, and let $P_x$ denote the transition kernel: the
distribution of $X_T$ when the particle starts at~$x$.
For a measurable set $B$, write $P_x(B) = \PP_x(X_T \in B)$.
Let $\nu$ denote the reference measure (Lebesgue measure on~$\RR$,
counting measure on~$\ZZ$), and write $p_x(y)$ for the density
of $P_x$ with respect to~$\nu$: the transition density (continuous
case) or transition probability (discrete case). Define the
cumulative distribution
\[
F_x(y) = P_x\bigl((-\infty, y]\bigr) = \PP_x(X_T \leq y).
\]

\subsection{The coalescence matrix}
\label{sec:det-matrix}

Fix a coalescence pattern: a composition
$c_1{+}\cdots{+}c_k = n$, where the first~$c_1$ particles
merge into survivor~$1$ at position~$y_1$, the next~$c_2$ into
survivor~$2$ at~$y_2$, and so on. The $l$th block of the
composition---the initial particles merging into
survivor~$l$---has indices $c_1{+}\cdots{+}c_{l-1}{+}1$
through $c_1{+}\cdots{+}c_l$.

\fnote{%
  prose={The ghost-free matrix $\tilde M$ is the integrated, continuous
    form; its discrete ghost analogue $M$ is \texttt{M\_V}
    (\Cref{prop:restricted-leibniz}). $\tilde M$ itself is not formalized;
    see \Cref{sec:lean-scope}.}}%
\begin{definition}[Coalescence matrix]\label{def:coalescence-matrix}
Both rows and columns of the $n \times n$ \emph{coalescence matrix}
$\tilde{M}$ are indexed by $\{1, \ldots, n\}$. The entry in
row~$i$, column~$j$ (where $j$ lies in the $l$th block, with
survivor position~$y_l$) is
\[
\tilde{M}_{ij} = \begin{cases}
p_{x_i}(y_l)
  & \text{if $j$ is the first index in its block}, \\
F_{x_i}(y_l) - [i < j]
  & \text{otherwise},
\end{cases}
\]
where $[i < j]$ denotes the Iverson bracket.
The first column of each block contains transition densities
(or probabilities); the remaining $c_l - 1$ columns contain
cumulative distributions with a ``staircase'' shift $-[i < j]$:
entries with $i < j$ are shifted by~$-1$ (the staircase region is the
same as in \Cref{fig:matrix-structure}, there shown for the ghost
matrix~$M$).
\end{definition}

\subsection{The coalescence determinant}
\label{sec:det-formula}

\fnote{%
  prose={Not formalized: the continuous, probabilistic density form (no
    measure theory in Lean). The Lean development certifies the discrete
    algebraic core this is derived from; see \Cref{sec:lean-scope}.}}%
\begin{theorem}[Coalescence determinant]\label{thm:ghost-free}
Label the survivors in increasing spatial order, and let
\[
  W_k = \{(y_1, \ldots, y_k) \in S^k : y_1 \prec \cdots \prec y_k\}
\]
be the set of strictly ordered survivor positions. For a measurable
set $A \subseteq W_k$,
\[
\PP_{\mathrm{int}}\bigl(\text{survivor positions} \in A\bigr)
= \int_A \det\bigl(\tilde{M}(y_1, \ldots, y_k)\bigr) \, d\nu^{\otimes k},
\]
where $\PP_{\mathrm{int}}$ denotes the probability for the
coalescing system (\Cref{sec:continuous}).
\end{theorem}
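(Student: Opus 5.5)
The proof starts from the continuous-time ghost formula (\Cref{thm:continuous}) and integrates out the ghost coordinates. Fix the composition $c_1{+}\cdots{+}c_k = n$, hence the role set $\Roles$ with heirs $H_1,\dots,H_k$ and ghosts $\Ghosts$. The event $\{\text{survivor positions} \in A\}$ splits, up to ties, as a disjoint union over sign vectors $\varepsilon \in \{\pm1\}^{\Ghosts}$ of the $\varepsilon$-admissible sets
\[
A^\varepsilon = \bigl\{(y_f)_{f\in\Roles} : (y_{H_1},\dots,y_{H_k}) \in A,\ y_g \preceq y_{\heir(g)} \text{ if } \varepsilon_g=+1,\ y_{\heir(g)} \prec y_g \text{ if } \varepsilon_g=-1\bigr\}.
\]
The ghosts range freely over a half-line, and ties go to $\varepsilon_g=+1$ as in \Cref{def:eps-admissible-set}. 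For each $\varepsilon$ the right-hand side of \Cref{thm:continuous} is, by independence of the non-interacting particles, an integral of a product of single-particle kernels. Equivalently, it is the integral over $A^\varepsilon$ of the restricted Leibniz sum, which \Cref{prop:restricted-leibniz} and \Cref{lem:gasgn-eval} identify with $\det M$ evaluated at the densities $p_{x_i}(y_f)$. So
\[
\PP_{\mathrm{int}}(\cdot\in A)=\sum_\varepsilon \int_{A^\varepsilon} \det M^{\varepsilon}\,d\nu^{\otimes n}.
\]
Here $M^\varepsilon$ is the matrix of \Cref{sec:formula-matrix} with density entries. Its heir columns hold $p_{x_i}(y_H)$, and column $g$ holds $-[\varepsilon_g=-1]\,p_{x_i}(y_g)$ for $i<g$ and $[\varepsilon_g=+1]\,p_{x_i}(y_g)$ for $i\ge g$.

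The key step is to perform the ghost integrals inside the determinant. Each ghost variable $y_g$ appears in exactly one column, so by multilinearity in columns (Fubini, since all entries are bounded and integrable) we may integrate column $g$ over its allowed half-line with the heir positions held fixed. Sum over $\varepsilon_g$ before integrating. For $i\ge g$ only $\varepsilon_g=+1$ contributes, and integrating over $y_g\preceq y_{\heir(g)}$ gives $F_{x_i}(y_{\heir(g)})$. For $i<g$ only $\varepsilon_g=-1$ contributes, giving $-P_{x_i}((y_{\heir(g)},\infty)) = F_{x_i}(y_{\heir(g)})-1$. Both cases are exactly $F_{x_i}(y_l)-[i<g]$, the entry of $\tilde M$ in column $g$, which sits in block $l$ at a non-first index. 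Since the sum over $\varepsilon$ factorizes column by column, after all ghost columns are integrated the sum of determinants collapses to the single determinant $\det\tilde M(y_1,\dots,y_k)$. The column order of $M$ ($\min$-order: each heir followed by its ghosts) matches the block order of $\tilde M$, so no sign correction is needed.

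Two bookkeeping points remain. First, the survivors must be labelled in spatial order, which matches label order by the heir-order condition. Strict order $y_1\prec\cdots\prec y_k$ on $A\subseteq W_k$ is harmless, because equal heir positions are null or excluded (\Cref{rem:formula-edge-cases}). Second, the integrals may be interchanged with the finite sum over $\varepsilon$. The main obstacle I expect is justifying the first step rigorously: \Cref{thm:continuous} is stated for probabilities of sets, not densities. So I would first rewrite $\sum_{\pi}\gasgnof{\pi}\,\PP(\mathbf X_T\in A^\varepsilon_\pi)$ as $\int_{A^\varepsilon}\sum_\pi \gasgnof{\pi}\prod_I p_{x_I}(y_{\pi(I)})\,d\nu^{\otimes n}$. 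That sum is the determinant $\det M^\varepsilon$, since the Iverson brackets are constant on $A^\varepsilon$ and thereby implement the restriction to $\Pi_A$. The tie convention must also be handled consistently with the $\preceq/\prec$ choice, so that the half-line integrals give exactly $F$ and $F-1$ even for atoms in the discrete case.
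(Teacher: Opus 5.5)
Your proposal is correct and follows the paper's proof. You fix $\varepsilon$, read $\det M$ as the joint density via \Cref{thm:continuous}, and use multilinearity in the ghost columns to sum over $\varepsilon_g$ and integrate $y_g$ over the sign-selected half-line, which yields $F_{x_i}(y_H)$ for $i\ge g$ and $F_{x_i}(y_H)-1$ for $i<g$. Your explicit passage from the set form of \Cref{thm:continuous} to the density form through the restricted Leibniz expansion fills in a step the paper only asserts; note that Fubini needs only the integrability of each product $\prod_I p_{x_I}(y_{\pi(I)})$, not bounded entries.
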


The restriction to~$W_k$ is the usual Karlin--McGregor caveat: the
matrix~$\tilde{M}$ is built from the ordered positions
$y_1 \prec \cdots \prec y_k$, and off the ordered chamber the
determinant need not even be nonnegative. On~$W_k$ the formula is an
identity between measures: $\det \tilde{M}$ is the Radon--Nikodym
derivative of the survivor-position distribution with respect
to~$\nu$. For continuous state spaces, $\det \tilde{M}$ is a
probability density on~$W_k$; for discrete state spaces, it is a
probability mass function. The companion
papers~\cite{Sniady2026coalescenceApplications}
and~\cite{Sniady2026pfaffian} develop applications of the
coalescence determinant.

\begin{proof}
Start from \Cref{thm:coalescence}: the total weight of performances
with a fixed sign vector $\varepsilon$ is $Z_\varepsilon = \det M$, whose
ghost columns are already specialized to~$\varepsilon$ through the
Iverson brackets $[\varepsilon_g = \pm 1]$. To
marginalize over ghost positions, integrate each ghost position
$y_g$ over the half-line on the side of its heir selected by its
sign---left, $y_g \preceq y_{\heir(g)}$, when $\varepsilon_g = +1$ and
right, $y_{\heir(g)} \prec y_g$, when $\varepsilon_g = -1$ (the
$\varepsilon$-admissible region of \Cref{def:eps-admissible-set})---and
sum over all sign vectors $\varepsilon \in \{+1,-1\}^{\Ghosts}$. For continuous state spaces this
starting identity is the density form of \Cref{thm:continuous}, with
$\det M$ the joint density of the final positions; the marginalization
below is then integration against that density.

Using multilinearity of the determinant in columns---it is linear in
each column separately, and the marginalization touches one ghost
column at a time---the summation and integration act column by
column. Heir columns contain
$W(x_i \to y_H)$ with no ghost-sign dependence, so they pass through
unchanged; under continuous state spaces, $W(x_i \to y_H)$
becomes the transition density $p_{x_i}(y_H)$.

For a ghost column~$g$ with heir $H = \heir(g)$, the junction index~$g$
separates the rows by which side of the junction each particle starts
on: those with $i \geq g$ start at or to the right of the junction,
those with $i < g$ to its left. The matrix entry is
$[\varepsilon_g=+1] \cdot W(x_i \to y_g)$ when $i \geq g$ and
$-[\varepsilon_g=-1] \cdot W(x_i \to y_g)$ when $i < g$. For a fixed sign, the matching
bracket equals~$1$ and the opposite bracket equals~$0$, so each
row keeps only its matching entry. Summing over both signs
$\varepsilon_g \in \{+1,-1\}$ and integrating over $y_g$, the
ghost column entry in row~$i$ becomes:
\begin{itemize}
\item If $i \geq g$: the entry $[\varepsilon_g=+1] \cdot W(x_i \to y_g)$
  contributes when $\varepsilon_g = +1$ (ghost left of heir,
  $y_g \leq y_H$). Integrating:
  \[\displaystyle \int_{y_g \leq y_H} p_{x_i}(y_g) \, d\nu(y_g)
  = F_{x_i}(y_H).\]
\item If $i < g$: the entry $-[\varepsilon_g=-1] \cdot W(x_i \to y_g)$
  contributes when $\varepsilon_g = -1$ (ghost right of heir,
  $y_g > y_H$). The surviving entry is $-W(x_i \to y_g)$.
  Integrating:
  \[-\!\int_{y_g > y_H} p_{x_i}(y_g) \, d\nu(y_g)
  = -(1 - F_{x_i}(y_H)) = F_{x_i}(y_H) - 1.\]
\end{itemize}
This is precisely the staircase pattern in
\Cref{def:coalescence-matrix}.
\end{proof}

\subsection{Example}
\label{sec:det-examples}

\begin{example}[Pattern $2{+}1$: three particles, first two coalesce]
\label{ex:ghost-free-3}
Continuing \Cref{sec:intro-example}: particles $1$ and $2$ coalesce
(survivor at $y_1$) while particle $3$ survives alone ($y_2$).
The composition is $2{+}1$: the first block has columns~$1$
and~$2$, the second block has column~$3$. The coalescence matrix
is:
\[
\tilde{M} = \begin{pmatrix}
p_{x_1}(y_1) & F_{x_1}(y_1) - 1 & p_{x_1}(y_2) \\
p_{x_2}(y_1) & F_{x_2}(y_1) & p_{x_2}(y_2) \\
p_{x_3}(y_1) & F_{x_3}(y_1) & p_{x_3}(y_2)
\end{pmatrix}.
\]
The columns follow the same staircase logic as the intro example
(\Cref{sec:intro-example}), with cumulative distributions replacing
transition probabilities: column~$2$ carries the shift $-[i<j]$, so
row~$1$ gives $F - 1$ and rows~$2$ and~$3$ give~$F$.

For any measurable set $A \subseteq W_2 = \{(y_1, y_2) : y_1 \prec
y_2\}$ of ordered survivor positions:
\[
\PP_{\mathrm{int}}\bigl((y_1, y_2) \in A\bigr)
= \int_A \det \tilde{M} \, d\nu^{\otimes 2}.
\]
For Brownian motion ($S = \RR$), $d\nu^{\otimes 2} = dy_1\, dy_2$ and
$\det \tilde{M}$ is a joint density on~$W_2$. For simple random walk
($S = \ZZ$), $\nu$ is counting measure and $\det \tilde{M}$ is
the probability mass at $(y_1, y_2)$.
\end{example}

\ifextendedversion\subfile{sections/appendix-lean}\fi

\section*{Acknowledgments}

We thank Theodoros Assiotis, Bal\'azs B\'ar\'any, Maciej Dołęga,
Sho Matsumoto, B\'alint T\'oth,
Oleg Zaboronski, and Karol Życzkowski
for stimulating discussions and helpful literature suggestions.

P.~\'Sniady was supported by the National Science Centre, Poland,
grant number~2025/59/B/ST1/01258.

\section*{Declaration of generative AI and AI-assisted technologies \\
  in the manuscript preparation process}

During the preparation of this work P.~\'Sniady used Claude (Anthropic)
in order to draft and edit the text and to produce the figures (TikZ
code). After using this tool, the authors reviewed and edited the
content as needed and take full responsibility for the content of the
published article.

\printbibliography

\end{document}